\title{Lifting non-ordinary cohomology classes for $\SLthree$}
\date{}
\author{Chris Williams}
\renewcommand*{\backref}[1]{}
\renewcommand*{\backrefalt}[4]{%
    \ifcase #1 (Not cited.)%
    \or        (Cited on page~#2.)%
    \else      (Cited on pages~#2.)%
    \fi}
\newtheoremstyle{defplain}
  {1.0\baselineskip\@plus.2\baselineskip\@minus.2\baselineskip}
  {1.0\baselineskip\@plus.2\baselineskip\@minus.2\baselineskip}
  {}
  {}
  {\bfseries}
  {.}
  { }
  {}
\newtheoremstyle{slplain}
  {1.0\baselineskip\@plus.2\baselineskip\@minus.2\baselineskip}
  {1.0\baselineskip\@plus.2\baselineskip\@minus.2\baselineskip}
  {\itshape}
  {}
  {\bfseries}
  {.}
  { }
  {}
\newtheoremstyle{remarkplain}
  {1.0\baselineskip\@plus.2\baselineskip\@minus.2\baselineskip}
  {1.0\baselineskip\@plus.2\baselineskip\@minus.2\baselineskip}
  {}
  {}
  {\bfseries}
  {:}
  { }
  {}
\newenvironment{mfigure}[3]{
\begin{figure}[h!]
\centering
\captionsetup{justification=centering}
\caption{#1}
\makebox[\textwidth][c]{\includegraphics[width=#2\textwidth]{#3}}
\end{figure}}
\newcommand{\Q}{\mathbb{Q}}
\newcommand{\Qp}{\mathbb{Q}_p}
\newcommand{\Z}{\mathbb{Z}}
\newcommand{\Zp}{\mathbb{Z}_p}
\newcommand{\A}{\mathbb{A}}
\newcommand{\D}{\mathbb{D}}
\newcommand{\C}{\mathbb{C}}
\newcommand{\N}{\mathbb{N}}
\newcommand{\F}{\mathbb{F}}
\newcommand{\f}{\mathcal{F}}
\newcommand{\roi}{\mathcal{O}}
\newcommand{\pri}{\mathfrak{p}}
\newcommand{\AAA}{\mathcal{A}}
\newcommand{\pribar}{\overline{\pri}}
\newcommand{\newmod}[1]{\hspace{2pt}(\mathrm{mod}\hspace{2pt}#1)}
\newcommand{\Cp}{\mathbb{C}_p}
\newcommand*{\defeq}{\mathrel{\vcenter{\baselineskip0.5ex \lineskiplimit0pt
                     \hbox{\scriptsize.}\hbox{\scriptsize.}}}%
                     =}
\newcommand{\labelrightarrow}[1]{\mathrel{\mathop{\xrightarrow{\hspace*{1cm}}}^{#1}}}
\newcommand{\Hom}{\mathrm{Hom}}
\newcommand{\SLt}{\mathrm{SL}_2}
\newcommand{\GLt}{\mathrm{GL}_2}
\newcommand{\hsp}{\hspace{12pt}}
\newcommand{\h}{\mathrm{H}}
\newcommand{\matr}{\begin{pmatrix}a&b\\c&d\end{pmatrix}}
\newcommand{\smallmatrd}[4]{\left(\begin{smallmatrix}#1 & #2\\#3 & #4\end{smallmatrix}\right)}
\newcommand{\binomc}[2]{\begin{pmatrix}#1\\#2\end{pmatrix}}
\newcommand\isorightarrow{\xrightarrow{
   \,\smash{\raisebox{-0.65ex}{\ensuremath{\scriptstyle\sim}}}\,}}
\theoremstyle{defplain}
\newtheorem{mdef}{Definition}[section]
\newtheorem*{mdef*}{Definition}
\theoremstyle{slplain}
\newtheorem{mthm}[mdef]{Theorem}
\newtheorem*{mthmnum}{Theorem}
\newtheorem{mlem}[mdef]{Lemma}
\newtheorem{mprop}[mdef]{Proposition}
\newtheorem{mcor}[mdef]{Corollary}
\newtheorem{mcla}[mdef]{Claim}
\theoremstyle{remarkplain}
\newtheorem*{mrem}{Remark}
\newtheorem*{mrems}{Remarks}
\newtheorem{mremnum}[mdef]{Remark}
\newtheorem*{mnot}{Notation}
\newtheorem{mnotnum}[mdef]{Notation}
\newcommand{\comp}[1]{\h^r(\Gamma,#1)}
\newcommand{\SLthree}{\mathrm{SL}_3}
\newcommand{\cts}{\mathrm{cts}}
\newcommand{\SLo}{\mathrm{SL}_1}
\newcommand{\threematrix}[6]{\begin{pmatrix}#1 & #2 & #3\\
0 & #4 & #5\\
0 & 0 & #6
\end{pmatrix}}
\newcommand{\disp}{\D_\lambda^P(\roi_L)}
\newcommand{\fil}{\f^N\disp}
\newcommand{\pitotal}{\threematrix{1}{0}{0}{p}{0}{p^2}}
\newcommand{\pione}{\threematrix{1}{0}{0}{p}{0}{p}}
\newcommand{\pitwo}{\threematrix{1}{0}{0}{1}{0}{p}}
\newcommand{\pr}{\mathrm{pr}}
\newcommand{\VAk}{V_k\otimes \A_k}
\newcommand{\AAk}{\A_k\widehat{\otimes}\A_k}
\newcommand{\VVkdual}{V_k^*\otimes V_k^*}
\newcommand{\VDk}{V_k^*\otimes\D_k}
\newcommand{\DDk}{\D_k\widehat{\otimes}\D_k}
\newcommand\blfootnote[1]{%
  \begingroup
  \renewcommand\thefootnote{}\footnote{#1}%
  \addtocounter{footnote}{-1}%
  \endgroup
}
\newcommand{\lb}{\\ \\}
\begin{document}

\maketitle
\blfootnote{\textup{2000} \textit{Mathematics Subject Classification}: \textup{11F75 (primary), 11F85 (secondary)}} 
\begin{abstract}
In this paper, we present a generalisation of a theorem of David and Rob Pollack. In \cite{PP09}, they give a very general argument for lifting ordinary eigenclasses (with respect to a suitable operator) in the group cohomology of certain arithmetic groups. With slightly tighter conditions, we prove the same result for non-ordinary classes. Pollack and Pollack apply their results to the case of $p$-ordinary classes in the group cohomology of congruence subgroups for $\SLthree$, constructing explicit overconvergent classes in this setting. As an application of our results, we give an extension of their results to the case of non-critical slope classes in the same setting.
\end{abstract}

\section*{Introduction}
\subsection*{Background}
Modular symbols are cohomological objects that are powerful computational and theoretical tools in the study of automorphic forms. Classical modular symbols are elements in the cohomology of a locally symmetric space with coefficients in some polynomial space, and in many cases, there are ways of viewing such elements in the group cohomology of certain arithmetic subgroups. For example, to a modular form of weight $k$ and level $\Gamma_0(N)$, one can attach an element of the group cohomology $\h^1(\Gamma_0(N),V_{k-2}(\C))$, where $V_{k-2}(\C)$ is the space of homogeneous polynomials in two variables over $\C$ of degree $k-2$. These cohomology groups are equipped with an action of the Hecke operators, and the association of a modular symbol to an automorphic form respects this action.
$\lb$
In \cite{Ste94}, Glenn Stevens developed the theory of \emph{overconvergent} modular symbols by replacing the space of polynomials with a much larger space, that of \emph{$p$-adic distributions}. There is a surjective Hecke-equivariant map from this space to the space of classical modular symbols (with $p$-adic coefficients). As a map from an infinite dimensional space to a finite dimensional space, this `specialisation map' must necessarily have infinite dimensional kernel, but in the same preprint, Stevens proved his \emph{control theorem}, which says that upon restriction to the `small slope eigenspaces', this specialisation map in fact becomes an isomorphism. This control theorem -- an analogue of Coleman's small slope classicality theorem -- has had important ramifications in number theory, being used to construct $p$-adic $L$-functions (see \cite{PS11} and \cite{PS12}) and Stark-Heegner points on elliptic curves (see \cite{Dar01} and \cite{DP06}). 
$\lb$
Such control theorems have now been proved in a variety of other cases, including -- but certainly not limited to -- for compactly supported cohomology classes attached to Hilbert modular forms by Daniel Barrera Salazar in \cite{Bar15}, for compactly supported cohomology classes attached to Bianchi modular forms in \cite{Wil17}, and for ordinary cohomology classes attached to automorphic forms for $\mathrm{SL}_3$ by David and Robert Pollack in \cite{PP09}. In the latter, Pollack and Pollack gave a very general argument for explicitly lifting group cohomology eigenclasses (of a suitable operator) in the \emph{ordinary} case, that is, when the corresponding eigenvalue is a $p$-adic unit. This general lifting theorem has been used in a variety of other settings, including in the work of Xevi Guitart and Marc Masdeu in the explicit computation of Darmon points (see \cite{GM14}).
$\lb$
Whilst control theorems do exist in wide generality -- for example, Eric Urban has proved a control theorem for quite general reductive groups in \cite{Urb11} -- these theorems are rarely constructive when we pass beyond the ordinary case. In this note, we generalise the (constructive) lifting theorem of Pollack and Pollack to non-ordinary classes. To do this, we use an idea of Matthew Greenberg in \cite{Gre07}, which the author found invaluable in developing the theory of overconvergent modular symbols over imaginary quadratic fields.
$\lb$
In the remainder of the paper, we give an application of this theorem. In particular, we give an extension of the results of Pollack and Pollack over $\SLthree$ to explicitly construct overconvergent eigenclasses in the non-critical slope case. There are subtleties in this situation that do not need to be considered in the ordinary case; in particular, whilst Pollack and Pollack lift with respect to the operator $U_p$ induced by the element
\[\pi \defeq \pitotal,\]
we instead consider the two elements
\[\pi_1 \defeq \pione, \hspace{12pt}\pi_2 \defeq\pitwo,\]
with $\pi_1\pi_2 = \pi$. These induce commuting operators $U_{p,1}$ and $U_{p,2}$ on the cohomology with $U_{p,1}U_{p,2} = U_p$. We then lift twice; once with respect to the operator $U_{p,1}$ to a module of `partially' overconvergent coefficients, then with respect to the operator $U_{p,2}$ to the module of fully overconvergent coefficients used by Pollack and Pollack. In each case, we get a notion of `non-critical slope', and by combining these two notions we get a larger range of `non-criticality' than if we had just considered the operator $U_p$. This is similar in spirit to the results of \cite{Wil17}, Section 6, where control theorems are proved for $\GLt$ over an imaginary quadratic field in which the prime $p$ splits as $\pri\pribar$. This is done by lifting first to a module of half-overconvergent coefficients with respect to $U_{\pri}$, then to a module of fully overconvergent coefficients with respect to $U_{\pribar}$. 
$\lb$
We give a very brief summary of the results in the case of $\SLthree$. First, we summarise the set-up:
\begin{mnotnum}\label{fullnot}
\begin{itemize}
\item[(i)] Let $\lambda = (k_1,k_2,0)$ be a dominant algebraic weight of the torus $T\subset \mathrm{GL}_3/\Q$, and let $\Gamma \subset \Gamma_0(p)$ be a congruence subgroup of $\SLthree$. 
\item[(ii)] Let $L/\Qp$ be a finite extension with ring of integers $\roi_L$. 
\item[(iii)]Let $V_\lambda(\roi_L)$ be the (finite-dimensional) space of classical coefficients over $\roi_L$, to be defined in Section \ref{classicalcoeffs}.
\item[(iv)] Let $V_\lambda^\star$ denote $V_\lambda$ with a twisted action, as defined in Definition \ref{twisted}.
\item[(v)] Let $\D_\lambda(\roi_L)$ be the (infinite-dimensional) space of overcovergent coefficients over $\roi_L$, to be defined in Section \ref{ovcgtcoeffs}.
\item[(vi)] Let $\rho_\lambda:\h^r(\Gamma,\D_\lambda(\roi_L)) \rightarrow \h^r(\Gamma,L_\lambda(\roi_L)$ be the specialisation map on the cohomology at $\lambda$, where $L_\lambda(\roi_L) \defeq \mathrm{Im}(\D_\lambda(\roi_L)) \subset V_\lambda^\star(L)$ is the image of specialisation on the coefficients, to be defined in Section \ref{specialisation}.
\end{itemize}
\end{mnotnum}
Then, in Theorem \ref{controltheorem}, we prove:
\begin{mthmnum}Suppose $\alpha_1,\alpha_2 \in \roi_L$ with $v_p(\alpha_1)<k_1-k_2+1$ and $v_p(\alpha_2)<k_2+1.$ Then the restriction
\[\rho_\lambda: \h^r(\Gamma,\D_\lambda(\roi_L))^{U_{p,i}=\alpha_i} \isorightarrow \h^r(\Gamma,L_\lambda(\roi_L)^{U_{p,i}=\alpha_i}\]
of the specialisation map to the simultaneous $\alpha_i$-eigenspaces of the $U_{p,i}$ operators is an isomorphism.
\end{mthmnum}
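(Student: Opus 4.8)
The plan is to reduce the two-variable control theorem to two applications of a one-variable lifting theorem -- our generalisation of the argument of \cite{PP09} -- glued through the module $\D_\lambda^P(\roi_L)$ of partially overconvergent coefficients. On coefficients the specialisation map factors $\Gamma$-equivariantly as $\D_\lambda(\roi_L)\twoheadrightarrow\D_\lambda^P(\roi_L)\twoheadrightarrow L_\lambda(\roi_L)$, and $\D_\lambda^P(\roi_L)$ and $L_\lambda(\roi_L)$ inherit commuting actions of $U_{p,1}$ and $U_{p,2}$ making both arrows Hecke-equivariant; applying $\h^r(\Gamma,-)$ gives a factorisation
\[
\rho_\lambda\colon\h^r(\Gamma,\D_\lambda(\roi_L))\xrightarrow{\ \rho_2\ }\h^r(\Gamma,\D_\lambda^P(\roi_L))\xrightarrow{\ \rho_1\ }\h^r(\Gamma,L_\lambda(\roi_L)).
\]
Here $\rho_2$ forgets overconvergence ``in the $\pi_2$-direction'' and $\rho_1$ forgets what remains, and the point of using $\pi_1,\pi_2$ in place of $\pi=\pi_1\pi_2$ is that this separates the two simple-root directions of $\SLthree$, so that $\rho_1$ is controlled by $U_{p,1}$ alone and $\rho_2$ by $U_{p,2}$ alone.

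Each of $\rho_1,\rho_2$ is the degree-$r$ map in the long exact cohomology sequence of a short exact sequence of $\roi_L[\Gamma]$-modules, $0\to N_1\to\D_\lambda^P(\roi_L)\to L_\lambda(\roi_L)\to0$ and $0\to N_2\to\D_\lambda(\roi_L)\to\D_\lambda^P(\roi_L)\to0$, in which the kernel $N_i$ carries a $\Gamma$- and $U_{p,i}$-stable decreasing filtration $N_i=\f^0N_i\supseteq\f^1N_i\supseteq\cdots$ that is separated and complete, with graded pieces among the coefficient modules of Section~\ref{ovcgtcoeffs}, and on which $U_{p,1}$ (for $i=1$) acts by $p$ raised to a power $\geq k_1-k_2+1$, while $U_{p,2}$ (for $i=2$) acts by $p$ raised to a power $\geq k_2+1$, in each case growing without bound with the filtration degree. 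Granting this, the one-variable lifting theorem applies to each step: when $v_p(\alpha_1)<k_1-k_2+1$, every class in $\h^r(\Gamma,L_\lambda(\roi_L))$ with $U_{p,1}=\alpha_1$ has a unique lift along $\rho_1$ with $U_{p,1}=\alpha_1$ (at each filtration layer the obstruction and the indeterminacy lie in cohomology on which $U_{p,1}-\alpha_1$ is invertible, and the resulting corrections are summed, convergence coming from completeness), and symmetrically $\rho_2$ is invertible on $U_{p,2}=\alpha_2$-eigenclasses when $v_p(\alpha_2)<k_2+1$. To finish, let $\phi\in\h^r(\Gamma,L_\lambda(\roi_L))$ satisfy $U_{p,i}\phi=\alpha_i\phi$ for $i=1,2$: its unique $U_{p,1}$-eigenlift $\widetilde\phi$ along $\rho_1$ is automatically a $U_{p,2}=\alpha_2$-eigenclass, since $U_{p,2}\widetilde\phi$ is a $U_{p,1}$-eigenlift of $\alpha_2\phi$ and hence equals $\alpha_2\widetilde\phi$ by uniqueness; lifting $\widetilde\phi$ along $\rho_2$ in the same way yields a two-sided inverse to $\rho_\lambda$ on the simultaneous $(\alpha_1,\alpha_2)$-eigenspace. (Equivalently: the slope bounds make the slope-$\leq v_p(\alpha_i)$ part of $\h^\bullet(\Gamma,N_i)$ vanish in every degree, and the long exact sequences then give the isomorphisms on slope decompositions, which are compatible because $U_{p,1}$ and $U_{p,2}$ commute.)

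The substantive point -- and the main obstacle -- is the computation behind the two slope bounds: one must identify the graded pieces of the kernel filtrations with the coefficient modules of Section~\ref{ovcgtcoeffs} and track how $U_{p,i}$, normalised to be optimally integral by dividing out the highest-weight factor of $\pi_i$, scales each of them, checking that the exponent of $p$ is at least $k_1-k_2+1$ for $\rho_1$ and at least $k_2+1$ for $\rho_2$. This is the two-fold $\SLthree$-analogue of the elementary computation that, for $\GLt$, $U_p$ scales the $j$-th graded piece of Stevens' moment filtration on distributions by $p^{j}$; splitting $\pi$ into $\pi_1$ and $\pi_2$ is precisely what lets these two exponents be read off independently, which is what widens the non-critical range beyond what the single operator $U_p$ would give. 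The remaining ingredients -- exactness of the factorisation of $\rho_\lambda$, compatibility of the two filtrations, the inheritance of commuting Hecke actions on $\D_\lambda^P(\roi_L)$ and $L_\lambda(\roi_L)$, and a Mittag-Leffler argument (using that $\h^\bullet(\Gamma,-)$ is computed by a finite complex of finitely generated $\roi_L$-modules) needed to carry cohomology through the inverse limit along the filtration -- are formal once the coefficient modules of Sections~\ref{classicalcoeffs}--\ref{specialisation} are available.
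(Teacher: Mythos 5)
Your global architecture coincides with the paper's: factor $\rho_\lambda$ through the partially overconvergent module $\D_\lambda^P(\roi_L)$, prove a one-operator control theorem for $\rho_\lambda^1$ with respect to $U_{p,1}$ (bound $k_1-k_2+1$, coming from the vanishing of the moments $\mu(x^ry^sz^t)$ for $r+s\leq k_1-k_2$ on $\ker(\pr_\lambda^1)$, cf.\ Proposition \ref{kernelcond}) and for $\rho_\lambda^2$ with respect to $U_{p,2}$ (bound $k_2+1$), then glue using commutativity and uniqueness of eigenlifts, exactly as Theorem \ref{controltheorem} is deduced from Propositions \ref{partial} and \ref{full}. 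The genuine gap is in the mechanism you propose for each one-operator lifting. The statement is integral: the coefficients are $\roi_L$-modules and $\alpha_i\in\roi_L$ is in general \emph{not} a unit. Your layer-by-layer correction argument needs to invert $U_{p,i}-\alpha_i$ (equivalently, to apply $\alpha_i^{-1}U_{p,i}$) on cohomology valued in the kernel filtration; over $\roi_L$ this operator is injective but typically not surjective --- already on a piece where $U$ acts through multiplication by $p^h$ with $h>v_p(\alpha_i)$, $U-\alpha_i$ multiplies by the non-unit $\alpha_i(\alpha_i^{-1}p^h-1)$. What the filtration actually gives (Lemma \ref{partb}) is that $\mu|\pi_1\in\alpha\f^{N+1}\D_\lambda^P(\roi_L)$ for $\mu\in\f^N\D_\lambda^P(\roi_L)$; but an arbitrary cochain lift of a class downstairs takes values in all of $\D_\lambda^P(\roi_L)$, where no divisibility of $U\mu$ by $\alpha$ holds, so the obstruction $(U-\alpha)\widetilde\varphi$ need not even lie in $\alpha$ times an integral cochain and your successive corrections cannot be formed. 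Handling this is the whole point of condition (v) of Notation \ref{setup} and condition (a) of Theorem \ref{liftingtheorem}: one introduces the $\Sigma$-stable submodules $\D_\lambda^{P,\alpha}(\roi_L)$ and $\D_\lambda^{\alpha}(\roi_L)$ on which $U$ is divisible by $\alpha$ (Lemma \ref{parta}), defines a substitute operator $V$ with $\alpha V=U$, and carries out the lifting cocycle-by-cocycle inside these submodules, with extra care for the possible $\alpha$-torsion of the quotients $\AAA^ND^\alpha$. None of this apparatus, which is the paper's main technical addition to \cite{PP09}, appears in your proposal.

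Your parenthetical fallback via slope decompositions does not repair this: at best it proves a statement with $L$-coefficients by non-constructive means (and the existence of slope decompositions for $\h^r(\Gamma,\D_\lambda)$ is itself not formal), which is weaker than the integral, constructive assertion being proved. Moreover, the finiteness you invoke for the limit argument --- that $\h^\bullet(\Gamma,-)$ is computed by a finite complex of finitely generated $\roi_L$-modules --- is false for these coefficients, since $\D_\lambda(\roi_L)$ and $\D_\lambda^P(\roi_L)$ are not finitely generated over $\roi_L$; the paper instead works with the identification $\lim_{\longleftarrow}\comp{\AAA^ND^\alpha}=\comp{D^\alpha}$ of equation (\ref{ila}) for the filtration quotients and builds the lift as a compatible system there. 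In short: the slope-bound computations you correctly single out as the substantive input are indeed Proposition \ref{sigmastable}, Proposition \ref{kernelcond} and Lemmas \ref{partb}, \ref{parta} (and their $\pi_2$-analogues), and your gluing step is sound, but without the $D^\alpha$ device the integral lifting at the core of each step does not go through as you describe.
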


\begin{mfigure}{\emph{Graphic showing range of lifting for fixed $k_1 = k$ and varying $k_2$ (with dotted line $v_p(\alpha_1)+v_p(\alpha_2) = k+2$)}}{1.2}{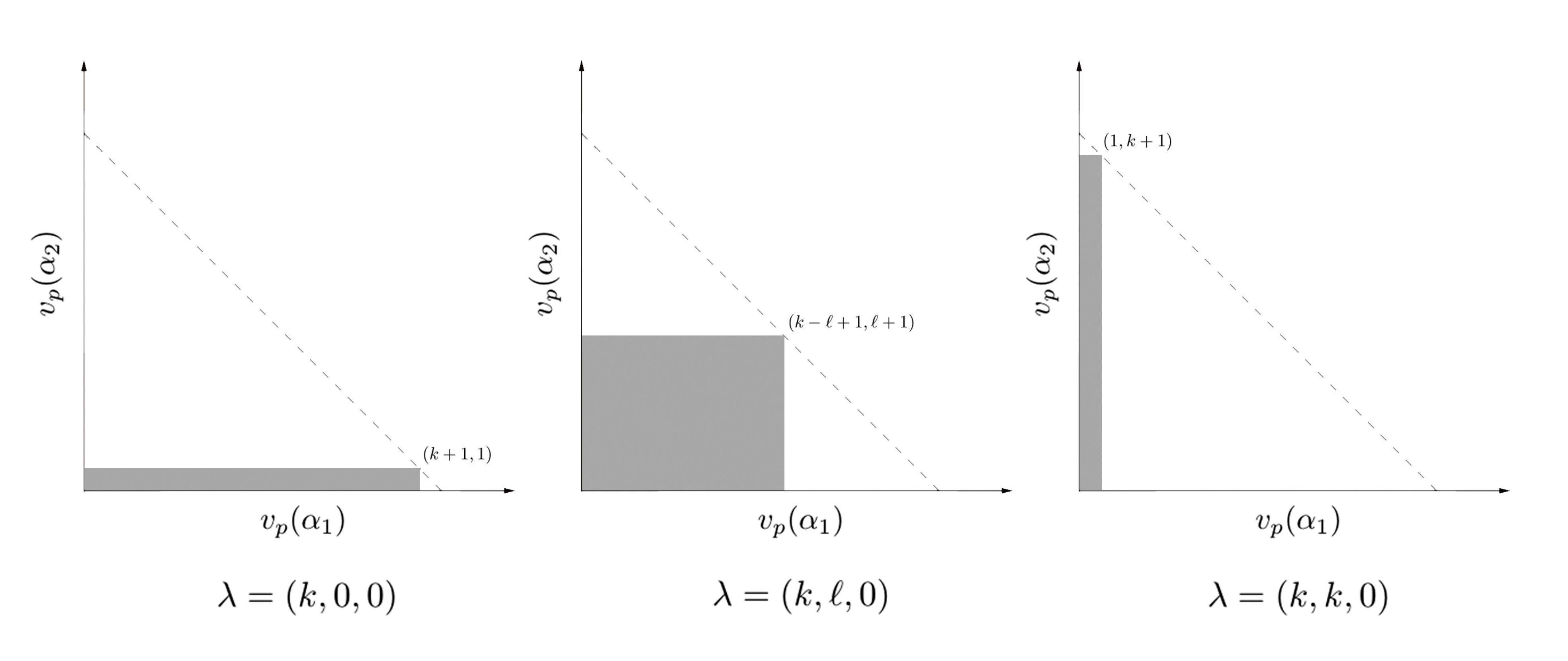}
\end{mfigure}

\subsection*{Summary of argument}
We give a brief summary of the argument we use to prove the general lifting theorem. The major component in the proof is showing that the specialisation map is surjective, in the process constructing an explicit lift of any element of the target space. Suppose we start with spaces $D$ and $V$, with actions of a group $\Gamma$ and an operator $U$, and suppose that $U$ also acts naturally on the group cohomology of these spaces. Suppose moreover that we have a surjection $\pr : D\rightarrow V$ that is equivariant with respect to the action of $\Gamma$ and $U$, inducing a map $\rho$ on the cohomology. We also assume that we can find a filtration $D \supset F^0D \supset F^1D \supset \cdots$ such that if we define $A^ND \defeq D/F^ND$, then we have $A^0D = V$. We also suppose that, among other conditions, we have $D \cong \lim_{\leftarrow}A^ND$. 
$\lb$
We then start with a $U$-eigenclass $\phi_0 \in \h^1(\Gamma,A^0D)$ with eigenvalue $\alpha$. Further assume that $\alpha$ is an algebraic integer (and hence can be thought of as living in the ring of integers of a finite extension of $\Qp$).
\begin{itemize}
\item[(i)]First suppose that $\phi_0$ is \emph{ordinary at $p$}, that is, suppose $\alpha$ is a $p$-adic unit. Then we take a cocycle $\varphi_0$ representing $\phi_0$, and lift it to \emph{any} cochain $\widetilde{\varphi_1}: \Gamma \rightarrow D$. As $\alpha $ is a unit, we can apply the operator $\alpha^{-1}U$ to this cochain. The magic is that $\varphi_1 \defeq \widetilde{\varphi_1}|\alpha^{-1}U \newmod{F^1D}$ is an $A^1D$-valued cocycle that is independent of choices, and thus defines a canonical lift of $\phi_0$ to a $U$-eigensymbol $\phi_1 \in \h^1(\Gamma,A^1D)$. Continuing in this vein, we get compatible classes $\phi_N \in \h^1(\Gamma,A^ND)$ for each $N$, and thus an eigenclass in the inverse limit $\Phi \in \h^1(\Gamma,D)$ that maps to $\phi_0$ under $\rho$.
\item[(ii)] For more general $\alpha$, we need a subtler argument. We would like to be able to apply the operator $\alpha^{-1}U$, but since $\alpha$ need not be a unit, we must strengthen our assumptions. In particular, we need the following:
\begin{itemize}
\item[(a)] A stronger condition on the filtration; namely, if $\mu\in F^ND,$ then $\mu|U\in \alpha F^{N+1}D.$
\item[(b)] An additional piece of data; namely, a $\Gamma$- and $U$-stable submodule $D^\alpha$ of $D$ such that if $\mu\in D^\alpha$, we have $\mu|U \in \alpha D$.
\end{itemize}
The benefit of this is that we can make sense of the operator $\alpha^{-1}U$ on cochains that have values in $D^\alpha$. We can run morally the same argument as above in this case. Unfortunately, the details of the argument become considerably more technical.
\end{itemize}
It is natural to ask when such conditions are satisfied. Condition (b) is relatively weak, and it seems reasonable to expect that a submodule $D^\alpha$ satisfying this condition exists in wider generality; in particular, when $D$ is a module of $p$-adic distributions on a finite number of variables, $D^\alpha$ can be defined by imposing a simple condition on the low degree moments. Condition (a), however, is stronger, and leads to the notion of \emph{small slope}. To illustrate this, consider the following examples of cases where such filtrations exist:
\begin{itemize}
\item One can find suitable filtrations in the cases of modular symbols attached to modular forms of weight $k+2$ over $\Q$ (see \cite{Gre07}). In this case, condition (a) is satisfied only if $v_p(\alpha)<k+1$, that is, if the modular form has \emph{small slope at $p$}. 
\item A similar result is given for modular forms over an imaginary quadratic field $K$ in \cite{Wil17}. In the case of weight $(k,k)$, and $p\roi_K = \pri\pribar$ split, the natural filtrations for $U_{\pri}$ and $U_{\pribar}$ satisfy condition (a) (with respect to $\alpha_{\pri}$ and $\alpha_{\pribar}$) only if $v_p(\alpha_{\pri}),v_p(\alpha_{\pribar}) < k+1.$ A more detailed description of these results is given in Section \ref{gl2}.
\end{itemize}
\subsection*{Structure}
In the first section, we describe the set-up of the theorem and the precise properties we require of our filtrations. In the second, we give a proof of our main theorem. In the third, we summarise the case of $\GLt$ over an imaginary quadratic field. In the fourth, we set up the case of $\SLthree$ by giving the relevant definitions of the various coefficient spaces and specialisation maps, and finally, in the fifth section, we define the filtrations we require in this case before stating the results for $\SLthree$.

\subsection*{Acknowledgements}
I would like to thank Marc Masdeu for encouraging me to publish the lifting theorem contained in this paper. I am also indebted to David Loeffler, as ever, for his very helpful suggestions as I worked on applying this theorem to the case of non-critical slope eigenclasses for $\SLthree$.
$\lb$
 The author was supported by an EPSRC DTG doctoral grant at the University of Warwick.

\section{Setup}
\begin{mnotnum}\label{setup}Suppose that we have:
\begin{itemize}
\item[(i)]A monoid $\Sigma$,
\item[(ii)] A group $\Gamma \leq \Sigma$, 
\item[(iii)]A ring $R$ and a right $R[\Sigma]$-module $D$,
\item[(iv)] An $R[\Sigma]$-stable filtration of $D$, given by $D \supset \f^0D \supset \f^1D \supset \cdots,$ such that if we define $\AAA^ND \defeq D/\f^ND$, then we have
\[\lim\limits_{\longleftarrow}\AAA^ND = D,\]
and where the $\f^ND$ have trivial intersection, and
\item[(v)] For some fixed $\alpha \in R$, a right $\Sigma$-stable submodule $D^\alpha$ of $D$, with $V^\alpha \defeq \mathrm{Im}(D^\alpha \rightarrow \AAA^0D).$
\end{itemize}
\end{mnotnum}
Note that for each $\gamma \in \Sigma$ such that $\Gamma$ and $\gamma^{-1}\Gamma\gamma$ are commensurable, and any $\Gamma$-module $\D$, we have an operator $U_\gamma$ on the cohomology group $\h^r(\Gamma,\D)$ defined in the usual way, that is, by the composition of the maps
\[\h^r(\Gamma,\D) \labelrightarrow{\text{res}} \h^r(\Gamma\cap\gamma^{-1}\Gamma\gamma,\D) \labelrightarrow{\gamma} \h^r(\Gamma\cap\gamma\Gamma\gamma^{-1},\D) \labelrightarrow{\text{cores}} \h^r(\Gamma,\D).\]


\begin{mthm}\label{liftingtheorem}
Suppose that $\alpha$ is a non-zero element of $R$, that $D^\alpha$ and $V^\alpha$and their corresponding cohomology groups have trivial $R$-torsion, and that for some $\pi \in \Sigma$, we have
\begin{itemize}
\item[(a)] If $\mu \in D^\alpha$, then $\mu|\pi \in \alpha D$, and
\item[(b)] If $\mu \in \f^ND,$ then $\mu|\pi \in \alpha \f^{N+1}D.$
\end{itemize}
Then the restriction of the natural map $\rho: \h^r(\Gamma,D^\alpha) \rightarrow \h^r(\Gamma,V^\alpha)$ to the $\alpha$-eigenspaces of the $U_\pi$ operator is an isomorphism.
\end{mthm}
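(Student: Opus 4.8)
The strategy is to prove separately that the restriction of $\rho$ to the $\alpha$-eigenspaces is injective and that it is surjective. Surjectivity is the heart of the matter and, exactly as in the ordinary case sketched in the introduction, it will be constructive: from an eigenclass downstairs we build an explicit compatible system of lifts up the filtration.

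\textbf{Surjectivity.} Start with $\phi_0\in\h^r(\Gamma,V^\alpha)$ satisfying $U_\pi\phi_0=\alpha\phi_0$, represent it by a cocycle valued in $V^\alpha$, and lift that cocycle to a cochain $\widetilde\varphi_0$ valued in $D^\alpha$ (possible since $D^\alpha\twoheadrightarrow V^\alpha$ by definition). Tracing through the definition of $U_\pi$ as restriction, then the action of $\pi$, then corestriction, and using that $D^\alpha$ is $\Sigma$-stable while $\alpha D$ is an $R[\Gamma]$-submodule, hypothesis (a) shows $\widetilde\varphi_0|U_\pi$ is valued in $\alpha D$, so that $\varphi_1\defeq\alpha^{-1}(\widetilde\varphi_0|U_\pi)$ is a well-defined $D$-valued cochain. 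Since $d\widetilde\varphi_0$ reduces to $d\phi_0=0$ in $V^\alpha$, it is valued in $\f^0D$, so hypothesis (b) forces $d\varphi_1=\alpha^{-1}(d\widetilde\varphi_0)|U_\pi$ to be valued in $\f^1D$; hence $\varphi_1\bmod\f^1D$ is an $\AAA^1D$-valued cocycle lifting $\phi_0$. Hypothesis (b) also shows it is independent of the chosen lift, as replacing $\widetilde\varphi_0$ by $\widetilde\varphi_0+\mu$ with $\mu$ valued in $\f^0D$ changes $\varphi_1$ by $\alpha^{-1}(\mu|U_\pi)$, valued in $\f^1D$. One then iterates: at the $N$-th stage one lifts the cocycle already built to a $D^\alpha$-valued cochain and applies $\alpha^{-1}U_\pi$, producing a compatible system of cocycles valued in the $\AAA^ND$. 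As $D=\lim_{\longleftarrow}\AAA^ND$, these glue to a $D$-valued cocycle whose class $\Phi$ maps to $\phi_0$ under $\rho$, satisfies $U_\pi\Phi=\alpha\Phi$, and — using the extra structure of $D^\alpha$ — lies in the image of $\h^r(\Gamma,D^\alpha)$.

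\textbf{Injectivity.} Suppose $\Phi\in\h^r(\Gamma,D^\alpha)$ with $U_\pi\Phi=\alpha\Phi$ and $\rho(\Phi)=0$. Write $\AAA^ND^\alpha$ for the image of $D^\alpha$ in $\AAA^ND$ and let $\Phi_N$ be the image of $\Phi$ in $\h^r(\Gamma,\AAA^ND^\alpha)$. I would show $\Phi_N=0$ for all $N$ by induction, the case $N=0$ being $\rho(\Phi)=0$. The kernel of $\AAA^ND^\alpha\twoheadrightarrow\AAA^{N-1}D^\alpha$ is $(D^\alpha\cap\f^{N-1}D)/(D^\alpha\cap\f^ND)$; it is $\Sigma$-stable, and since $\pi$ carries $\f^{N-1}D$ into $\alpha\f^ND\subseteq\f^ND$ by (b) while preserving $D^\alpha$, the operator $U_\pi$ acts as $0$ on its cohomology. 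As $\Phi_{N-1}=0$, the class $\Phi_N$ comes from this cohomology group, so $\alpha\Phi_N=U_\pi\Phi_N=0$ and the torsion-freeness hypothesis gives $\Phi_N=0$. Finally, the vanishing of all the $\Phi_N$, together with $\bigcap_N\f^ND=0$ and a Mittag--Leffler argument, forces $\Phi=0$.

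\textbf{Main obstacle.} The real difficulty is keeping the construction inside the cohomology of $D^\alpha$: the operator $\alpha^{-1}U_\pi$ is defined only on $D^\alpha$-valued cochains yet a priori returns $D$-valued cochains, so at each stage of the surjectivity argument one must re-enter $D^\alpha$ — meaning one must verify that the cocycles produced really are valued in $\AAA^ND^\alpha=\mathrm{Im}(D^\alpha\to\AAA^ND)$, and that the short exact sequences of these images behave well under both $U_\pi$ and the filtration. The second delicate point is the passage to the inverse limit in both halves of the argument: one has to control the relevant $\lim^{1}$ term, and the $R$-torsion-freeness hypotheses on $D^\alpha$, $V^\alpha$ and their cohomology are precisely what make the repeated divisions by $\alpha$ and this inverse-limit bookkeeping legitimate. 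Throughout one must also be careful to distinguish cocycles from cohomology classes, since the $U_\pi$-eigenvalue equation holds only up to a coboundary at each finite stage.
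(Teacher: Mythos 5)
Your surjectivity sketch follows the same route as the paper (iterate a formal $\alpha^{-1}U_\pi$, i.e.\ the operator $V_\pi$ defined by $x|\pi=\alpha y$, up the filtration $\f^N D$), but the step you yourself isolate as ``the real difficulty'' is precisely the content of the paper's argument, and you do not supply it. At stage $N$ you say one ``lifts the cocycle already built to a $D^\alpha$-valued cochain and applies $\alpha^{-1}U_\pi$''; this presupposes that the cocycle produced at stage $N$ takes values in $\AAA^{N}D^\alpha=\mathrm{Im}(D^\alpha\to\AAA^{N}D)$, whereas the construction only yields a cocycle $\tau_N$ with values in $\AAA^{N}D$, since hypothesis (a) places $\varphi|U_\pi$ in $\alpha D$, not in $\alpha D^\alpha$. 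The paper's resolution is a coboundary correction: because the class at stage $N-1$ is an eigensymbol, $\varphi_{N-1}=\tau_{N-1}+d(c_{N-1})$ for some cochain $c_{N-1}$; lifting $c_{N-1}$ arbitrarily and setting $\varphi_{N}\defeq\tau_{N}+d(c_{N})$ gives a cocycle valued in $\AAA^{N}D^\alpha$ representing the same class as $\tau_N$ in $\h^r(\Gamma,\AAA^{N}D)$. One must then verify that the resulting class is independent of all choices (including the choice of cocycle representing $\phi_{N-1}$, which your independence check does not cover --- you only vary the cochain lift) and that it is again a $U$-eigensymbol in the precise sense $\varepsilon_N(\phi_N)=\phi_N|V_N$, since the eigen-equation at finite level only makes sense through the pair $(\varepsilon_N,V_N)$ and only holds up to coboundary. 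Without this correction-and-verification the induction does not propagate, so as written the surjectivity argument has a genuine gap, even though you correctly located it.

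Your injectivity argument contains a step that fails under the stated hypotheses: you deduce $\Phi_N=0$ from $\alpha\Phi_N=0$ by ``the torsion-freeness hypothesis'', but torsion-freeness is assumed only for $D^\alpha$, $V^\alpha$ and their cohomology, not for $\h^r(\Gamma,\AAA^{N}D^\alpha)$ with $N\geq 1$; indeed the paper explicitly warns that $\AAA^{N}D^\alpha$ may have non-trivial $\alpha$-torsion (this is why the $\varepsilon_N$, $V_N$ formalism is introduced at all). Once you only know $\alpha\Phi_{N-1}=0$, the induction degrades (one gets $\alpha^{N}\Phi_N=0$ rather than vanishing), so the argument does not obviously repair itself along your route. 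The paper argues differently: an eigensymbol in $\ker\rho$ is, by the long exact sequence, represented by a cocycle $\varphi$ valued in $\f^0D^\alpha$; pushing into $\h^r(\Gamma,D)$ via $\varepsilon$ and iterating $V$, condition (b) shows $[\varphi]_D$ is represented for every $N$ by a cocycle valued in $\f^{N}D$, hence vanishes using $\bigcap_N\f^{N}D=0$ and $\h^r(\Gamma,D)=\lim_{\longleftarrow}\h^r(\Gamma,\AAA^{N}D)$; finally one checks that if $\varphi=d(c)$ with $c$ a $D$-valued cochain, then $c$ takes values in $D^\alpha$, so $\phi=0$ already in $\h^r(\Gamma,D^\alpha)$. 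You should either adopt this route or add the extra torsion-freeness as a hypothesis, which would change the theorem.
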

\begin{mrem}This result is very similar to Theorem 3.1 of \cite{PP09}; their conditions are slightly weaker, but their conclusion requires $\alpha$ to be a unit. In their case, they do not require the condition on trivial $R$-torsion, and then prove that there is a unique eigenlift $\Phi$ of an eigensymbol $\phi$ that has $\mathrm{Ann}_R(\Phi) = \mathrm{Ann}_R(\phi)$. For simplicity, we have imposed this condition to ensure these annihilators are trivial. In the cases we consider, these conditions are satisfied.
\end{mrem}
We have natural $\Sigma$-equivariant projection maps 
\[\pr^N: D \longrightarrow \mathcal{A}^ND\]that induce $\Sigma$-equivariant maps 
\[\rho^N: \comp{D} \longrightarrow \comp{\mathcal{A}^ND},\]
(and hence $\rho\defeq \rho^0: \comp{D^\alpha} \rightarrow \comp{V^\alpha}$ by restriction) as well as maps $\pr^{M,N}:\mathcal{A}^MD \rightarrow \mathcal{A}^ND$ for $M \geq N$ that similarly induce maps $\rho^{M,N}.$ Thus we have an inverse system, and we have
\begin{align*}
\lim\limits_{\longleftarrow}\comp{\mathcal{A}^ND} = \comp{D}.
\end{align*}

First we pass to a filtration where the $\Sigma$-action is nicer. Define $\mathcal{F}^ND^\alpha = \mathcal{F}^ND \cap D^\alpha.$ This is a $\Sigma$-stable filtration of $D^\alpha$, since $D^\alpha$ is $\Sigma$-stable. It's immediate that if $\mu \in \mathcal{F}^ND^\alpha,$ then $\mu|\pi \in \alpha\mathcal{F}^{N+1}D^\alpha.$ Define $\mathcal{A}^ND^\alpha = D^\alpha/\mathcal{F}^ND^\alpha$, so that we have the following (where the vertical maps are injections):
\begin{diagram}
 D&&\rTo^{\pi^M}&&\mathcal{A}^MD&&\rTo^{\pi^{M,N}}&&\mathcal{A}^ND \\
\uTo&&&&\uTo&&&&\uTo \\
 D^\alpha &&\rTo^{\pi^M}&&\mathcal{A}^MD^\alpha &&\rTo^{\pi^{M,N}}&&\mathcal{A}^ND^\alpha.
\end{diagram}
Again, we see that
\begin{align}
\label{ila}\lim\limits_{\longleftarrow}\comp{\mathcal{A}^ND^\alpha} = \comp{D^\alpha}.
\end{align}

\begin{mnot}(The $U$ operator at the level of cochains). In \cite{PP09}, a description of the $U = U_\pi$ operator at the level of cochains is given. In particular, they take an explicit free resolution 
\[\cdots \labelrightarrow{\delta_3} F_2 \labelrightarrow{\delta_2} F_1 \labelrightarrow{\delta_1} F_0 \labelrightarrow{d_0} \Z \longrightarrow 0\]
of $\Z$ by right $\Z[\Gamma]$-modules; then, for a right $\Z[\Gamma]$-module $\D$, they use this to explicitly write down the spaces $C^r(\Gamma,\D) \defeq \mathrm{Hom}_\Gamma(F_r,\D)$ of cochains, $Z^r(\Gamma,\D) \defeq \mathrm{Ker}(d_{r}:C^r(\Gamma,\D) \rightarrow C^{r+1}(\Gamma,\D)$ of cocycles, and $B^r(\Gamma,\D) \defeq d_{r-1}(\Gamma,\D)$ of coboundaries, where $d_r$ is the obvious map induced by $\delta_r$. Then the group cohomology is defined as $\comp{\D} \defeq Z^r(\Gamma,\D)/B^r(\Gamma,\D)$.\\
\\
Now, $F_{*}^{\pi} \rightarrow \Z \rightarrow 0$ is a free resolution of $\Z[\pi^{-1}\Gamma \pi]$-modules, and as $\F_{*} \rightarrow \Z\rightarrow 0$ is also a free resolution of $\Z[\pi^{-1}\Gamma\pi]$-modules, there is a $\Z[\pi^{-1}\Gamma\pi]$-complex map $\tau_{*}$ from $F_{*}$ to $F_{*}^{\pi}$ lifting the identity map on $\Z$.\\
\\
Pick a set $\{\pi_i\}$ of coset representatives for $\Gamma\pi$ in $\Gamma\pi\Gamma$, noting that this is finite by commensurability. Then define $U:\mathrm{Hom}(F_r,D) \rightarrow \mathrm{Hom}(F_r,D)$ at the level of cochains by
\[(\varphi|U)(f_r) \defeq \sum_{i}\varphi(\tau_r(f_r\cdot\pi_i^{-1}))\cdot\pi\pi_i,\hsp \varphi \in \mathrm{Hom}(F_r,D), f_r\in F_r.\]
Pollack and Pollack prove (in Lemma 3.2) that this induces a map of chain complexes and hence a map of cohomology groups. In fact, this map is nothing other than $U_\pi$ as defined above.
\end{mnot}

\begin{mdef*}($U$-eigensymbols of eigenvalue $\alpha$). Since $\mathcal{A}^ND^\alpha$ may have non-trivial $\alpha$-torsion, we should make the statement ``$U_\pi$-eigensymbol in $\comp{\mathcal{A}^ND^\alpha}$'' more precise. By condition (a) of \ref{liftingtheorem}, if $\mu \in D^\alpha$, then $\mu|\pi \in \alpha D$. We can thus consider $\pi$ as a map from $D^\alpha$ to $D$ in a natural way, and define another map $V_\pi$ from $D^\alpha$ to $D$ by setting
\[x|V_\pi = y, \hsp \text{ where }x|\pi = \alpha y.\]
We see we have a formal equality of maps $\alpha V_\pi = \pi$ from $D^\alpha$ to $D$. Thus we get an operator
\[V\defeq V_\pi:\comp{D^\alpha} \longrightarrow \comp{D}\]
on the cohomology, so that we have an equality of operators $\alpha V = U$ as operators on $\comp{D^\alpha}$. There is also a canonical operator 
\[\varepsilon: \comp{D^\alpha} \longrightarrow \comp{D}\]
induced by the inclusion $D^\alpha \rightarrow D$. We see that if $\phi \in \comp{D^\alpha}$ satsifies $\phi|U = \alpha\phi$, then $\varepsilon(\phi) = \phi|V$ as elements of $\comp{D}$.

\begin{mrem}
The reason we don't simply just define $V = \alpha^{-1}U_\pi$ is that `dividing by $\alpha$' is not in general a well-defined notion on $D$. 
\end{mrem}
It is easy to see that for each $N$, $V$ gives rise to an operator $V_N : \mathcal{A}^ND^\alpha \rightarrow \mathcal{A}^ND$. Denote the canonical map $\comp{\mathcal{A}^ND^\alpha} \rightarrow \comp{\mathcal{A}^ND}$ by $\varepsilon_N$. We say an element $\varphi_N \in \comp{\mathcal{A}^ND^\alpha}$ is a \emph{$U$-eigensymbol of eigenvalue $\alpha$} if $\varepsilon_N(\varphi_N) = \varphi_N|V_N$ as elements of $\comp{\mathcal{A}^ND}$. Henceforth, when we talk about $U$-eigensymbols, it shall be assumed that the eigenvalue is $\alpha$.
\end{mdef*}

%
%
%
%

\section{Proof of Theorem \ref{liftingtheorem}}
\begin{proof}(Theorem \ref{liftingtheorem}). We first prove surjectivity. Take a $U$-eigensymbol $\phi_0$ of eigenvalue $\alpha$ in $\comp{V^\alpha} = \comp{\AAA^0D^\alpha}$. Suppose there exists a lift $\phi_N \in \comp{\AAA^{N+1}D^\alpha}$ of $\phi_0$ to a $U$-eigensymbol for some $N$. We prove that we can canonically lift $\phi_N$ to some $\phi_{N+1}$, and thus we will be done by induction and equation (\ref{ila}), as we have constructed an element in the inverse limit. We prove this in a series of claims.\\
\\
Take a cocycle $\varphi_N$ representing $\phi_N$, and lift it to a cochain $\varphi \in C_{\Gamma}(F_r,D^\alpha)$. We apply $V$ at the level of cochains, obtaining a cochain $\varphi|V:F_n\rightarrow D$. Define a cochain 
\[\tau_{N+1}:F_n \longrightarrow \AAA^{N+1}D\]
by composing this with the reduction map. This is in fact a cocycle; as $\varphi_N$ is a cocycle, $d\varphi$ takes values in $\f^ND^\alpha$, and thus as we have $d(\varphi|V) = (d\varphi)|V$ taking values in $\f^{N+1}D$ (by properties of $V$), it follows that $d\tau_{N+1} = 0$. Thus $\tau_{N+1}$ represents some cohomology class $[\tau_{N+1}]_D \in \comp{\AAA^{N+1}D}.$

\begin{mcla}\label{tauind}The cohomology class $[\tau_{N+1}]_D$ is independent of choices.
\end{mcla}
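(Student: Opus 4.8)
The plan is to show that the cohomology class $[\tau_{N+1}]_D$ does not depend on the two choices made in its construction: first, the choice of cocycle $\varphi_N$ representing $\phi_N$, and second, the choice of cochain lift $\varphi \in C_\Gamma(F_r, D^\alpha)$ of $\varphi_N$ to $D^\alpha$-valued cochains. I would treat these one at a time and then combine them.

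First I would fix the representative $\varphi_N$ and vary only the lift $\varphi$. Suppose $\varphi' \in C_\Gamma(F_r, D^\alpha)$ is another cochain reducing to $\varphi_N$ modulo $\f^N D^\alpha$. Then $\psi \defeq \varphi - \varphi'$ is a cochain with values in $\f^N D^\alpha$. Applying $V$ at the level of cochains, $\psi|V$ takes values in $\f^{N+1}D$ by the definition of $V$ (which encodes condition (b): if $\mu \in \f^N D^\alpha$ then $\mu|\pi \in \alpha\f^{N+1}D$, so $\mu|V \in \f^{N+1}D$). Hence $(\varphi|V) - (\varphi'|V)$ already vanishes modulo $\f^{N+1}D$, so the two cochains $\tau_{N+1}$ and $\tau_{N+1}'$ obtained by reduction are literally equal as cochains into $\AAA^{N+1}D$ — not just cohomologous. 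So there is genuinely no dependence on the lift $\varphi$.

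Next I would fix a choice of lift and vary the cocycle representative. Let $\varphi_N' = \varphi_N + d\eta$ for some $\eta \in C_\Gamma(F_{r-1}, \AAA^N D^\alpha)$, and lift $\eta$ to some $\widetilde{\eta} \in C_\Gamma(F_{r-1}, D^\alpha)$. Then $\varphi + d\widetilde{\eta}$ is a lift of $\varphi_N'$ to $D^\alpha$, and by the previous paragraph any lift of $\varphi_N'$ gives the same $\tau_{N+1}'$, so I may compute with this one. Now $(\varphi + d\widetilde{\eta})|V = \varphi|V + (d\widetilde{\eta})|V = \varphi|V + d(\widetilde{\eta}|V)$, using that $V$ commutes with $d$ on cochains (this is the chain-map property of $U = \alpha V$ recorded in the Notation block, transported through the formal equality $\alpha V = \pi$ as maps $D^\alpha \to D$). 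Reducing modulo $\f^{N+1}D$, we get $\tau_{N+1}' = \tau_{N+1} + d(\overline{\widetilde{\eta}|V})$ where $\overline{\widetilde{\eta}|V}$ is the image of $\widetilde{\eta}|V$ in $C_\Gamma(F_{r-1}, \AAA^{N+1}D)$. Hence $\tau_{N+1}$ and $\tau_{N+1}'$ differ by a coboundary, so $[\tau_{N+1}]_D$ is unchanged.

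I expect the main subtlety — though it is more a matter of care than of depth — to be the bookkeeping around the two-step reduction and the fact that $V$ is defined as a map $D^\alpha \to D$ (landing outside $D^\alpha$ in general), so one must check that every cochain to which $V$ is applied genuinely takes values in $D^\alpha$, and that the chain-map identity $d \circ V = V \circ d$ and the containment $\f^N D^\alpha|V \subseteq \f^{N+1}D$ are both being invoked on cochains with the right target. Once the first paragraph (independence of the lift, giving equality on the nose) is in place, the second paragraph's coboundary computation is routine. Combining the two paragraphs — first replace $\varphi_N'$'s arbitrary lift by the convenient one $\varphi + d\widetilde{\eta}$, then run the coboundary argument — completes the proof that $[\tau_{N+1}]_D$ depends only on $\phi_N$.
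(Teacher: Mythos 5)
Your proof is correct and follows essentially the same route as the paper's: the difference of two chosen lifts is, after absorbing a coboundary lifted from $\AAA^N D^\alpha$ into $D^\alpha$, a cochain valued in $\f^N D^\alpha$, so applying $V$ (condition (b) read off from the explicit cochain-level action) and reducing mod $\f^{N+1}D$ changes $\tau_{N+1}$ only by a coboundary. Your two-step organisation (first vary the lift, then vary the cocycle representative) merely makes explicit the coboundary bookkeeping that the paper compresses into a single computation with the class $[\varphi-\widetilde{\varphi}]_{D^\alpha}$.
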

\begin{proof}
Suppose we take a different cochain $\widetilde{\varphi}$ lifting a different cocycle $\widetilde{\varphi_N}$ to a cochain taking values in $D^\alpha$. Then $[\rho^N(\varphi - \widetilde{\varphi})]_{D^\alpha} = 0$, where $\rho^N$ is the natural reduction map, as $\varphi_N$ and $\widetilde{\varphi_N}$ both represent $\phi_N$. Thus $[\varphi - \widetilde{\varphi}]_{D^\alpha} \in \comp{D^\alpha}$ is represented by a cocycle $\psi$ taking values in $\f^ND^\alpha$. Therefore $[\varphi - \widetilde{\varphi}]_{D^\alpha}|V$ is represented by $\psi|V$, which by examining the explicit action of $U$ on cochains we see to take values in $\f^{N+1}D$. After reduction (mod $\f^{N+1}D$), we see that 
\[[\tau_{N+1}]_D - [\rho^{N+1}(\widetilde{\varphi}|V)]_D = [\rho^{N+1}(\psi|V)]_D = 0,\]
which is the result.
\end{proof}

\begin{mcla}There exists a cocycle representing $[\tau_{N+1}]_D$ taking values in the smaller space $\AAA^{N+1}D^\alpha$.
\end{mcla}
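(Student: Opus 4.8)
The plan is to recognise the cochain $\tau_{N+1}$ as nothing other than $\varphi_N|V_{N+1}$, and then to read off the claim directly from the $U$-eigensymbol property of $\phi_N$. First I would unwind the construction at the level of the explicit complex $C^r(\Gamma,-)=\mathrm{Hom}_\Gamma(F_r,-)$. By construction $\varphi_N=\pr^{N+1}\circ\varphi$ and $\tau_{N+1}=\pr^{N+1}\circ(\varphi|V)$, where $\pr^{N+1}$ denotes the relevant reduction map (on $D^\alpha$ in the first case, on $D$ in the second). The cochain-level operator $V$ is assembled from the complex map $\tau_r$, the action of the finitely many coset representatives $\pi_i\in\Gamma$, and the map $V_\pi\colon D^\alpha\to D$; all three commute with the reduction maps --- the first two because $\pr^{N+1}$ is $\Gamma$-equivariant, and $V_\pi$ because, by conditions (a) and (b) of Theorem \ref{liftingtheorem}, it carries $\f^{N+1}D^\alpha$ into $\f^{N+1}D$, which is exactly what makes $V_{N+1}$ the map induced by $V_\pi$ on the quotients $\AAA^{N+1}$. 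Pushing $\pr^{N+1}$ through the defining sum for $(\varphi|V)(f_r)$ term by term then gives $\tau_{N+1}(f_r)=(\varphi_N|V_{N+1})(f_r)$ for all $f_r\in F_r$, so $\tau_{N+1}=\varphi_N|V_{N+1}$ as cochains and hence $[\tau_{N+1}]_D=\phi_N|V_{N+1}$ in $\comp{\AAA^{N+1}D}$.

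Next I would invoke that $\phi_N$ is a $U$-eigensymbol of eigenvalue $\alpha$: by definition this means precisely that $\varepsilon_{N+1}(\phi_N)=\phi_N|V_{N+1}$ in $\comp{\AAA^{N+1}D}$, where $\varepsilon_{N+1}$ is the map induced by the inclusion $\AAA^{N+1}D^\alpha\hookrightarrow\AAA^{N+1}D$. Combining with the previous paragraph, $[\tau_{N+1}]_D=\varepsilon_{N+1}(\phi_N)$; and since $\varepsilon_{N+1}$ is induced by that inclusion, this class is represented by the cocycle obtained from any representative of $\phi_N$ --- for instance $\varphi_N$ itself --- by composing with $\AAA^{N+1}D^\alpha\hookrightarrow\AAA^{N+1}D$. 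This cocycle takes values in the (injective) image of $\AAA^{N+1}D^\alpha$, which is the assertion.

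The only step with genuine content is the identification $\tau_{N+1}=\varphi_N|V_{N+1}$ in the first paragraph: everything there is bookkeeping with the explicit cochain formula for $U$ (hence $V$) from \cite{PP09}, checking compatibility with the reduction maps and confirming that $V_{N+1}$ really is the map induced on $\AAA^{N+1}$-quotients, which rests on condition (b) shifting the filtration degree by one under $V_\pi$. I expect no other difficulty: once this compatibility is in hand the claim is immediate, being essentially a restatement of the eigensymbol property of $\phi_N$.
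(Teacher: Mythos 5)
There is a genuine gap here, caused by a confusion of filtration levels. In the induction, $\phi_N$ lives in $\comp{\AAA^{N}D^\alpha}$ (the ``$N+1$'' in the paper's opening sentence of the surjectivity argument is a slip; the rest of the proof, e.g.\ ``$d\varphi$ takes values in $\f^ND^\alpha$'' and ``$\varphi_N\in Z^r(\Gamma,\AAA^ND^\alpha)$'', fixes the intended indexing), so $\varphi_N=\pr^{N}\circ\varphi$, not $\pr^{N+1}\circ\varphi$, and $\tau_{N+1}=\pr^{N+1}\circ(\varphi|V)$ is a genuinely new object one level up. Your identification $\tau_{N+1}=\varphi_N|V_{N+1}$ does not typecheck: $\varphi_N$ takes values in $\AAA^ND^\alpha$ while $V_{N+1}$ is defined on $\AAA^{N+1}D^\alpha$-valued objects, and likewise $\varepsilon_{N+1}(\phi_N)$ is not defined. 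The eigensymbol hypothesis on $\phi_N$ is an identity at level $N$ only, namely $\varepsilon_N(\phi_N)=\phi_N|V_N$ in $\comp{\AAA^ND}$; it says nothing directly in $\comp{\AAA^{N+1}D}$. Indeed, if $\phi_N$ were already an eigenclass at level $N+1$, as your argument implicitly assumes, the claim and the entire inductive lifting would be vacuous, since the object being constructed would already be part of the hypothesis.

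What the claim actually requires, and what your argument omits, is the passage from level $N$ to level $N+1$. From the eigensymbol property one only gets that $\varphi_N$ and $\tau_N\defeq\pr^{N}\circ(\varphi|V)$ are cohomologous in $C^r(\Gamma,\AAA^ND)$, say $\varphi_N=\tau_N+b_N$ with $b_N=d(c_N)$; one must then lift $c_N$ arbitrarily to $c_{N+1}\in C^{r-1}(\Gamma,\AAA^{N+1}D)$, set $b_{N+1}=d(c_{N+1})$, and argue that the corrected cocycle $\varphi_{N+1}\defeq\tau_{N+1}+b_{N+1}$, whose reduction to level $N$ is $\varphi_N$, takes values in $\AAA^{N+1}D^\alpha$. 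This coboundary-lifting step is the whole content of the claim and cannot be bypassed. Your first paragraph does contain a correct and useful piece of bookkeeping --- reduction commutes with $V$, so that $\tau_N=\varphi_N|V_N$ and hence $[\tau_N]=\varepsilon_N(\phi_N)$ --- but with the correct indexing this is the starting point of the paper's argument, not its conclusion.
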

\begin{proof}As $\phi_N$ is a $U$-eigensymbol, we know that as cocycles, $\varphi_N$ and $\tau_{N} \defeq \rho^N(\varphi|V)$ determine the same cohomology class in $\comp{\AAA^ND}$. Thus there exists some coboundary $b_N \in B^r(\Gamma,\AAA^ND)$ such that $\varphi_N = \tau_N + b_N$. Then by definition $b_N = d(c_N)$ for some $c_N \in C^{r-1}(\Gamma,\AAA^ND)$. Lift $c_N$ arbitrarily to a cochain $c_{N+1} \in C^{r-1}(\Gamma,\AAA^{N+1})$, and define $b_{N+1} \defeq d(c_{N+1}) \in B^r(\Gamma,\AAA^{N+1}D)$. Then 
\[\rho^{N+1,N}(\tau_{N+1} + b_{N+1}) = \tau_N + b_N = \varphi_N \in Z^r(\Gamma,\AAA^ND^\alpha).\]
Therefore it follows that $\varphi_{N+1} \defeq \tau_{N+1} + b_{N+1}$ takes values in the smaller space $\AAA^{N+1}D^\alpha$. As $\tau_{N+1}+b_{N+1} \in Z^r(\Gamma,\AAA^{N+1}D)$, it follows that $\varphi_{N+1} \in Z^r(\Gamma,D^\alpha)$. Thus $\varphi_{N+1}$ is the required cocycle to prove the claim.

\end{proof}

Define $\phi_{N+1} \defeq [\varphi_{N+1}]_{D^\alpha} \in \comp{\AAA^{N+1}D^\alpha}$ to be the $\AAA^{N+1}D^\alpha$-valued cohomology class determined by $\varphi_{N+1}$.

\begin{mcla}The cohomology class $\phi_{N+1}$ is independent of all choices.
\end{mcla}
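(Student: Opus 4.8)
Proof proposal for Claim 2.13 (the statement that $\phi_{N+1}$ is independent of all choices).

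My plan is to show that the cohomology class $\phi_{N+1} = [\varphi_{N+1}]_{D^\alpha}$ in $\h^r(\Gamma, \AAA^{N+1}D^\alpha)$ does not depend on (i) the choice of representing cocycle $\varphi_N$ for $\phi_N$, (ii) the choice of cochain lift $\varphi \in C_\Gamma(F_r, D^\alpha)$ of $\varphi_N$, nor (iii) the choice of the coboundary data $c_N$ and its lift $c_{N+1}$ used to correct $\tau_{N+1}$ into a cocycle valued in the smaller module. The key structural observation is that the image of $\phi_{N+1}$ under the map $\varepsilon_{N+1}\colon \h^r(\Gamma,\AAA^{N+1}D^\alpha) \to \h^r(\Gamma,\AAA^{N+1}D)$ is precisely $[\tau_{N+1}]_D$, which by Claim~\ref{tauind} is already known to be independent of choices. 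So it suffices to show that $\varepsilon_{N+1}$ is injective on the relevant classes; this is where the trivial $R$-torsion hypothesis on $D^\alpha$, $V^\alpha$ and their cohomology enters.

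First I would reduce everything to comparing two candidate cocycles $\varphi_{N+1}$ and $\varphi_{N+1}'$ (built from possibly different choices) that both lie in $Z^r(\Gamma,\AAA^{N+1}D^\alpha)$ and both map to cocycles cohomologous to $\tau_{N+1}$ in $\AAA^{N+1}D$. Their difference $\psi \defeq \varphi_{N+1} - \varphi_{N+1}'$ is then a cocycle valued in $\AAA^{N+1}D^\alpha$ whose image in $\AAA^{N+1}D$ is a coboundary $d(e)$ for some $e \in C^{r-1}(\Gamma,\AAA^{N+1}D)$. Since both correction steps were designed so that $\rho^{N+1,N}$ sends these cocycles back to the fixed $\varphi_N$, the difference $\psi$ actually takes values in $\f^N D^\alpha/\f^{N+1}D^\alpha$, i.e.\ in the kernel of $\rho^{N+1,N}$ restricted to $D^\alpha$. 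The point is now to upgrade ``$\psi$ bounds in $\AAA^{N+1}D$'' to ``$\psi$ bounds in $\AAA^{N+1}D^\alpha$''.

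For this I would exploit the $U$-eigensymbol structure: applying $V$ (or rather the relation $\alpha V = U = \alpha V$ together with condition (b)) multiplies $\psi$ into a deeper layer of the filtration, and iterating, one gets that $\alpha^{\,j}$ times the class $[\psi]$ — interpreted suitably — becomes zero, which by the trivial $R$-torsion hypothesis forces $[\psi] = 0$ in $\h^r(\Gamma,\AAA^{N+1}D^\alpha)$. Concretely: the obstruction to $\psi$ bounding inside the smaller module lives in the cokernel of $\h^{r-1}(\Gamma,\AAA^{N+1}D^\alpha) \to \h^{r-1}(\Gamma,\AAA^{N+1}D)$, or equivalently in an $\h^r$-group attached to the quotient $\AAA^{N+1}D/\AAA^{N+1}D^\alpha$; condition (a) says this quotient is killed by a map that factors through multiplication by $\alpha$, so the obstruction class is $\alpha$-torsion, hence zero. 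Translating back, $\varphi_{N+1}$ and $\varphi_{N+1}'$ are cohomologous already over $D^\alpha$, giving $\phi_{N+1} = \phi_{N+1}'$.

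The main obstacle I anticipate is bookkeeping the interplay between the two modules $D^\alpha \subset D$ and the filtration: one must be careful that every cochain manipulation (lifting $c_N$ to $c_{N+1}$, applying $V$ at cochain level, reducing mod $\f^{N+1}$) is compatible with staying inside, or controllably leaving, $D^\alpha$, and that the $R$-torsion-freeness is invoked on exactly the right cohomology group. A clean way to organize this is to package the independence as the statement that $\varepsilon_{N+1}$ is injective on the subspace of $\h^r(\Gamma,\AAA^{N+1}D^\alpha)$ consisting of lifts of $\phi_N$, and prove that injectivity once and for all using the long exact sequence for $0 \to \AAA^{N+1}D^\alpha \to \AAA^{N+1}D \to Q \to 0$ together with condition (a) and the torsion hypothesis; then Claim~\ref{tauind} finishes it immediately.
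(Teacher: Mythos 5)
Your strategy reduces the claim to the injectivity of $\varepsilon_{N+1}\colon \h^r(\Gamma,\AAA^{N+1}D^\alpha)\to\h^r(\Gamma,\AAA^{N+1}D)$ (on the relevant classes) and then quotes Claim~\ref{tauind}; the gap is in how you propose to get that injectivity. Condition (a) of Theorem~\ref{liftingtheorem} states that $\mu|\pi\in\alpha D$ for $\mu\in D^\alpha$; it says nothing at all about the quotient $Q=\AAA^{N+1}D/\AAA^{N+1}D^\alpha$, so the assertion that ``condition (a) says this quotient is killed by a map that factors through multiplication by $\alpha$'' is a misreading (in the concrete modules of Sections 3 and 5 one does have $\alpha D\subset D^\alpha$, but that is a feature of those examples, not a hypothesis of the abstract theorem). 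More seriously, even if the obstruction class in $\h^{r-1}(\Gamma,Q)$, or the difference class in $\h^r(\Gamma,\AAA^{N+1}D^\alpha)$, were known to be $\alpha$-torsion, you could not conclude that it vanishes: the torsion-freeness hypothesis of the theorem concerns $D^\alpha$, $V^\alpha$ and \emph{their} cohomology only, and the paper explicitly warns, when defining $U$-eigensymbols, that the finite-level quotients $\AAA^ND^\alpha$ may have non-trivial $\alpha$-torsion. So injectivity of $\varepsilon_{N+1}$ is precisely the kind of statement that is unavailable at finite level, and your argument only shows that the difference of the two candidate classes is killed by a power of $\alpha$. The variant via iterating $V$ suffers from the same problem, plus a circularity risk: the eigensymbol property you would iterate at level $N+1$ is part of what is being established for $\phi_{N+1}$.

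The paper's proof avoids any appeal to injectivity of $\varepsilon_{N+1}$ by working directly with the explicit cochains: if $\widetilde{c_N}$ is another preimage of $b_N$ under $d$, with lift $\widetilde{c_{N+1}}$ and resulting cocycle $\widetilde{\varphi_{N+1}}$, then one has the exact identity
\[\varphi_{N+1}-\widetilde{\varphi_{N+1}} \;=\; b_{N+1}-\widetilde{b_{N+1}} \;=\; d\bigl(c_{N+1}-\widetilde{c_{N+1}}\bigr),\]
and since the left-hand side takes values in $\AAA^{N+1}D^\alpha$, the cochain $c_{N+1}-\widetilde{c_{N+1}}$ does too, so the difference is a coboundary already in $B^r(\Gamma,\AAA^{N+1}D^\alpha)$ and $[\varphi_{N+1}]_{D^\alpha}=[\widetilde{\varphi_{N+1}}]_{D^\alpha}$; the remaining dependence (on $\varphi_N$ and on the lift $\varphi$) is then controlled by Claim~\ref{tauind}. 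If you wanted to push your more structural route through, you would need extra hypotheses (for instance $\alpha D\subset D^\alpha$ together with torsion-freeness of $\h^r(\Gamma,\AAA^{N+1}D^\alpha)$, or of $\h^{r-1}(\Gamma,Q)$) which the theorem deliberately does not assume; as written, the proposal does not prove the claim.
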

\begin{proof}
Suppose we choose a different preimage $\widetilde{c_N}$ of $b_N$ under $d$, leading to a different $\widetilde{c_{N+1}}$ and $\widetilde{b_{N+1}}$, and thus a different $\widetilde{\varphi_{N+1}}$. Then 
\[\varphi_{N+1} - \widetilde{\varphi_{N+1}} = b_{N+1} - \widetilde{b_{N+1}} = d(c_{N+1} - \widetilde{c_{N+1}}).\]
As $\varphi_{N+1} -\widetilde{\varphi_{N+1}}$ takes values in $\AAA^{N+1}D^\alpha$, so must $c_{N+1} - \widetilde{c_{N+1}}$; hence $b_{N+1} - \widetilde{b_{N+1}} \in B^r(\Gamma,\AAA^{N+1}D^\alpha)$, so that 
\[[\varphi_{N+1}]_{D^\alpha} = [\widetilde{\varphi_{N+1}}]_{D^\alpha} \in \comp{\AAA^{N+1}D^\alpha}.\]
Thus they also determine the same cohomology class, namely $[\tau_{N+1}]_D$, in $\comp{\AAA^{N+1}D}.$ As the cohomology class $[\tau_{N+1}]_D$ is also uniquely determined by Claim \ref{tauind}, we're done. 
\end{proof}

\begin{mcla}$\phi_{N+1}$ is a $U$-eigensymbol with eigenvalue $\alpha$.
\end{mcla}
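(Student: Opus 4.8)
The plan is to verify the defining condition for a $U$-eigensymbol directly, i.e.\ to show that $\varepsilon_{N+1}(\phi_{N+1}) = \phi_{N+1}|V_{N+1}$ as classes in $\comp{\AAA^{N+1}D}$. I would begin from the cocycle $\varphi_{N+1} = \tau_{N+1} + b_{N+1}$ representing $\phi_{N+1}$, where $\tau_{N+1} = \rho^{N+1}(\varphi|V)$ for the chosen lift $\varphi \in C^r(\Gamma,D^\alpha)$ of $\varphi_N$ and $b_{N+1} \in B^r(\Gamma,\AAA^{N+1}D)$. Since $\varepsilon_{N+1}$ is induced by the inclusion $\AAA^{N+1}D^\alpha \hookrightarrow \AAA^{N+1}D$ and $b_{N+1}$ is already a coboundary over $\AAA^{N+1}D$, one gets at once
\[\varepsilon_{N+1}(\phi_{N+1}) = [\tau_{N+1} + b_{N+1}]_D = [\tau_{N+1}]_D,\]
and by Claim~\ref{tauind} this class is canonical. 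So the statement reduces to the single identity $\phi_{N+1}|V_{N+1} = [\tau_{N+1}]_D$.

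To get at $\phi_{N+1}|V_{N+1}$ I would unwind $V_{N+1}$ at the level of cochains: lift the $\AAA^{N+1}D^\alpha$-valued cocycle $\varphi_{N+1}$ to a cochain $\widehat{\varphi} \in C^r(\Gamma,D^\alpha)$, apply $V$ on cochains (legitimate, since by condition~(a) of \ref{liftingtheorem} $\pi$, and hence $V$, acts on $D^\alpha$-valued cochains), and reduce modulo $\f^{N+1}D$, so that $\phi_{N+1}|V_{N+1} = [\rho^{N+1}(\widehat{\varphi}|V)]_D$. A short preliminary check shows this is independent of the lift $\widehat{\varphi}$: altering $\widehat{\varphi}$ by an $\f^{N+1}D^\alpha$-valued cochain alters $\widehat{\varphi}|V$ by a cochain valued in $\f^{N+2}D \subseteq \f^{N+1}D$ — apply condition~(b) to each summand of the explicit formula $(\psi|U)(f_r) = \sum_i \psi(\tau_r(f_r\cdot\pi_i^{-1}))\cdot\pi\pi_i$, using that the filtration is $\Sigma$-stable so the right translates by $\pi\pi_i$ stay in the same step, and then divide by the nonzerodivisor $\alpha$ — and this vanishes after reduction.

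The last step is the comparison $[\rho^{N+1}(\widehat{\varphi}|V)]_D = [\rho^{N+1}(\varphi|V)]_D = [\tau_{N+1}]_D$. The key observation is that $\varphi$ and $\widehat{\varphi}$ reduce modulo $\f^N$ to the same cochain $\varphi_N$ — for $\widehat{\varphi}$ because $\rho^{N+1,N}(\varphi_{N+1}) = \varphi_N$ by the preceding claim — so $\varphi - \widehat{\varphi}$ takes values in $\f^ND \cap D^\alpha = \f^ND^\alpha$. Running the same term-by-term argument with condition~(b) then shows $(\varphi - \widehat{\varphi})|V$ takes values in $\f^{N+1}D$, hence reduces to $0$ in $\AAA^{N+1}D$; this gives the desired identity and completes the claim. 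Together with equation~(\ref{ila}) this closes the induction, and hence the surjectivity half of Theorem~\ref{liftingtheorem}.

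The step I expect to be most delicate is not any single estimate but the bookkeeping of cochain-level representatives: ensuring that the lift $\widehat{\varphi}$ of $\varphi_{N+1}$ is (or can be chosen) congruent modulo $\f^N$ to the original lift $\varphi$ of $\varphi_N$, and applying conditions~(a)--(b) faithfully term by term in the cochain formula for $U$, where $\Sigma$-stability of the filtration is exactly what licenses absorbing the translates by $\pi\pi_i$.
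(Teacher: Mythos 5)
Your proof is correct and takes essentially the same route as the paper: both reduce the claim to the two equalities $\varepsilon(\phi_{N+1}) = [\tau_{N+1}]_D$ (since $b_{N+1}$ is a coboundary with values in $\AAA^{N+1}D$) and $\phi_{N+1}|V_{N+1} = [\tau_{N+1}]_D$ (since any $D^\alpha$-valued lift of $\varphi_{N+1}$ is also a lift of $\varphi_N$). The only difference is presentational: where the paper cites Claim \ref{tauind} for the second equality, you re-derive the required independence directly at the cochain level using condition (b) and the $\Sigma$-stability of the filtration, which is the same underlying argument.
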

\begin{proof}It's clear that the representative $\varphi_{N+1}$ of $\phi_{N+1}$ is a lift of $\varphi_N$, by definition. Thus any lift $\varphi$ of $\varphi_{N+1}$ to a cochain taking values in $D^\alpha$ is also a lift of $\varphi_{N}$, and accordingly, it follows that 
\[\phi_{N+1}|V_{N+1} \defeq [\rho^{N+1}(\varphi|V)]_D = [\tau_{N+1}]_D.\]
Also by definition, $\varphi_{N+1}$ and $\tau_{N+1}$ represent the same elements of $\comp{\AAA^{N+1}D}$, so that $\varepsilon(\phi_{N+1}) = [\tau_{N+1}]_D$. Combining the two equalities gives $\varepsilon(\phi_{N+1}) = \phi_{N+1}|V_{N+1}$, which is the required result.
\end{proof}

Thus we obtain surjectivity. Take some $U$-eigensymbol $\phi_0 \in \comp{V^\alpha} = \comp{\AAA^0D^\alpha}$, and for each $N \in \N$, lift it to a $U$-eigensymbol $\phi_N$ using the above method. Then we obtain an element of the inverse limit $\lim_\leftarrow\comp{\AAA^ND^\alpha},$ which we know is isomorphic in a natural way to $\comp{D^\alpha}$. This element is thus a $U$-eigensymbol that maps to $\phi_0$ under the specialisation map.\\
\\
It remains to prove injectivity. Suppose $\phi \in \ker(\rho)$; we aim to show that $\phi = 0$. Consider the exact sequence
\[0 \longrightarrow \f^0D^\alpha \longrightarrow D^\alpha \longrightarrow V^\alpha \longrightarrow 0.\]
This leads to a long exact sequence of cohomology
\[\cdots \comp{\f^0D^\alpha} \longrightarrow \comp{D^\alpha} \labelrightarrow{\rho} \comp{V^\alpha} \longrightarrow \cdots,\]
and accordingly any element of $\ker(\rho)$ must lie in the image of $\comp{\f^0D^\alpha}$. This is the same as saying $\phi$ can be represented by a cocycle $\varphi$ taking values in $\f^0D^\alpha$. We now conclude using:
\begin{mcla}Let $\phi \in \comp{D^\alpha}$ be represented by a cocycle $\varphi$ taking values in $\f^0D^\alpha$. If $\phi$ is a $U$-eigensymbol, then $\phi = 0$.
\end{mcla}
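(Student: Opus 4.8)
The plan is to show that the hypotheses force $\phi$ to be represented, for every $N$, by a cocycle taking values in $\f^N D^\alpha$; since $\bigcap_N \f^N D = 0$ and $D \cong \lim_{\leftarrow} \AAA^N D$ (and similarly for $D^\alpha$), this will imply $\phi = 0$. So the argument is an induction on $N$, mirroring the surjectivity construction but run in the opposite direction.

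First I would set up the inductive step. Suppose $\phi$ is represented by a cocycle $\varphi$ with values in $\f^N D^\alpha$. Since $\phi$ is a $U$-eigensymbol of eigenvalue $\alpha$, we have $\varepsilon(\phi) = \phi|V$ in $\comp{D}$; concretely, $\varphi$ and $\varphi|V$ differ by a coboundary over $D$. Now the key computation: because $\varphi$ takes values in $\f^N D^\alpha$, condition (b) of \ref{liftingtheorem} (applied at the level of cochains, exactly as in the proof of Claim \ref{tauind}) shows that $\varphi|\pi$ takes values in $\alpha \f^{N+1}D$, hence $\varphi|V$ takes values in $\f^{N+1}D$. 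So $\phi|V$, as a class in $\comp{D}$, is represented by a cocycle valued in $\f^{N+1}D$. Combined with $\varepsilon(\phi) = \phi|V$, this tells us that the image of $\phi$ in $\comp{\AAA^{N+1}D}$ — or rather, the relevant comparison — lands in the image of $\comp{\f^{N+1}D}$.

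The slightly delicate point — and the step I expect to be the main obstacle — is passing from ``$\varphi|V$ is valued in $\f^{N+1}D$'' back to ``$\phi$ is represented by a cocycle valued in $\f^{N+1}D^\alpha$'', i.e. chasing the coboundary relation down into the smaller space $D^\alpha$ and simultaneously into the next filtration step. Here I would argue as follows. The eigensymbol relation gives $\varphi - \varphi|V = d(c)$ for some cochain $c$ over $D$. Reducing mod $\f^{N+1}D$: the class of $\varphi$ in $\comp{\AAA^{N+1}D}$ equals the class of $\varphi|V$, which is the zero class in $\comp{\AAA^{N+1}D}$ composed with the inclusion of $\f^{N+1}$—more carefully, $\varphi|V$ reduces to $0$ in $\AAA^{N+1}D$, so $\varphi$ reduces to a coboundary $d(\bar c)$ in $\AAA^{N+1}D$. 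But $\varphi$ already takes values in $\f^N D^\alpha \subseteq D^\alpha$, and its reduction is a coboundary over $\AAA^{N+1}D$; using that $\comp{D^\alpha}$ and $\comp{V^\alpha}$ (and the intermediate quotients) have trivial $R$-torsion, and arguing exactly as in the second and third claims of the surjectivity proof — lifting $\bar c$ to $\AAA^{N+1}D$, adjusting $\varphi$ by the corresponding coboundary — one produces a cohomologous cocycle $\varphi'$ representing $\phi$ whose values lie in $\f^{N+1}D^\alpha$. The torsion-freeness is what makes ``differs by a coboundary over $D$'' upgrade to ``differs by a coboundary over $D^\alpha$'' after the reductions, just as in Claim (the third claim above).

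Finally I would close the induction: starting from the base case $N=0$ (which is given: $\phi \in \ker\rho$ is represented by a cocycle valued in $\f^0 D^\alpha$), we obtain for every $N$ a cocycle $\varphi^{(N)}$ valued in $\f^N D^\alpha$ representing $\phi$. Under the isomorphism $\comp{D^\alpha} \cong \lim_{\leftarrow} \comp{\AAA^N D^\alpha}$ of (\ref{ila}), the class $\phi$ maps to a compatible system all of whose components vanish (since $\varphi^{(N)}$ is valued in $\f^N D^\alpha$, its image in $\AAA^N D^\alpha$ is zero). Therefore $\phi = 0$, completing the proof of injectivity and hence of Theorem \ref{liftingtheorem}.
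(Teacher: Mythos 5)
Your overall strategy --- keeping representatives of $\phi$ inside $D^\alpha$ and pushing them one step down the filtration at each stage --- is genuinely different from the paper's, and the step you yourself flag as delicate is exactly where it breaks. From the eigensymbol property you only get $\varphi - \varphi|V = d(c)$ with $c$ valued in $D$, and condition (b) places the values of $\varphi|V$ in $\f^{N+1}D$, not in $\f^{N+1}D^\alpha$: the operator $V$ maps $D^\alpha$-valued cochains out into $D$-valued ones. Consequently the natural new representative $\varphi - d(c) = \varphi|V$ neither takes values in $D^\alpha$, nor is it obtained from $\varphi$ by a coboundary of a $D^\alpha$-valued cochain; adjusting by a $D$-valued coboundary only preserves the class in $\comp{D}$, whereas your induction must track the class in $\comp{D^\alpha}$. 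Trivial $R$-torsion does not repair this: its role is to make division by $\alpha$ (hence $V$) well defined, and it has no bearing on upgrading a coboundary over $D$ to one over $D^\alpha$. What your inductive step actually needs is a descent statement of the form ``if $d(\bar c)$ takes values in $\AAA^{N+1}D^\alpha$ then $\bar c$ may be chosen with values in $\AAA^{N+1}D^\alpha$'', applied after reducing $\varphi - \varphi|V = d(c)$ modulo $\f^{N+1}D$; that is not what the second and third claims of the surjectivity proof provide (they lift a coboundary from level $N$ to level $N+1$, a different move), and you neither state nor justify it. As written, the inductive step is asserted rather than proved, and the reason offered (torsion-freeness) is not the relevant one.

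The paper sidesteps precisely this difficulty by splitting the argument in two, never requiring a $D^\alpha$-valued representative in the image of $V$. First it shows $\varepsilon(\phi) = [\varphi]_D = 0$ in $\comp{D}$: since $\varphi$ takes values in $\f^0D$ and $[\varphi]_D$ is a fixed point of $V$, condition (b) shows $[\varphi]_D$ is represented by $\varphi|V^N$, a cocycle valued in $\f^ND$, for every $N$, and the trivial intersection of the $\f^ND$ kills the class --- here one works entirely in $D$ and never needs to stay inside $D^\alpha$. Second, it shows $\varepsilon$ is injective on such classes: writing $\varphi = d(c)$ with $c$ valued in $D$, the fact that $\varphi$ itself is $D^\alpha$-valued is used to force $c$ to be $D^\alpha$-valued, whence $\phi = [\varphi]_{D^\alpha} = 0$. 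To salvage your route you would have to prove the descent statement above; otherwise you should restructure the argument along the paper's two-step lines.
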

\begin{proof}
We consider $\varepsilon(\phi) = [\varphi]_D$, which is also a $U$-eigensymbol. It thus makes sense to apply the operator $V$ to $[\varphi]_D$, for which it is a fixed point. By condition (b) of Theorem \ref{liftingtheorem}, the $V$ operator takes $\f^ND$ to $\f^{N+1}D$; therefore, as $[\varphi]_D$ is represented by $\varphi|V^N$ for any $N$ (by the eigensymbol property), we see that for each $N$, the symbol $[\varphi]_D$ is represented by a cocycle taking values in $\f^ND$. But the intersection of the $\f^ND$ is trivial by assumption. Thus $\varepsilon(\phi) = [\varphi]_D$ is 0.\\
\\
It remains to prove that the map $\varepsilon$ is injective. We now know that $\varphi$ is a coboundary in $C^n(\Gamma,D)$, so that there exists some $c \in C^{n-1}(\Gamma,D)$ with $\varphi = d(c)$. But as $\varphi$ takes values in $D^\alpha$, it follows that $c$ must also take values in $D^\alpha$. Thus $\varphi$ is also a coboundary in $C^n(\Gamma,D^\alpha)$, and $\phi = [\varphi]_{D^\alpha} = 0$, as required.
\end{proof}

This completes the proof of Theorem \ref{liftingtheorem}.
\end{proof}

%
%
%
%
\section{Application to $\GLt\times\GLt$}\label{gl2}
As an example of where this theorem applies, we give a brief summary of the case of $\GLt\times\GLt$, which is conceptually easier to understand than the case of $\SLthree$. In particular, we present the results in a concrete setting in the style of \cite{Wil17}, where these results were first proved. Recall the set-up:
\begin{mnot} Let $K$ be an imaginary quadratic field with ring of integers $\roi_K$, and let $p$ be a rational prime that splits as $\pri\pribar$ in $K$. Let $\Gamma \subset \Gamma_0(p)\subset \SLt(\roi_K)$ be a congruence subgroup. Let $\Sigma$ denote the set of complex embeddings of $K$, and let $\lambda = (k,k) \in Z[\Sigma]$ be a weight, where $k$ is non-negative. Let $L/\Qp$ be a finite extension with ring of integers $\roi_L$.
\end{mnot}
\subsection{Coefficient Modules}
\begin{mdef}Let $V_k(\roi_L)\defeq\mathrm{Sym}^k(\roi_L^2)$ be the space of homogeneous polynomials in two variables of degree $k$ over $\roi_L$.
\end{mdef}
We can identify $V_k(\roi_L)\otimes V_k(\roi_L)$ with a space of polynomial functions on $\roi_K\otimes_{\Z}\Zp$ in a natural way. 
\begin{mdef}Let $\A_k(\roi_L)\defeq \roi_L\langle z\rangle$ be the Tate algebra over $\roi_L$, that is, the space of power series in one variable whose coefficients tend to zero as the degree tends to infinity.
\end{mdef}
\begin{mrem}For ease of notation, we will henceforth drop $\roi_L$ from the notation. All tensor products are over $\roi_L$.
\end{mrem}
Let $\Sigma_0(p) \subset M_2(\roi_L)\cap \GLt(L)$ be the set of matrices that are upper-triangular modulo $p$. In particular, we have $\Gamma\subset\Sigma_0(p)$. Then $\A_k$ has a natural left action of $\Sigma_0(p)$, depending on $k$ (justifying the notation), given by
\[\matr\cdot f(z) = (a+cz)^kf\left(\frac{b+dz}{a+cz}\right).\]
This action preserves the subspace $V_k$ and hence gives rise to component-wise actions of $\Sigma_0(p)^2$ on $V_k\otimes V_k$, $V_k\otimes\A_k$ and $\A_k\otimes \A_k$. Accordingly, we get right actions of $\Sigma_0(p)^2$ on their corresponding topological duals $\VVkdual$, $\VDk$ and $\DDk$ respectively. By dualising the inclusions, we get $\Sigma_0(p)^2$-equivariant surjections
\[\DDk \labelrightarrow{\pr_2} \VDk \labelrightarrow{\pr_1} \VVkdual,\]
that induce maps
\[\h^1(\Gamma,\DDk)\labelrightarrow{\rho_2}\h^1(\Gamma,\VDk)\labelrightarrow{\rho_1}\h^1(\Gamma,\VVkdual)\]
on the cohomology.
$\lb$
We define filtrations as follows:
\begin{mdef}\begin{itemize}
\item[(i)]Let $N$ be an integer and define $\f^N\D_k \defeq \{\mu\in\D_k:\mu(z^r) \in \pi_L^{N-r}\roi_L\}$, where $\pi_L$ is a uniformiser in $\roi_L$. Then define
\[\f^N[\VDk] \defeq V_k^*\otimes \f^N\D_k.\]
This is $\Sigma_0(p)$-stable by arguments in \cite{Gre07} and \cite{Wil17}. Now define
\[F^N[\VDk] \defeq \f^N[\VDk] \cap \ker(\pr_1),\]
which is also $\Sigma_0(p)$-stable as $\pr_1$ is $\Sigma_0(p)$-equivariant.
\item[(ii)]Similarly, define $F^N[\DDk] \defeq (\f^N\DDk)\cap\ker(\pr_2),$ which again is $\Sigma_0(p)$-stable.
\end{itemize}
\end{mdef}
Let $\alpha \in \roi_L$ and let $\pi_{\pri}\defeq[\smallmatrd{1}{0}{0}{1},\smallmatrd{1}{0}{0}{p}]$ and $\pi_{\pribar}\defeq[\smallmatrd{1}{0}{0}{p},\smallmatrd{1}{0}{0}{1}]\in \Sigma_0(p)^2.$ First, we have:
\begin{mlem}Suppose $v_p(\alpha)<k+1$. Then we have
\begin{itemize}
\item[(i)] If $\mu \in F^N[\VDk]$, then $\mu|\pi_{\pri} \in \alpha F^{N+1}[\VDk].$
\item[(ii)] If $\mu \in F^N[\DDk]$, then $\mu|\pi_{\pribar} \in \alpha F^{N+1}[\DDk].$
\end{itemize}
\end{mlem}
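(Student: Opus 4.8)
The plan is to verify this lemma --- which is precisely condition (b) of Theorem~\ref{liftingtheorem}, for the filtered module $\VDk$ with respect to $\pi_{\pri}$ in part (i) and for $\DDk$ with respect to $\pi_{\pribar}$ in part (ii) --- by an explicit computation on moments. Parts (i) and (ii) are proven by the identical argument with the roles of the two tensor factors interchanged, so I would treat (i) in detail and only indicate the changes for (ii) at the end. The first step is to record how $\pi_{\pri}$ acts on moments. Since $\pi_{\pri}=[\smallmatrd{1}{0}{0}{1},\smallmatrd{1}{0}{0}{p}]$, its first component $\smallmatrd{1}{0}{0}{1}$ acts trivially on the factor $V_k^*$ and its second acts on $\D_k$ through $\smallmatrd{1}{0}{0}{p}$, which by the action formula sends $z^r$ to $(pz)^r=p^rz^r$; since the action on the dual is $(\mu|g)(f)=\mu(g\cdot f)$, this gives $(\mu|\smallmatrd{1}{0}{0}{p})(z^r)=p^r\mu(z^r)$. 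Thus $\pi_{\pri}$ multiplies the $r$-th moment of any element of $\VDk$ (in the $\D_k$-factor) by $p^r$; fixing a basis of $V_k^*$ identifies $\VDk$ with $\D_k^{\oplus(k+1)}$, on which $\pi_{\pri}$ acts diagonally, so it suffices to argue in a single copy of $\D_k$.

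Next I would unwind the definition $F^N[\VDk]=\f^N[\VDk]\cap\ker(\pr_1)$. For a distribution $\mu$ in this copy of $\D_k$, membership in $\f^N\D_k$ reads $\mu(z^r)\in\pi_L^{N-r}\roi_L$ for all $r$, while membership in $\ker(\pr_1)$ forces $\mu$ to vanish on $V_k\subset\A_k$, i.e.\ $\mu(z^r)=0$ for $0\le r\le k$. Hence for $\mu\in F^N[\VDk]$ the only possibly-nonzero moments have $r\ge k+1$, each satisfying $\mu(z^r)\in\pi_L^{N-r}\roi_L$. Combining this with the first step, the moments of $\mu|\pi_{\pri}$ vanish for $r\le k$ and lie in $p^r\pi_L^{N-r}\roi_L$ for $r\ge k+1$. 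Since $\ker(\pr_1)$ is $\Sigma_0(p)^2$-stable we get $\mu|\pi_{\pri}\in\ker(\pr_1)$ for free, so the remaining task is to show that $\alpha^{-1}(\mu|\pi_{\pri})$ is integral and lies in $\f^{N+1}\D_k$ --- a moment-by-moment valuation count.

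That valuation count is the heart of the matter, and the only place the hypothesis $v_p(\alpha)<k+1$ enters. Writing $e$ for the ramification index of $L/\Qp$, so that $v_p(\pi_L)=1/e$, for each surviving index $r\ge k+1$ one has $v_p\big((\alpha^{-1}(\mu|\pi_{\pri}))(z^r)\big)\ge r-v_p(\alpha)+v_p(\pi_L^{N-r})$, and the target inequality $\ge v_p(\pi_L^{N+1-r})$ --- which simultaneously forces integrality --- follows as soon as $r\ge v_p(\alpha)+1/e$. Now $\alpha\in\roi_L$ forces $v_p(\alpha)\in\tfrac1e\Z_{\geq 0}$, so the strict inequality $v_p(\alpha)<k+1$ sharpens to $v_p(\alpha)\le k+1-\tfrac1e$, whence $r\ge k+1\ge v_p(\alpha)+\tfrac1e$ for every such $r$, and we conclude $\mu|\pi_{\pri}\in\alpha F^{N+1}[\VDk]$. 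Structurally, the point is that the $\ker(\pr_1)$-truncation kills the moments $z^0,\dots,z^k$, so $\pi_{\pri}$ scales the lowest surviving moment $z^{k+1}$ by $p^{k+1}$ (of valuation $k+1$), leaving just enough slack to absorb both the division by $\alpha$ and the one-step shift $\pi_L^N\rightsquigarrow\pi_L^{N+1}$ in the filtration index; on $\f^N[\VDk]$ without the truncation the $r=0$ moment is fixed by $\pi_{\pri}$ and the argument collapses unless $\alpha$ is a unit --- the ordinary case. I do not anticipate a serious obstacle beyond carrying out this estimate cleanly, the only subtlety being the integrality check at the moments $z^r$ with $r\ge N$, where the $\f^N$-condition is vacuous but $r-v_p(\alpha)>0$ still suffices. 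Finally, part (ii) is the verbatim computation applied to the first tensor factor of $\DDk$: one replaces $\pi_{\pri}$ by $\pi_{\pribar}=[\smallmatrd{1}{0}{0}{p},\smallmatrd{1}{0}{0}{1}]$, the filtration $\f^N\D_k$ by $\f^N\DDk$, and $\ker(\pr_1)$ by $\ker(\pr_2)$ (vanishing of the first factor on $V_k\subset\A_k$).
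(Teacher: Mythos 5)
Your argument is correct and matches the intended one: the paper itself states this lemma without proof (it is quoted from \cite{Wil17}), but its proof of the $\SLthree$ analogue (Lemma \ref{partb} via Proposition \ref{kernelcond}) is exactly your strategy — the kernel condition kills the moments $z^r$ for $r\le k$, $\pi_{\pri}$ scales the surviving moments by $p^r$ with $r\ge k+1$, and the valuation count $r\ge v_p(\alpha)+1/e$ (using $\alpha\in\roi_L$ and $v_p(\alpha)<k+1$) absorbs both the division by $\alpha$ and the shift from $\pi_L^{N-r}$ to $\pi_L^{N+1-r}$. Your handling of the side points (componentwise reduction via a basis of $V_k^*$, $\Sigma_0(p)^2$-stability of $\ker(\pr_1)$, and the separate integrality check for $r>N$) is also sound, so there is nothing to add.
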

We then define the analogue of the module $D^\alpha$ as follows:
\begin{mdef}
\begin{itemize}
\item[(i)]Define $\D_k^\alpha \defeq \{\mu\in\D_k: \mu(z^r) \in \alpha p^{-r}\roi_L\}$, and then define 
\[ [\VDk]^\alpha \defeq V_k^*\otimes \D_k^\alpha.\]
\item[(ii)]Similarly, define $[\DDk]^\alpha \defeq \D_k^\alpha\widehat{\otimes}\D_k$.
\end{itemize}
\end{mdef}
\begin{mlem}\begin{itemize}
\item[(i)] If $\mu\in[\VDk]^{\alpha}$, then $\mu|\pi_{\pri} \in \alpha\VDk.$
\item[(ii)] If $\mu\in[\DDk]^{\alpha}$, then $\mu|\pi_{\pribar}\in\alpha\DDk.$
\end{itemize}
\end{mlem}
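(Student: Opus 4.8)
The plan is to reduce both parts to a single computation about the action of the diagonal matrix $\smallmatrd{1}{0}{0}{p}$ on the one-variable module $\D_k$. Indeed, $\pi_{\pri}$ acts componentwise on $\VDk = V_k^*\otimes\D_k$, trivially on the $V_k^*$-factor (through $\smallmatrd{1}{0}{0}{1}$) and through $\smallmatrd{1}{0}{0}{p}$ on the $\D_k$-factor, while $\pi_{\pribar}$ acts through $\smallmatrd{1}{0}{0}{p}$ on the first factor of $\DDk$ and trivially on the second. So it suffices to prove the single inclusion $\D_k^\alpha|\smallmatrd{1}{0}{0}{p}\subseteq\alpha\D_k$; then part (i) follows by tensoring with $V_k^*$ and part (ii) by completed-tensoring with $\D_k$, using only that these operations are compatible with the scaling map ``multiplication by $\alpha$'' (which is where $\roi_L$-torsion-freeness of the modules enters).

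For the core claim, recall the action formula $\matr\cdot f(z) = (a+cz)^kf\big((b+dz)/(a+cz)\big)$: for the matrix $\smallmatrd{1}{0}{0}{p}$ the weight factor $(a+cz)^k$ equals $1$, so this matrix simply sends $f(z)$ to $f(pz)$. Dualising, $(\mu|\smallmatrd{1}{0}{0}{p})(z^r) = \mu((pz)^r) = p^r\,\mu(z^r)$ for every $r\geq 0$. Now if $\mu\in\D_k^\alpha$, then by definition $\mu(z^r)\in\alpha p^{-r}\roi_L$, whence $p^r\mu(z^r)\in\alpha\roi_L$ for all $r$; since $\D_k$ is precisely the module of distributions all of whose moments against the $z^r$ lie in $\roi_L$, this says exactly that $\mu|\smallmatrd{1}{0}{0}{p}\in\alpha\D_k$. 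Note this uses no bound on $v_p(\alpha)$ whatsoever: unlike the filtration-shift condition verified in the preceding lemma, this condition holds for all nonzero $\alpha$.

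It then remains to assemble the two statements. For (i): a general element of $[\VDk]^\alpha = V_k^*\otimes\D_k^\alpha$ is a finite sum $\sum_i v_i\otimes\mu_i$ with $\mu_i\in\D_k^\alpha$, and applying $\pi_{\pri}$ gives $\sum_i v_i\otimes(\mu_i|\smallmatrd{1}{0}{0}{p})$, which by the above lies in $V_k^*\otimes\alpha\D_k = \alpha\,\VDk$. For (ii): a general element of $[\DDk]^\alpha = \D_k^\alpha\widehat{\otimes}\D_k$ acted on by $\pi_{\pribar}$ becomes (a limit of) sums $\sum_i(\mu_i|\smallmatrd{1}{0}{0}{p})\otimes\nu_i$ with each $\mu_i|\smallmatrd{1}{0}{0}{p}\in\alpha\D_k$, so the result lies in $\alpha\D_k\widehat{\otimes}\D_k = \alpha\,\DDk$.

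I do not expect a genuine obstacle; the one point deserving a careful sentence is the identity $\alpha\D_k\widehat{\otimes}\D_k = \alpha(\D_k\widehat{\otimes}\D_k)$ (and its one-variable analogue with $V_k^*$). This is clean once one recalls that $\D_k\widehat{\otimes}\D_k$ is itself cut out moment-by-moment — it is the module of two-variable distributions whose mixed moments against $z_1^{r_1}z_2^{r_2}$ all lie in $\roi_L$ — so that scaling the first tensor factor by $\alpha$ is literally the same as scaling the whole module by $\alpha$, and $\roi_L$-torsion-freeness upgrades the inclusions above to the equalities we need on the relevant submodules. One should also note in passing that the $\smallmatrd{1}{0}{0}{p}$-action really is $f(z)\mapsto f(pz)$ with the conventions fixed earlier in this section, i.e. that no normalising twist is hidden in the definitions; this is immediate from the displayed action formula.
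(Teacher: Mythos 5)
Your proof is correct and is essentially the approach the paper takes: this lemma is stated in Section 3 without proof (the results are quoted from \cite{Wil17}), but the paper's proof of the directly analogous $\SLthree$ statement, Lemma \ref{parta}, is exactly your core computation — acting by $\smallmatrd{1}{0}{0}{p}$ multiplies the $r$-th moment by $p^r$, which cancels the $p^{-r}$ in the definition of $\D_k^\alpha$, leaving all moments in $\alpha\roi_L$, with no bound on $v_p(\alpha)$ required. Your additional care over the componentwise action of $\pi_{\pri}$, $\pi_{\pribar}$ and the identification $\alpha\D_k\widehat{\otimes}\D_k = \alpha(\D_k\widehat{\otimes}\D_k)$ via the moment-by-moment description is sound and fills in details the paper leaves implicit.
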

Accordingly, we can lift using Theorem \ref{liftingtheorem}, first along $\rho_1$ using the operator $U_{\pri}$ induced by $\pi_{\pri}$, and secondly along $\rho_2$ using the operator $U_{\pribar}$ induced by $\pi_{\pribar}$. In particular, we have:
\begin{mthm}
Let $\alpha_{\pri},\alpha_{\pribar} \in \roi_L$ with $v_p(\alpha_1),v_p(\alpha_2) < k+1.$ Then the restriction of the map $\rho \defeq \rho_2\rho_1$ to the simultaneous $\alpha_{\pri}$ and $\alpha_{\pribar}$ eigenspaces of the $U_{\pri}$ and $U_{\pribar}$ operators respectively is an isomorphism.
\end{mthm}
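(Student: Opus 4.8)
The plan is to obtain the statement by applying the general lifting theorem, Theorem~\ref{liftingtheorem}, twice in succession: first along $\pr_1$ with the operator $U_\pri$, then along $\pr_2$ with the operator $U_\pribar$, and then to splice the two resulting isomorphisms of eigenspaces together over the intermediate cohomology group $\h^1(\Gamma,\VDk)$.

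For the first application I take $\Sigma = \Sigma_0(p)^2$, the group $\Gamma$ as given, $R = \roi_L$, the module $D = \VDk$ with the filtration $\f^N D \defeq F^N[\VDk]$, the submodule $D^\alpha \defeq [\VDk]^{\alpha_\pri}$, and $\pi \defeq \pi_\pri$, so that the operator $U_\pi$ of the theorem is $U_\pri$ and $\alpha = \alpha_\pri$. The standing hypotheses of Notation~\ref{setup} hold: the filtration is $\Sigma_0(p)^2$-stable, hence $\Gamma$-stable, with trivial intersection, and $\lim_\leftarrow\AAA^N[\VDk] = \VDk$ because $\f^0\D_k = \D_k$ forces $F^0[\VDk] = \ker(\pr_1)$ and hence $\AAA^0[\VDk] = \VVkdual$; the $\roi_L$-torsion-freeness of $D^\alpha$, of $V^\alpha$ and of their cohomology is part of the standing assumptions on $\Gamma$ and the coefficient modules. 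Conditions (a) and (b) of Theorem~\ref{liftingtheorem} are precisely the assertions of the two Lemmas above concerning $\pi_\pri$, which hold because $v_p(\alpha_\pri) < k+1$. Theorem~\ref{liftingtheorem} then shows that $\rho_1$ restricts to an isomorphism
\[\h^1\bigl(\Gamma,[\VDk]^{\alpha_\pri}\bigr)^{U_\pri=\alpha_\pri}\ \isorightarrow\ \h^1\bigl(\Gamma,[\VVkdual]^{\alpha_\pri}\bigr)^{U_\pri=\alpha_\pri},\]
where $[\VVkdual]^{\alpha_\pri} \defeq \mathrm{Im}\bigl([\VDk]^{\alpha_\pri}\to\VVkdual\bigr)$. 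Running the identical argument with $D = \DDk$, $\f^N D = F^N[\DDk]$, $D^\alpha = [\DDk]^{\alpha_\pribar}$ and $\pi = \pi_\pribar$ --- now using the assertions of the two Lemmas concerning $\pi_\pribar$, the bound $v_p(\alpha_\pribar) < k+1$, and $\AAA^0[\DDk] = \VDk$ --- shows that $\rho_2$ restricts to an isomorphism
\[\h^1\bigl(\Gamma,[\DDk]^{\alpha_\pribar}\bigr)^{U_\pribar=\alpha_\pribar}\ \isorightarrow\ \h^1\bigl(\Gamma,[\VDk]^{\alpha_\pribar}\bigr)^{U_\pribar=\alpha_\pribar},\]
where $[\VDk]^{\alpha_\pribar} \defeq \mathrm{Im}\bigl([\DDk]^{\alpha_\pribar}\to\VDk\bigr)$.

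To splice these, note first that $U_\pri$ and $U_\pribar$ commute on the cohomology of each $\Sigma_0(p)^2$-module in sight and that $\rho_1$, $\rho_2$ are equivariant for both; so each of the two isomorphisms above restricts further to the simultaneous eigenspace where $U_\pri = \alpha_\pri$ and $U_\pribar = \alpha_\pribar$. It remains to identify, at each of the three levels $\DDk$, $\VDk$, $\VVkdual$, the simultaneous small-slope eigenspace of the auxiliary submodule with that of the ambient module. This is the injectivity argument from the proof of Theorem~\ref{liftingtheorem}, used in both directions: the canonical map $\varepsilon\colon\h^1(\Gamma,D^\alpha)\to\h^1(\Gamma,D)$ is injective on the $U_\pi=\alpha$ eigenspace --- a class it kills is a coboundary over $D$, hence over $D^\alpha$ since $D^\alpha$ injects into $D$ --- and its image is the whole $U_\pi=\alpha$ eigenspace of $\h^1(\Gamma,D)$, because by the eigensymbol relation such a class is represented by a cocycle of the form $\varphi|V^N$ for every $N$, and condition (a) pushes any such representative into $D^\alpha$. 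The one delicate point is the middle module: $[\VDk]^{\alpha_\pri}$ constrains the tensor factor on which $\pi_\pri$ acts nontrivially, while $[\VDk]^{\alpha_\pribar}$ constrains the factor on which $\pi_\pribar$ acts nontrivially, so these are genuinely different submodules of $\VDk$; but on the simultaneous $(\alpha_\pri,\alpha_\pribar)$-eigenspace both are identified, through the respective $\varepsilon$, with one and the same subspace of $\h^1(\Gamma,\VDk)$, precisely because the two conditions are imposed on disjoint factors. Composing, one obtains a chain of isomorphisms --- built from $\rho_2$, from $\rho_1$, and from the three $\varepsilon$-identifications --- from the simultaneous $(\alpha_\pri,\alpha_\pribar)$-eigenspace of $\h^1(\Gamma,\DDk)$ through that of $\h^1(\Gamma,\VDk)$ to that of $\h^1(\Gamma,\VVkdual)$; the composite is the restriction of $\rho = \rho_2\rho_1$, and this is the asserted isomorphism.

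The genuine content is thus carried by Theorem~\ref{liftingtheorem} and by the two Lemmas, the latter encoding the slope bound $v_p(\alpha_\pri),v_p(\alpha_\pribar)<k+1$ that condition (b) forces. The real obstacle that remains is the last step --- verifying that $\varepsilon$ induces an isomorphism of small-slope eigenspaces at each of the three levels, and that the two distinct decorations of the middle module $\VDk$, one coming from the $U_\pri$-lift and one from the $U_\pribar$-lift, are compatibly matched on the simultaneous eigenspace, which again rests on the fact that they impose conditions on disjoint tensor factors.
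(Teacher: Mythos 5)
Your first two paragraphs reproduce exactly the argument the paper intends for this theorem (and, in the paper, only sketches, deferring to \cite{Wil17}): apply Theorem \ref{liftingtheorem} once with $D=\VDk$, $\f^ND=F^N[\VDk]$, $D^\alpha=[\VDk]^{\alpha_\pri}$, $\pi=\pi_\pri$, and once with $D=\DDk$, $F^N[\DDk]$, $[\DDk]^{\alpha_\pribar}$, $\pi=\pi_\pribar$, the two displayed Lemmas supplying conditions (a) and (b) under the slope bounds $v_p(\alpha_\pri),v_p(\alpha_\pribar)<k+1$, and then use equivariance of $\rho_1,\rho_2$ for both operators (and their commutativity) so that simultaneous eigenclasses lift to simultaneous eigenclasses --- the same mechanism as in the proof of Theorem \ref{controltheorem}. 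Up to that point your proposal is correct and is the paper's route; your identifications $\AAA^0[\VDk]=\VVkdual$ and $\AAA^0[\DDk]=\VDk$ are also right.

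The difficulty is your third paragraph. Theorem \ref{liftingtheorem} produces isomorphisms between eigenspaces of $\h^1(\Gamma,D^\alpha)$ and $\h^1(\Gamma,V^\alpha)$, and you try to upgrade these to the eigenspaces of the undecorated modules $\h^1(\Gamma,\DDk)$, $\h^1(\Gamma,\VDk)$, $\h^1(\Gamma,\VVkdual)$ by asserting that $\varepsilon\colon \h^1(\Gamma,D^\alpha)\to\h^1(\Gamma,D)$ is an isomorphism on $U_\pi=\alpha$ eigenspaces. The surjectivity half of that assertion is not justified: the operator $V_\pi$ is defined only on $D^\alpha$-valued cochains (that is precisely what condition (a) provides), so for a general $\alpha$-eigenclass of $\h^1(\Gamma,D)$ the representatives $\varphi|V^N$ you invoke do not exist, and condition (a) says nothing about $\mu|\pi$ for $\mu\in D\setminus D^\alpha$. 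The eigenclass relation $\varphi|U_\pi=\alpha\varphi+db$ with $D$-valued data only introduces denominators; it does not force the moment condition $\mu(z^r)\in\alpha p^{-r}\roi_L$ defining $\D_k^\alpha$, which is strictly stronger than integrality in the low moments when $\alpha$ is a non-unit. Your matching of the two different decorations of the middle module rests on this same unproved identification, so the disjoint-factors remark does not repair it. Note that the paper never performs this step: its statement is to be read (as in \cite{Wil17}, where the classical eigenclass is first scaled into the lattice $V^\alpha$, and as in Propositions \ref{partial} and \ref{full}) at the level of the modules furnished by Theorem \ref{liftingtheorem}; if one insists on the literal full-module integral reading, a genuinely different argument is needed (or one inverts $p$, where $D^\alpha\otimes L=D\otimes L$ and the issue disappears).
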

\begin{mrems}
\begin{itemize}
\item[(i)] In \cite{Wil17}, these results are used to construct $p$-adic $L$-functions for automorphic forms for $\GLt$ over an imaginary quadratic field, in the spirit of \cite{PS11}. In particular, we associate to such an automorphic form a canonical element in the overconvergent cohomology, from which we can very naturally build a ray class distribution that interpolates $L$-values of the automorphic form. It would be interesting to know if similar results existed in the case of $\SLthree$.
\item[(ii)]In the interests of transition to the case of $\SLthree$, we can rephrase the above definitions in a more abstract way. In particular, let $G\defeq \mathrm{Res}_{K/\Q}\GLt$, with Borel subgroup $B$ and opposite Borel $B^{\mathrm{opp}}$. Define $T$ to be the torus, and note we can view $\lambda$ as a dominant weight for $T$, and that $V_k\otimes V_k$ is the representation of $\GLt$ of highest weight $\lambda$ with respect to $B^{\mathrm{opp}}$. Note that for an extension $L/\Qp$, we have  $G(L) \cong \GLt(L)\times\GLt(L)$. Then $\AAk$ is the ring of analytic functions on $B(L)$ that transform like $\lambda$ under multiplication by elements of $T(L)$, whilst $\VAk$ is the ring of analytic functions on $B(L)$ that transform like $\lambda$ under multiplication by elements of $\GLt(L)\times T_{\Q}(L)$, where $T_{\Q}(L)$ is the torus of diagonal matrices in the algebraic group $\GLt/\Q$. In particular, the definitions in the following section are a natural analogue of the theory described concretely above.
\end{itemize}
\end{mrems}

%
%
%
%

\section{Overconvergent modular symbols for $\mathrm{SL}_3$}
We now apply the results above to give a generalisation of the lifting theorem for $\SLthree$ of Pollack and Pollack in \cite{PP09}. We first recall the setting, and also develop the notion of `partially overconvergent' modular symbols for $\SLthree$.

\subsection{Notation}\label{notation}
We recall the setting; where possible, we keep to the notation used by Pollack and Pollack in \cite{PP09} for clarity. For further details, the reader is directed to their paper. Let $G$ be the algebraic group $\mathrm{GL}_3/\Q$, and denote by $B$ (resp.\ $B^{\mathrm{opp}}$) its Borel subgroup of upper-triangular (resp.\ lower-triangular) matrices, with $T$ and $N$ (resp.\ $N^{\mathrm{opp}}$) the subgroups of $B$ (resp.\ $B^{\mathrm{opp}}$) consisting of the diagonal and unipotent matrices respectively. Note that $B = TN$. Let $p$ be a prime, let $\Gamma_0(p)$ be the subgroup of $\SLthree(\Z)$ of matrices that are upper-triangular modulo $p$, and let $\Gamma$ be a congruence subgroup of $\SLthree(\Z)$ contained in $\Gamma_0(p)$.

\subsection{Classical coefficient modules}\label{classicalcoeffs}
Let $\lambda$ be a dominant algebraic character of the torus $T$, which can be seen as an element $\lambda = (k_1,k_2,k_3) \in \Z^3.$ Let $V_\lambda$ be the (unique) representation of $G$ with highest weight $\lambda$ with respect to $B^{\mathrm{opp}}$; for example, when $\lambda = (k,0,0)$, we see that $V_\lambda(A)$ is nothing but $\mathrm{Sym}^k(A^3)$, for a suitable coefficient module $A$. 
\begin{mrem}
We will restrict to the case where $\lambda = (k_1,k_2,0)$, rescaling by the determinant, since this slightly simplifies the calculations. Indeed, any such weight can be written in the form $\lambda = (k_1+v,k_2+v,v)$, and then $V_\lambda \cong V_{\lambda'}\otimes\det^v$, where $\lambda' = (k_1,k_2,0)$. All of our main results then go through in the general case with only slight modification, and indeed, the range of `non-criticality' for the slope for $\lambda'$ is the same as that for $\lambda$ scaled by $v$ in each component.
\end{mrem}

\subsection{Overconvergent coefficient modules}\label{ovcgtcoeffs}
We denote by $\Cp$ the completion of fixed algebraic closure of $\Qp$, and write $\roi_{\Cp}$ for its ring of integers. We now define two different overconvergent coefficient modules corresponding to two different parabolic subgroups of $\SLthree$.

\subsubsection{Overconvergent with respect to $T = \SLo^3$}
We first look at the case where we consider the parabolic subgroup $T = \mathrm{SL}_1\times\mathrm{SL}_1\times\mathrm{SL}_1.$ This identically mirrors the work of Pollack and Pollack in \cite{PP09}. In particular, let $\mathcal{I}$ denote the subgroup of $G(\roi_{\Cp})$ of matrices that are upper-triangular modulo the maximal ideal of $\roi_{\Cp}$. 
$\lb$
We consider continuous function $f: B(\roi_{\Cp}) \rightarrow \roi_{\Cp}$ satisfying the condition
\begin{equation}\label{lambda}
f(tb) = \lambda(t)f(b),\hspace{12pt}t \in T(\roi_{\Cp}), b\in B(\roi_{\Cp})
\end{equation}
We note that any such function is determined by its restriction to $N(\roi_{\Cp})$, and that we can identify $N(\roi_{\Cp})$ with $\roi_{\Cp}^3$ by identifying
\[\begin{pmatrix}1 & x & y\\
0 & 1 & z\\
0 & 0 & 1
\end{pmatrix} \longleftrightarrow (x,y,z) \in \roi_{\Cp}^3.\]
We write $f(x,y,z)$ for the image of this matrix under $f$.
$\lb$
Let $L/\Qp$ be a finite extension with ring of integers $\roi_L$. We say that such a function $f$ is \emph{$L$-rigid analytic} if, for $(x,y,z) \in N(\roi_{\Cp})$, we can write $f$ in the form
\[f(x,y,z) = \sum_{r,s,t\geq 0}c_{rst}x^ry^sz^t,\]
where $c_{rst} \in L$ tends to $0$ as $r+s+t \rightarrow \infty$. Alternatively, this occurs if and only if $f(x,y,z) \in L\langle x,y,z \rangle$, the Tate algebra in three variables over $L$. Writing $\roi_L$ for the ring of integers of $L$, there is likewise an integral version with $c_{rst} \in\roi_L.$ 
\begin{mrem}
Henceforth, we will state all definitions and results in terms of coefficients in $\roi_L$, since in the sequel we use this integrality in an essential way to define filtrations. We could easily instead state the definitions using $L$ in place of $\roi_L$.
\end{mrem}

\begin{mdef}\begin{itemize}\item[(i)]Write $\A_\lambda(\roi_L)$ for the space of $\roi_L$-rigid analytic functions on $B(\roi_{\Cp})$ that satisfy equation (\ref{lambda}).
\item[(ii)] Let $\D_\lambda(\roi_L)$ denote the topological dual
\[\D_\lambda(\roi_L)\defeq \Hom_{\cts}(\A_\lambda(\roi_L),\roi_L)\]
(resp.\ $\Hom_{\cts}(\A_\lambda(\roi_L),\roi_L)$), the space of \emph{rigid analytic distributions on $B(\roi_{\Cp})$ of weight $\lambda$}.
\end{itemize}
\end{mdef}
In an abuse of notation, we write $x^ry^sz^t$ for the unique extension to $B(\roi_{\Cp})$ of the function on $N(\roi_{\Cp})$ that sends
\[\begin{pmatrix}1&x&y\\0&1&z\\0&0&1\end{pmatrix} \longmapsto x^ry^sz^t,\]
and note that any $\mu \in \D_\lambda(\roi_L)$ is uniquely determined by its values at $x^ry^sz^t$ for $r,s,t\geq 0$. Pollack and Pollack call this function $f_{rst}$.

\subsubsection{Overconvergent with respect to $P \defeq \mathrm{SL}_1\times\SLt$}
We now define a different module of overconvergent coefficients. This is, in a sense, a \emph{smaller} module of coefficients, and will play the role of `half-overconvergent' coefficients in the following.
$\lb$
Let $P = \SLo\times\SLt \subset \SLthree$. If $\lambda = (k_1,k_2,0)$ with $k_1\geq k_2$, we get an associated representation 
\[W_\lambda(A) \defeq \mathrm{det}^{k_1}\otimes\hspace{1pt}\mathrm{Sym}^{k_2}(A^2)\]
of $P(A) = \SLo(A)\times\SLt(A)$, for suitable $A$. We can replace $B$ with the larger subgroup $B_1$ of matrices that are block lower-triangular with respect to this parabolic subgroup -- that is, matrices that are zero in the $(2,1)$ and $(3,1)$ entries -- and consider the space of functions $f: B_1(\roi_{\Cp}) \longrightarrow W_\lambda(\roi_{\Cp})$ satisfying the condition 
\[f(tg) = \lambda(t)f(g) \hspace{12pt}\forall t\in P(\roi_{\Cp}),g\in B_1(\roi_{\Cp}), \hspace{20pt}\text{where }\lambda(t)\in\mathrm{GL}(W_\lambda).\]
Note that any such function is entirely determined by its restriction to $B(\roi_{\Cp})$, and indeed by its values on the subgroup
\[\left\{\begin{pmatrix}1 & x & y\\
0 & 1 & 0\\
0 & 0 & 1
\end{pmatrix} \in B_1(\roi_{\Cp})\right\},\]
by a similar argument to before. We say such a function is $\roi_L$-rigid analytic if it is an element of $\roi_L\langle x,y\rangle \otimes_{L}W_\lambda(L)$.
\begin{mdef}
Write $\A_\lambda^P(\roi_L)$ for the space of $\roi_L$-rigid analytic functions on $B_1(\roi_{\Cp})$ that transform like $\lambda$ under elements of $P$.
\end{mdef}
\begin{mprop}
Let $f\in\A_\lambda^P(\roi_L)$. For $g\in B$, let $P_g(X,Y)\defeq f(g) \in W_\lambda(\roi_L),$ where we consider elements of $W_\lambda$ as homogeneous polynomials of degree $k_2$ in two variables over $\roi_L$. Define a function 
\[f':B(\roi_{\Cp}) \longrightarrow \roi_{\Cp}\]
by $f'(g) = P_g(0,1)$. Then $f' \in \A_\lambda(\roi_L)$. Moreover, the association $f \mapsto f'$ gives an isomorphism
\[\A_\lambda^P(\roi_L) \isorightarrow \bigg\{f(x,y,z) = \sum_{r,s,t\geq 0}\alpha_{r,s,t}x^ry^sz^t\in\A_\lambda(\roi_L): \alpha_{r,s,t} = 0 \text{ for }t > k_2\bigg\}.
\]
\end{mprop}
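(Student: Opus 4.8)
The plan is to unwind what the condition $f \in \A_\lambda^P(\roi_L)$ says concretely, identify the underlying function space with $\roi_L\langle x,y\rangle \otimes_L W_\lambda(L)$, and then trace through how the map $f \mapsto f'$ interacts with the moment expansion in the $\A_\lambda(\roi_L)$ picture. First I would observe, as in the discussion preceding the statement, that an element $f \in \A_\lambda^P(\roi_L)$ is entirely determined by its restriction to the subgroup $\left\{\smallmatrd{1}{x}{y}{0}{1}{0} \text{-type matrices}\right\}$ with third coordinate $z = 0$; this is because the condition $f(tg) = \lambda(t)f(g)$ for $t \in P(\roi_{\Cp})$ lets us absorb any element of the $\SLt$-block (which governs the $z$-variable), leaving only the $x,y$ dependence. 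So the data of $f$ is exactly a power series $\sum_{r,s \geq 0} c_{rs} x^r y^s$ with $c_{rs} \in W_\lambda(\roi_L)$, i.e. $c_{rs}$ is a homogeneous polynomial $P_{rs}(X,Y)$ of degree $k_2$; writing $P_{rs}(X,Y) = \sum_{j=0}^{k_2} a_{rsj} X^{k_2-j}Y^j$, the full data is the collection $(a_{rsj})_{r,s \geq 0,\, 0 \leq j \leq k_2}$ with $a_{rsj} \in \roi_L$ tending to $0$ as $r+s \to \infty$.

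Next I would make precise how $f' \in \A_\lambda(\roi_L)$ is recovered and, conversely, how $f$ is reconstructed from $f'$, exhibiting the mutual inverses that give the isomorphism. By definition $f'(g) = P_g(0,1)$ extracts the $Y^{k_2}$-coefficient; but this alone would lose information, so the key point is that the $\SLt$-transformation property lets one recover all the other coefficients $a_{rsj}$ for $j < k_2$ from the values of $f'$ on a larger part of $B(\roi_{\Cp})$ — specifically, acting by the unipotent element with parameter $z$ in the $\SLt$-block and re-evaluating $f'$ produces the generating function $P_{rs}(z,1) = \sum_j a_{rsj} z^{k_2-j}$ (up to reindexing), so the $z$-expansion of $f'$ literally reads off the $X,Y$-coefficients of the $P_{rs}$. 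This is precisely why the image consists of those $f(x,y,z) \in \A_\lambda(\roi_L)$ whose $z$-degree is bounded by $k_2$: the $z$-variable in the $\A_\lambda$-model is playing the role of the (finite-dimensional, degree-$\leq k_2$) variable in $W_\lambda$. I would check that $f \mapsto f'$ and the reconstruction map $g \mapsto (g$ read off as above$)$ are mutually inverse $\roi_L$-module homomorphisms, and that both preserve rigid-analyticity (the $\roi_L$-integrality and the $r+s \to \infty$ decay on the $x,y$-coefficients transfer directly, since only finitely many $z$-powers are involved).

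The main obstacle I expect is bookkeeping the two transformation laws simultaneously: in the $\A_\lambda^P$-model one quotients $B_1(\roi_{\Cp})$ by the full parabolic $P(\roi_{\Cp}) = \SLo \times \SLt$, whereas in the $\A_\lambda$-model one quotients $B(\roi_{\Cp})$ by the smaller torus $T(\roi_{\Cp})$, so matching coordinates requires carefully choosing compatible sections of $N(\roi_{\Cp}) \to B(\roi_{\Cp})/T(\roi_{\Cp})$ and of the analogous quotient on the $P$-side, and verifying that the $\SLt$-block action on $W_\lambda$ under the chosen identification $W_\lambda \cong \{\text{degree-}k_2 \text{ polys}\}$ really does correspond to the $z$-translation action on monomials $z^t$, $t \leq k_2$. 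Once that dictionary is fixed, the rest is routine: the claimed description of the image drops out of the degree bound $t \leq k_2$, and the $\det^{k_1}$ twist in $W_\lambda$ is absorbed uniformly and matches the weight $\lambda = (k_1,k_2,0)$ on both sides, so it does not affect the coefficient-level comparison.
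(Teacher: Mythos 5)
Your proposal takes essentially the same route as the paper: decompose a general unipotent element as the $z$-translation in the $\SLt$-block times the $(x,y)$-part, use the $P$-transformation law to obtain $f'(x,y,z)=P_{(x,y)}(z,1)$, and read off rigidity, the degree bound $t\le k_2$, injectivity and the description of the image directly from that formula. The only step the paper writes out that you merely flag as transformation-law bookkeeping is the short check $f'(tg)=t_1^{k_1}t_2^{k_2}f'(g)$ for $t=(t_1,t_2,t_3)\in T(\roi_{\Cp})$, showing $f'$ genuinely lies in $\A_\lambda(\roi_L)$; this is exactly the routine verification you identify, so your plan is sound.
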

\begin{proof}
Firstly, note that $f'$ is rigid analytic in three variables. In particular, let
\[g \defeq \begin{pmatrix}1 & x & y\\
0 & 1 & 0\\
0 & 0 & 1
\end{pmatrix} \hspace{12pt}\text{and}\hspace{12pt}
P_g(X,Y) = \sum_{t=1}^{k_2}\sum_{r,s\geq 0}\alpha_{r,s,t}x^ry^sX^tY^{k_2-t},\]
using rigidity of $f$. Then consider
\[g'\defeq\begin{pmatrix}1 & x & y\\
0 & 1 & z\\
0 & 0 & 1
\end{pmatrix} = \begin{pmatrix}1 & 0 & 0\\
0 & 1 & z\\
0 & 0 & 1
\end{pmatrix}\begin{pmatrix}1 & x & y\\
0 & 1 & 0\\
0 & 0 & 1
\end{pmatrix}.\]
Recall that $\GLt(L)$ acts on $W_\lambda(L)$ by $w|\smallmatrd{a}{b}{c}{d}(X,Y) = w(bY+dX,aY+cX),$ so that 
\begin{equation}\label{image}
f'(x,y,z) = P_{g'}(0,1) = P_g(X+z,Y)\bigg|_{X=0,Y=1} = P_g(z,1) = \sum_{t=1}^{k_2}\sum_{r,s\geq 0}\alpha_{r,s,t}x^ry^sz^t.
\end{equation}
The rigidity follows. Now we show that $f'$ transforms under $T$ as $\lambda$. Let $g\in B(\roi_{\Cp})$ and $t = (t_1,t_2,t_3) \in T(\roi_{\Cp})$. Then compute
\[P_{tg}(X,Y) = f(tg)(X,Y) = t_1^{k_1}f(g)(t_3X, t_2Y) = t_1^{k_1}P_g(t_3X,t_2Y).\]
Accordingly, we have 
\begin{align*}f'(tg) = P_{tg}(0,1) = t_1^{k_1}P_g(0,t_2) = t_1^{k_1}t_2^{k_2}P_g(0,1)= \lambda(t)f'(g),
\end{align*}
as required.
$\lb$
Finally, it remains to show that the map induces the stated isomorphism. From equation (\ref{image}), it is clear that $f' = 0$ if and only if $f = 0$, so that the association $f\mapsto f'$ is injective. It is also clear that the image is the right-hand side of the isomorphism. This completes the proof.
\end{proof}
\begin{mdef}\begin{itemize}
\item[(i)]From now on, in an abuse of notation using this isomorphism, we write $\A_\lambda^P(\roi_L)$ for this subspace of $\A_\lambda(\roi_L)$.
\item[(ii)] Let $\D^P_\lambda(\roi_L)$ denote the topological dual
\[\D^P_\lambda(\roi_L)\defeq \Hom_{\cts}(\A^P_\lambda(\roi_L),\roi_L),\]
the space of \emph{rigid analytic distributions on $B(\roi_{\Cp})$ of weight $\lambda$ over $\roi_L$}.
\end{itemize}
\end{mdef}
Note that by dualising the inclusion $\A_\lambda^P(\roi_L)\subset \A_\lambda(\roi_L)$, we get a surjective map 
\[\pr_\lambda^2: \D_\lambda(\roi_L) \longrightarrow \D_\lambda^P(\roi_L),\]
where the notation will become clear in the sequel.
\begin{mremnum}\label{pi}
Note that $\D_\lambda^P(\roi_L)$ is, in a sense, `partially' overconvergent, in the sense that it is overconvergent in the variables $x,y$ and classical in $z$. In the next section, we will introduce operators 
\[\pi_1 \defeq \begin{pmatrix}1 & 0 & 0\\
0 & p & 0\\
0 & 0 & p
\end{pmatrix}\hspace{12pt}\text{and}\hspace{12pt}\pi_2 \defeq \begin{pmatrix}1 & 0 & 0\\
0 & 1 & 0\\
0 & 0 & p
\end{pmatrix},\]
whose product is the element $\pi$ considered by Pollack and Pollack in \cite{PP09}. We will ultimately lift a classical modular symbol to one that takes values in $\D_\lambda^P(\roi_L)$ using $\pi_1$, and then lift this further to a symbol that takes values in the space $\D_\lambda(\roi_L)$ of fully overconvergent coefficients using $\pi_2$.
\end{mremnum}

\subsection{The action of $\Sigma$ and specialisation}
\subsubsection{The weight $\lambda$ action}
Let $\mathscr{X}$ denote the image of the Iwahori group $\mathcal{I}$ in $N^{\mathrm{opp}}(\roi_{\Cp})\backslash G(\roi_{\Cp})$ under the natural embedding, and note that we can identify $\mathscr{X}$ with $B(\roi_{\Cp})$ in a natural way. Let 
\[I \defeq \mathcal{I}\cap \SLthree(\Z). \]
(Note that $I=\Gamma_0(p)$ in this setting, though we retain the notation for ease of comparison with Pollack and Pollack.) We also define $\pi_1$ and $\pi_2$ as in Remark \ref{pi}, and let $\Sigma$ be the semigroup generated by $I$, $\pi_1$ and $\pi_2$.
$\lb$
Note that $I$ acts on $N^{\mathrm{opp}}(\roi_{\Cp})\backslash G(\roi_{\Cp})$ by right multiplication, and as $\pi$ normalises $N^{\mathrm{opp}}$, we also have a right action of $\pi$ on this space by
\[N^{\mathrm{opp}}(\roi_{\Cp})g|\pi = N^{\mathrm{opp}}(\roi_{\Cp})\pi^{-1}g\pi.\]
Thus we have an action of $\Sigma$ on this space. This action preserves $\mathscr{X}$ and hence gives rise to a right action of $\Sigma$ on $B(\roi_{\Cp})$. This in turn gives a left action of $\Sigma$ on $\A_\lambda(\roi_L)$ by $\gamma\cdot f(b) = f(b|\gamma),$ and dually a right action of $\Sigma$ on $\D_\lambda(\roi_L)$ by $\mu|\gamma(f) = \mu(\gamma\cdot f).$ In \cite{PP09}, Lemma 2.1, Pollack and Pollack give an explicit description of this action. We recap their results: 
\begin{mlem}\label{explicitaction}\begin{itemize}
\item[(i)]
Let $\lambda = (k_1,k_2,0)$. For $\gamma \in I$, the weight $\lambda$ action of $\gamma$ on $f\in\A_\lambda(\roi_L)$ is given by
\begin{align*}(\gamma f))(x,y,z) = &(a_{11}+a_{21}x + a_{31}y)^{k_1-k_2}(m_{33}-m_{13}y-m_{23}z +m_{13}xz)^{k_2} \times\\
&f\bigg(\frac{a_{12}+a_{22}x+a_{32}y}{a_{11}+a_{21}x + a_{31}y},\frac{a_{13}+a_{23}x+a_{33}y}{a_{11}+a_{21}x + a_{31}y}, \\
&\hspace{100pt}\frac{-m_{32} + m_{12}y + m_{22}z - m_{12}xz}{m_{33}-m_{13}y-m_{23}z+m_{13}xz}\bigg),
\end{align*}
where $\gamma = (a_{ij})$ and $m_{ij}$ is the $(i,j)$th minor of $\gamma$.
\item[(ii)]We have 
\[\pi_1\cdot f(x,y,z) = f(px,py,z)\]
and
\[\pi_2\cdot f(x,y,z) = f(x,py,pz).\]
\end{itemize}
\end{mlem}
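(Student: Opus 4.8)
The plan is to unwind the weight $\lambda$ action through the $N^{\mathrm{opp}}TN$ (Gauss) decomposition of matrices over $\roi_{\Cp}$, exactly as in \cite{PP09}, Lemma 2.1, specialising to $\lambda = (k_1,k_2,0)$ and to the generators $\pi_1,\pi_2$ of $\Sigma$ in place of their product $\pi$. For part (i): any $f \in \A_\lambda(\roi_L)$ is determined by its restriction to $N(\roi_{\Cp})$, so it is enough to compute $(\gamma\cdot f)$ at the unipotent matrix $n(x,y,z) \defeq \threematrix{1}{x}{y}{1}{z}{1}$, whose class in $\mathscr{X}\cong B(\roi_{\Cp})$ is the point $(x,y,z)$. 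By definition of the right $I$-action, $n(x,y,z)|\gamma$ is the unique element of $\mathscr{X}$ in the coset $N^{\mathrm{opp}}(\roi_{\Cp})\,n(x,y,z)\gamma$, so the first step is to write
\[ n(x,y,z)\,\gamma \;=\; n^{-}\cdot t\cdot n(x',y',z'), \qquad n^{-}\in N^{\mathrm{opp}}(\roi_{\Cp}),\ t\in T(\roi_{\Cp}),\ n(x',y',z')\in N(\roi_{\Cp}), \]
which exists because $\gamma\in I$ is upper-triangular modulo the maximal ideal of $\roi_{\Cp}$, so that the leading minors of $n(x,y,z)\gamma$ are units. Then equation (\ref{lambda}) immediately gives $(\gamma\cdot f)(x,y,z) = \lambda(t)\,f(x',y',z')$, and writing $t = \mathrm{diag}(t_1,t_2,t_3)$ and using $\lambda = (k_1,k_2,0)$ this equals $t_1^{\,k_1-k_2}(t_1t_2)^{k_2}\,f(x',y',z')$.

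Everything then reduces to a direct matrix computation of $t_1$, $t_1t_2$ and $x',y',z'$. Since left multiplication by $N^{\mathrm{opp}}$ leaves the first row of $M\defeq n(x,y,z)\gamma$ unchanged, comparing first rows forces $t_1 = M_{11} = a_{11}+a_{21}x+a_{31}y$ and $x' = M_{12}/M_{11}$, $y' = M_{13}/M_{11}$, which are precisely the first two arguments in the statement. The standard fact that for $g = n^{-}tn'$ the leading $k\times k$ minor of $g$ equals $t_1\cdots t_k$ (and a one-line variant for the off-diagonal entry) then identifies $t_1t_2$ with the leading $2\times 2$ minor of $M$ and $z'$ with the ratio of the $(\{1,2\},\{1,3\})$-minor of $M$ to its leading $2\times 2$ minor. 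Expanding these $2\times 2$ minors of $n(x,y,z)\gamma$ in the variables $x,y,z$, the coefficients reassemble into the $2\times 2$ minors $m_{ij}$ of $\gamma$ (with constant terms $m_{33}$ and $-m_{32}$), giving $t_1t_2 = m_{33}-m_{13}y-m_{23}z+m_{13}xz$ and the displayed formula for $z'$; substituting into $t_1^{\,k_1-k_2}(t_1t_2)^{k_2}f(x',y',z')$ produces the formula in the statement. This bookkeeping of minors and their signs is the only real work and is purely mechanical; it is the ``main obstacle'' only in the sense that it is easy to slip on a sign.

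Part (ii) is an immediate conjugation: since $\pi$ (hence each $\pi_i$) normalises $N^{\mathrm{opp}}$, the action on $\mathscr{X}\cong B(\roi_{\Cp})$ is $n(x,y,z)|\pi_i = \pi_i^{-1}\,n(x,y,z)\,\pi_i$, and conjugating the upper unipotent $n(x,y,z)$ by $\pi_1 = \mathrm{diag}(1,p,p)$, resp.\ $\pi_2 = \mathrm{diag}(1,1,p)$, yields again an upper unipotent matrix, namely $n(px,py,z)$, resp.\ $n(x,py,pz)$, with trivial torus part. Hence $(\pi_1\cdot f)(x,y,z) = f(px,py,z)$ and $(\pi_2\cdot f)(x,y,z) = f(x,py,pz)$. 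As a consistency check, $\pi_1\pi_2 = \pi$, and composing the two formulae recovers $(\pi\cdot f)(x,y,z) = f(px,p^2y,pz)$, the operator of \cite{PP09}.
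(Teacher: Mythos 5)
Your proposal is correct and follows essentially the paper's route: part (ii) is exactly the paper's conjugation computation $\pi_i^{-1}n(x,y,z)\pi_i$, and for part (i) you simply carry out the Iwahori/Gauss-decomposition calculation that the paper cites from Lemma 2.1 of \cite{PP09}, identifying $t_1$, $t_1t_2$ and $x',y',z'$ with the appropriate $1\times 1$ and $2\times 2$ minors of $n(x,y,z)\gamma$ (your sign bookkeeping matches the stated formula once $m_{ij}$ is read with the convention of \cite{PP09}). The only difference is that you reprove the cited computation rather than quoting it, which makes the argument self-contained but is not a different method.
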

\begin{proof}
For part (i), see \cite{PP09}, Lemma 2.1. For part (ii), this is easily checked by computing
\[\pi_1^{-1}\begin{pmatrix}1&x&y\\0&1&z\\0&0&1\end{pmatrix}\pi_1 = \begin{pmatrix}1&px&py\\0&1&z\\0&0&1\end{pmatrix}.\]
The case of $\pi_2$ is done similarly.
\end{proof}
\begin{mprop}
The action of $\Sigma$ preserves the subspace $\A_\lambda^P(\roi_L)$ of $\A_\lambda(\roi_L)$.
\end{mprop}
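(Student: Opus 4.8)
The plan is to work with the concrete description of $\A_\lambda^P(\roi_L)$ supplied by the isomorphism of the preceding proposition, under which $\A_\lambda^P(\roi_L)$ is identified with the subspace of those $f(x,y,z)=\sum_{r,s,t\ge 0}\alpha_{r,s,t}x^ry^sz^t$ in $\A_\lambda(\roi_L)$ with $\alpha_{r,s,t}=0$ whenever $t>k_2$ — equivalently, the functions that are polynomial of degree at most $k_2$ in $z$. Since $\Sigma$ is generated as a semigroup by $I$, $\pi_1$ and $\pi_2$, and since $\Sigma$ already acts on $\A_\lambda(\roi_L)$, it suffices to check that each of these three generators sends a function of $z$-degree $\le k_2$ to another such function. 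For $\pi_1$ and $\pi_2$ this is immediate from Lemma~\ref{explicitaction}(ii): we have $(\pi_1\cdot f)(x,y,z)=f(px,py,z)$ and $(\pi_2\cdot f)(x,y,z)=f(x,py,pz)$, and substituting $z$ or $pz$ for $z$ does not raise the degree in $z$.

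The remaining case $\gamma=(a_{ij})\in I=\Gamma_0(p)$ is handled by the explicit formula of Lemma~\ref{explicitaction}(i). Abbreviate the numerator and denominator of the transformed third coordinate by
\begin{align*}
C &= -m_{32}+m_{12}y+m_{22}z-m_{12}xz = (-m_{32}+m_{12}y)+(m_{22}-m_{12}x)z,\\
D &= m_{33}-m_{13}y-m_{23}z+m_{13}xz = (m_{33}-m_{13}y)+(-m_{23}+m_{13}x)z,
\end{align*}
and let $A,B$ denote the transformed first and second coordinates; the key structural point is that $A$ and $B$ involve only $x$ and $y$, while $C$ and $D$ are affine-linear in $z$ (with coefficients in $\roi_{\Cp}\langle x,y\rangle$). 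Because $\gamma\in\Gamma_0(p)$, the leading terms $a_{11}$ and $m_{33}$ are $p$-adic units while the remaining coefficients of $a_{11}+a_{21}x+a_{31}y$ and of $D$ lie in $p\Zp$; hence both $a_{11}+a_{21}x+a_{31}y$ and $D$ are units in the Tate algebra $\roi_{\Cp}\langle x,y,z\rangle$, so the substitutions make sense.

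Now write $f=\sum_{t=0}^{k_2}g_t(x,y)z^t$, where each $g_t$ is rigid analytic in $x,y$ alone. Then Lemma~\ref{explicitaction}(i), together with the fact that the weight factor equals $(a_{11}+a_{21}x+a_{31}y)^{k_1-k_2}D^{k_2}$, gives
\begin{align*}
(\gamma f)(x,y,z) &= (a_{11}+a_{21}x+a_{31}y)^{k_1-k_2}\,D^{k_2}\,f\!\left(A,\,B,\,\tfrac{C}{D}\right)\\
&= (a_{11}+a_{21}x+a_{31}y)^{k_1-k_2}\sum_{t=0}^{k_2}g_t(A,B)\,C^tD^{k_2-t}.
\end{align*}
The factor $(a_{11}+a_{21}x+a_{31}y)^{k_1-k_2}$ and the functions $g_t(A,B)$ do not involve $z$, and each $C^tD^{k_2-t}$ has $z$-degree at most $t+(k_2-t)=k_2$; hence the right-hand side, and therefore $\gamma f$, is polynomial of degree at most $k_2$ in $z$. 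Since $\gamma f$ already lies in $\A_\lambda(\roi_L)$ by the standing setup, this shows $\gamma f\in\A_\lambda^P(\roi_L)$, completing the proof.

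The proof is essentially a direct computation, so there is no serious obstacle; the one point that needs a word of care is the passage from the displayed identity to a statement about the rigid-analytic expansion of $\gamma f$. One must know that the sum $\sum_t g_t(A,B)C^tD^{k_2-t}$ genuinely converges and represents the same element of $\A_\lambda(\roi_L)$, so that the bound on the $z$-degree can be read off coefficient by coefficient. This follows from the invertibility of $D$ and of $a_{11}+a_{21}x+a_{31}y$ in $\roi_{\Cp}\langle x,y,z\rangle$ — which is exactly the $\Gamma_0(p)$-integrality used above — together with $\alpha_{r,s,t}\to 0$ and the boundedness of $A,B,C,D$. (Conceptually the same fact is visible from the intrinsic description of $\A_\lambda^P$ as functions on $B_1(\roi_{\Cp})$ transforming under $P$ by $\lambda$, but the coordinate computation is the most direct route.)
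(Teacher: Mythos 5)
Your proof is correct and follows essentially the same route as the paper: both arguments identify $\A_\lambda^P(\roi_L)$ with the functions of $z$-degree at most $k_2$ inside $\A_\lambda(\roi_L)$ and then read off from the explicit formula of Lemma \ref{explicitaction} that the factor $C^tD^{k_2-t}$ (numerator to the $t$, denominator to the $k_2-t$) keeps the $z$-degree bounded by $k_2$, while $\pi_1,\pi_2$ obviously preserve it. The only difference is presentational — the paper checks this on the spanning monomials $x^ry^sz^t$, $t\le k_2$, and declares it clear, whereas you carry out the computation for a general $f$ and note the convergence point explicitly.
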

\begin{proof}
The space $\A_\lambda^P(\roi_L)$ is the span of the functions $x^ry^sz^t$ with $t\leq k_2$ (under suitable restrictions on the coefficients). So it suffices to show that $\gamma \cdot x^ry^sz^t$ lies in this span. But from Lemma \ref{explicitaction} above, this is clear.
\end{proof}
\begin{mcor}
The map $\D_\lambda(\roi_L) \rightarrow \D_\lambda^P(\roi_L)$ given by dualising the inclusion is equivariant with respect to the action of $\Sigma$.
\end{mcor}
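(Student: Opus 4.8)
The plan is to obtain this as a formal consequence of the preceding proposition, via the standard principle that the transpose of a $\Sigma$-equivariant continuous map of coefficient modules is again $\Sigma$-equivariant. Recall that $\pr_\lambda^2$ is by definition the restriction map: it sends $\mu\in\D_\lambda(\roi_L)$ to $\mu\circ\iota\in\D_\lambda^P(\roi_L)$, where $\iota\colon\A_\lambda^P(\roi_L)\hookrightarrow\A_\lambda(\roi_L)$ is the inclusion. This is well-defined because $\iota$ is a continuous $\roi_L$-linear inclusion, so precomposition with $\iota$ carries $\Hom_{\cts}(\A_\lambda(\roi_L),\roi_L)$ into $\Hom_{\cts}(\A_\lambda^P(\roi_L),\roi_L)$.

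Next I would unwind the two right $\Sigma$-actions. Fix $\gamma\in\Sigma$, $\mu\in\D_\lambda(\roi_L)$ and $f\in\A_\lambda^P(\roi_L)$. On one side,
\[
\big(\pr_\lambda^2(\mu|\gamma)\big)(f) \;=\; (\mu|\gamma)(\iota f) \;=\; \mu\big(\gamma\cdot\iota f\big),
\]
using the definition of the $\Sigma$-action on $\D_\lambda(\roi_L)$. On the other side,
\[
\big(\pr_\lambda^2(\mu)|\gamma\big)(f) \;=\; \big(\pr_\lambda^2(\mu)\big)(\gamma\cdot f) \;=\; \mu\big(\iota(\gamma\cdot f)\big),
\]
using the definition of the $\Sigma$-action on $\D_\lambda^P(\roi_L)$. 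The preceding proposition enters precisely here: it guarantees both that $\gamma\cdot f$ again lies in $\A_\lambda^P(\roi_L)$ — so that the second display is even meaningful — and that $\iota$ intertwines the $\Sigma$-actions, i.e.\ $\gamma\cdot\iota f=\iota(\gamma\cdot f)$ inside $\A_\lambda(\roi_L)$. The two displayed quantities therefore agree for all $f$, whence $\pr_\lambda^2(\mu|\gamma)=\pr_\lambda^2(\mu)|\gamma$, which is the assertion.

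Since every step is a manipulation of definitions, there is no genuine obstacle; the only nontrivial input is the $\Sigma$-stability of $\A_\lambda^P(\roi_L)$, already established, which itself reduces via Lemma \ref{explicitaction} to the observation that a generator of $\Sigma$ sends a monomial $x^ry^sz^t$ with $t\leq k_2$ to a rigid analytic function whose degree in $z$ is still at most $k_2$: for $\pi_1,\pi_2$ this is immediate from part (ii), since the variables are merely rescaled, and for $\gamma\in I$ it follows from part (i), since in $(\gamma f)(x,y,z)$ the variable $z$ occurs only through the factor $(m_{33}-m_{13}y-m_{23}z+m_{13}xz)^{k_2}$ and the $z$-powers coming from the third argument of $f$, whose denominator is cancelled by that factor because $t\leq k_2$.
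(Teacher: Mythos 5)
Your argument is correct and is exactly the one the paper intends: the corollary is stated without proof precisely because it follows formally from the preceding proposition ($\Sigma$-stability of $\A_\lambda^P(\roi_L)$, so that the inclusion $\iota$ intertwines the actions) by unwinding the dual actions, which is what you do. Your closing recapitulation of why $\gamma\cdot x^ry^sz^t$ keeps $z$-degree at most $k_2$ matches the paper's proof of that proposition as well.
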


\subsubsection{Specialisation to weight $\lambda$}\label{specialisation}
We want to exhibit a map from overconvergent to classical coefficients, which we'll call \emph{specialisation to weight $\lambda$}. To this end, let $v_\lambda$ be a highest weight vector in $V_\lambda(\roi_L)$ (which we take to be a \emph{right} representation of $G$). More precisely, this is an element satisfying
\[v_\lambda|t = \lambda(t)v_\lambda \hspace{4pt}\forall t \in T(\roi_L), \hspace{12pt}v_\lambda|n = v_\lambda \hspace{4pt}\forall n \in N^{\mathrm{opp}}(\roi_L).\]
In particular, we can define a map 
\begin{align*}
f_\lambda: G(\roi_L) &\longrightarrow V_\lambda(\roi_L)\\
g &\longmapsto v_\lambda|g.
\end{align*}
Since we have invariance under $N^{\mathrm{opp}}$, this function descends to $N^{\mathrm{opp}}\backslash G$. We can then restrict this function to (the $\roi_L$-points of) $\mathscr{X}$.
\begin{mlem}
Let $\lambda = (k_1,k_2,0)\in\Z^3.$ Then $V_\lambda(\roi_L)$ can be realised as a subrepresentation of $\mathrm{Sym}^{k_1}(\roi_L^3)\otimes \mathrm{Sym}^{k_2}(\roi_L^3)$, and the highest weight vector is
\[v_\lambda = \sum_{i=0}^{k_2}(-1)^i\binomc{k_1}{i}X^{k_1-i}Y^i\otimes U^iV^{k_2-i},\]
where a general element has form $\sum P(X,Y,Z)\otimes Q(U,V,W).$
\end{mlem}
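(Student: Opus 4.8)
The plan is to exhibit the stated $v_\lambda$ as a highest weight vector directly, and then to conclude, from the multiplicity-one occurrence of $V_\lambda$ inside $\mathrm{Sym}^{k_1}(\roi_L^3)\otimes\mathrm{Sym}^{k_2}(\roi_L^3)$, that the $G$-submodule generated by $v_\lambda$ is a copy of $V_\lambda$.

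First I would pin down the action of $G$ on each symmetric power $\mathrm{Sym}^{k_i}(\roi_L^3)$ in the coordinates $X,Y,Z$ (resp.\ $U,V,W$), following the conventions of \cite{PP09} recalled in Lemma \ref{explicitaction}. The weight of $v_\lambda$ is then immediate: the monomial $X^{k_1-i}Y^i\otimes U^iV^{k_2-i}$ has $T$-weight $(k_1-i)\,\mathrm{wt}(X)+i\,\mathrm{wt}(Y)+i\,\mathrm{wt}(U)+(k_2-i)\,\mathrm{wt}(V)$, and since $\mathrm{wt}(Y)-\mathrm{wt}(X)=\mathrm{wt}(V)-\mathrm{wt}(U)$ this equals $\lambda$ for every $i$; so $v_\lambda$ is a $\lambda$-weight vector, and it is nonzero because its monomials are $\roi_L$-linearly independent.

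The main step is to verify that $v_\lambda$ is fixed by $N^{\mathrm{opp}}(\roi_L)$. As $v_\lambda$ involves only the variables $X,Y,U,V$, it suffices to check invariance under the root subgroups of $N^{\mathrm{opp}}$ moving these, and one may treat a single one-parameter subgroup at a time. Applying such an element, expanding by the binomial theorem, and collecting the coefficient of a fixed monomial $X^{p}Y^{k_1-p}\otimes U^{q}V^{k_2-q}$, one finds that the power of the unipotent parameter attached to that monomial depends only on $(p,q)$ and not on the summation index; the required invariance then reduces to the vanishing of sums of the shape $\sum_i (-1)^i\binom{k_1}{i}\binom{i}{j}\binom{k_2-i}{l}$ for $j+l\geq 1$, while the $j=l=0$ terms reproduce exactly the coefficients $(-1)^{k_1-p}\binom{k_1}{p}$ of $v_\lambda$. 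These vanishings are a standard consequence of the Chu--Vandermonde identity, i.e.\ of the vanishing of sufficiently high finite differences of low-degree polynomials. This combinatorial bookkeeping, together with transcribing \cite{PP09}'s $N^{\mathrm{opp}}$-action correctly (in particular getting right which unipotent fixes the highest weight vector, and how $\SLthree$ acts on the symmetric powers via minors), is where essentially all the work lies.

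To finish: by Pieri's rule, $\mathrm{Sym}^{k_1}(\roi_L^3)\otimes\mathrm{Sym}^{k_2}(\roi_L^3)$ contains the irreducible $V_{(k_1,k_2,0)}$ with multiplicity one, so over $L$ its space of $N^{\mathrm{opp}}$-fixed vectors of weight $\lambda$ is one-dimensional, and a nonzero such vector generates a copy of $V_\lambda$. Since $v_\lambda$ is such a vector with $\roi_L$-coefficients, the $G$-submodule it generates realises $V_\lambda(\roi_L)$ inside $\mathrm{Sym}^{k_1}(\roi_L^3)\otimes\mathrm{Sym}^{k_2}(\roi_L^3)$ — matching the definition of $V_\lambda(\roi_L)$ in Section \ref{classicalcoeffs} as the $G$-stable $\roi_L$-lattice generated by a highest weight vector — with highest weight vector $v_\lambda$. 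The integrality of the statement causes no trouble here, since the coefficients $\pm\binom{k_1}{i}$ already lie in $\Z$.
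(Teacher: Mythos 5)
Your overall architecture (weight computation, reduction to the root subgroups of $N^{\mathrm{opp}}$, Pieri/multiplicity one to identify the submodule generated by $v_\lambda$) is reasonable, and note that the paper itself gives no argument here --- its proof is only the citation to \cite{PP09}, Remark 2.4.3 --- so a direct verification is a sensible thing to attempt. However, the central combinatorial step of your verification fails. Under the conventions actually in force in this paper (the right action by substitution visible in Lemma \ref{explicitaction} and in equation (\ref{rholambda1}), under which the lower unipotent with $(2,1)$-entry $a$ sends $Y\mapsto Y+aX$, $V\mapsto V+aU$ and fixes $X,U$, while the other two root subgroups only move $Z,W$), the vector $\sum_{i=0}^{k_2}(-1)^i\binom{k_1}{i}X^{k_1-i}Y^i\otimes U^iV^{k_2-i}$ is \emph{not} $N^{\mathrm{opp}}$-fixed once $k_1>k_2$. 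Concretely, for $\lambda=(2,1,0)$ the stated vector is $X^2\otimes V-2XY\otimes U$, and the substitution above sends it to $X^2\otimes V-2XY\otimes U-aX^2\otimes U$. Correspondingly, the sums you claim vanish do not: the coefficient of $a$ on $X^2\otimes U$ is $\sum_{i=0}^{1}(-1)^i\binom{2}{i}\binom{i}{i}\binom{1-i}{1-i}=1-2=-1$. The Chu--Vandermonde/finite-difference argument does not apply because the summation index runs only over $0\le i\le k_2$, not $0\le i\le k_1$, so these sums are not $k_1$-th finite differences of low-degree polynomials, and no identity can rescue them.

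What first-order invariance under the $(2,1)$ root subgroup actually forces is the recursion $(j+1)c_{j+1}+(k_2-j)c_j=0$ on the coefficients $c_i$ of $X^{k_1-i}Y^i\otimes U^iV^{k_2-i}$, whose solution (up to scalar) is $c_i=(-1)^i\binom{k_2}{i}$. This is the coefficient the paper itself uses in equation (\ref{rholambda1}), so the $\binom{k_1}{i}$ in the statement of the lemma should be read as $\binom{k_2}{i}$ (the statement and (\ref{rholambda1}) are inconsistent as printed); your proof needs to target the corrected vector. With that correction your plan goes through, and the unipotent invariance becomes transparent with no binomial bookkeeping: the corrected $v_\lambda$ is the bidegree-$(k_1,k_2)$ expansion of $X^{k_1-k_2}(XV-YU)^{k_2}$, and $X$, $U$ and $XV-YU$ are each fixed by the simultaneous lower-unitriangular substitutions. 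The remaining steps --- each summand has $T$-weight $\lambda$; by Pieri the constituent of highest weight $(k_1,k_2,0)$ occurs with multiplicity one, so the weight-$\lambda$ $N^{\mathrm{opp}}$-fixed line is spanned by $v_\lambda$ over $L$; and the $\roi_L[G]$-submodule generated by $v_\lambda$ gives an integral realisation --- are fine, with the minor caveat that the paper never specifies which lattice it means by $V_\lambda(\roi_L)$, so ``realised as a subrepresentation'' is all one can, and need, claim.
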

\begin{proof}
See \cite{PP09}, Remark 2.4.3.
\end{proof}
\begin{mprop}
We have $f_\lambda\bigg|_{\mathscr{X}} \in \A^P_\lambda(\roi_L)\otimes V_\lambda(\roi_L)$.
\end{mprop}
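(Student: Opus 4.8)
The plan is to compute $f_\lambda$ explicitly on $\mathscr{X}$ and to verify the three conditions characterising membership in $\A^P_\lambda(\roi_L)\otimes V_\lambda(\roi_L)$: that $f_\lambda\big|_{\mathscr{X}}$ is $\roi_L$-rigid analytic and $V_\lambda(\roi_L)$-valued, that it transforms like $\lambda$ under $T(\roi_{\Cp})$, and --- the only substantive point --- that its coordinate expansion in the variables $(x,y,z)$ on $N(\roi_{\Cp})$ has degree at most $k_2$ in $z$, so that it lands in the subspace $\A^P_\lambda(\roi_L)$ of $\A_\lambda(\roi_L)$ identified earlier.

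First I would reduce to the unipotent radical. Writing $\mathscr{X}\cong B(\roi_{\Cp})=T(\roi_{\Cp})N(\roi_{\Cp})$ and using $v_\lambda|t=\lambda(t)v_\lambda$, one has $f_\lambda(tn)=\lambda(t)(v_\lambda|n)$ for $t\in T(\roi_{\Cp})$ and $n\in N(\roi_{\Cp})$; this already establishes the $T$-equivariance, and since $G$ acts on $V_\lambda$ algebraically and over $\roi_L$ (the explicit $v_\lambda$ of the previous lemma has integer coefficients), the assignment $n(x,y,z)\mapsto v_\lambda|n(x,y,z)$ is given by a polynomial in $x,y,z$ with coefficients in $V_\lambda(\roi_L)$, hence is $\roi_L$-rigid analytic and $V_\lambda(\roi_L)$-valued. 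Everything therefore comes down to bounding the degree of this polynomial in $z$.

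To bound the $z$-degree I would factor the unipotent matrix. A direct multiplication gives
\[
\begin{pmatrix}1 & x & y\\ 0 & 1 & z\\ 0 & 0 & 1\end{pmatrix}=E_{23}(z)\,E_{13}(y)\,E_{12}(x),
\]
where $E_{ij}(t)$ denotes the elementary unipotent with $t$ in position $(i,j)$ and $1$'s on the diagonal; the key point is that $z$ occurs only in the first factor. Hence $v_\lambda|n(x,y,z)=\big((v_\lambda|E_{23}(z))\,|\,E_{13}(y)\big)\,|\,E_{12}(x)$, and as $|E_{13}(y)$ and $|E_{12}(x)$ are $\roi_L$-polynomial operations in $y$ and $x$ not involving $z$, it suffices to show that $v_\lambda|E_{23}(z)$ has degree $\le k_2$ in $z$. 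Since $E_{23}(z)=\exp(z\,e_{23})$ for $e_{23}$ the corresponding nilpotent matrix, this amounts to the vanishing $v_\lambda|e_{23}^{k_2+1}=0$ in the (right) action of the universal enveloping algebra. I would deduce this from $\mathfrak{sl}_2$-theory applied to the standard triple $(e_{23},e_{32},h)$ with $h=e_{22}-e_{33}$: the defining properties of the highest weight vector give $v_\lambda|e_{32}=0$ (since $e_{32}\in\mathrm{Lie}(N^{\mathrm{opp}})$ and $v_\lambda$ is fixed by $N^{\mathrm{opp}}$) and $v_\lambda|h=(k_2-0)v_\lambda=k_2v_\lambda$, so $v_\lambda$ sits at the extreme of an $\mathfrak{sl}_2$-string of length $k_2+1$ on which $e_{23}$ acts as the opposite nilpotent; thus $e_{23}$ annihilates $v_\lambda$ after $k_2+1$ applications.

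Combining these: $f_\lambda\big|_{\mathscr{X}}$ is $\roi_L$-rigid analytic, $V_\lambda(\roi_L)$-valued, of weight $\lambda$ under $T$, and its $(x,y,z)$-expansion has $z$-degree $\le k_2$, which is exactly the condition placing it in $\A^P_\lambda(\roi_L)\otimes V_\lambda(\roi_L)$ via the identification of $\A^P_\lambda$ with a subspace of $\A_\lambda$. I expect the main obstacle to be the bookkeeping around the left/right action conventions: one must be sure that the factorisation above isolates precisely the root direction $\alpha_2=e_2-e_3$ whose $\mathfrak{sl}_2$-string through $v_\lambda$ has length exactly $k_2+1$ --- equivalently, that it is the ``$\mathrm{Sym}^{k_2}$-slot'' of $V_\lambda$, governed by the middle entry of $\lambda$, and not the $\mathrm{Sym}^{k_1}$-slot, that controls the $z$-degree. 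One could alternatively try to read the bound off the explicit formula for $v_\lambda$ together with the explicit action of Lemma \ref{explicitaction}, but the $\mathfrak{sl}_2$-argument is cleaner and less sensitive to conventions.
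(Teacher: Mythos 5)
Your proof is correct, but it takes a genuinely different route from the paper's. The paper argues by brute force: it substitutes the explicit highest weight vector $v_\lambda\in\mathrm{Sym}^{k_1}(\roi_L^3)\otimes\mathrm{Sym}^{k_2}(\roi_L^3)$ into the right action of $g=\left(\begin{smallmatrix}1&x&y\\0&1&z\\0&0&1\end{smallmatrix}\right)$ and obtains the closed expansion of $v_\lambda|g$ (equation (\ref{rholambda1})), from which the integrality of the coefficients and the bound $\deg_z\le k_2$ are read off at a glance, since $z$ occurs only through the factors $(Y+zZ)^i\otimes(V+zW)^{k_2-i}$. You avoid the explicit formula entirely: you factor the unipotent as $E_{23}(z)E_{13}(y)E_{12}(x)$ (which is correct --- the $z$ is isolated in the first factor), note that the actions of $E_{13}(y)$ and $E_{12}(x)$ do not involve $z$, and bound the $z$-degree of $v_\lambda|E_{23}(z)$ by the $\mathfrak{sl}_2$-string argument for the root $e_2-e_3$, using only $v_\lambda|e_{32}=0$ and $v_\lambda|h=(k_2-0)v_\lambda$ for $h=e_{22}-e_{33}$; modulo the right-action bookkeeping you flag, the string through $v_\lambda$ does have length $k_2+1$, so $v_\lambda|e_{23}^{k_2+1}=0$ and the bound follows. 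One small point of hygiene: the step $E_{23}(z)=\exp(ze_{23})$ involves divided powers and should be invoked over $L$ only to bound the degree, with integrality of the coefficients supplied separately by the fact that the group action preserves the lattice $\mathrm{Sym}^{k_1}(\roi_L^3)\otimes\mathrm{Sym}^{k_2}(\roi_L^3)$ and $v_\lambda$ is integral --- which is essentially how you arrange it. Your approach buys conceptual clarity and generality: it explains, without computation, why it is the middle entry $k_2$ of $\lambda$ (i.e.\ $\langle\lambda,\alpha^\vee\rangle$ for the root in the $(2,3)$-direction) that controls the $z$-degree, and it would adapt to other parabolics. The paper's computation buys reusability: the same explicit expansion (\ref{rholambda1}) is needed again in Proposition \ref{kernelcond} to analyse $\ker(\pr_\lambda^1)$, so the closed formula is not wasted effort there.
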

\begin{proof}We explicitly compute $v_\lambda|g$, where 
\[g = \threematrix{1}{x}{y}{1}{z}{1}.\]
We see that this is equal to 
\begin{equation}\label{rholambda1}
v_\lambda|g = \sum_{i=1}^{k_2}(-1)^i\binomc{k_2}{i}(X+xY+yZ)^{k_1-i}(Y+zZ)^i\otimes (U+xV+yW)^i(V+zW)^{k_2-i}.
\end{equation}
It's easy to see from this that the coefficient of each monomial is an element of $\A_\lambda^P(\roi_L)$ (and in particular that the maximal degree of $z$ in this expression is $k_2$), and we conclude the result.
\end{proof}
For a distribution $\mu\in\D^P_\lambda(\roi_L)$, define an `evaluation at $\A^P_\lambda(\roi_L)\otimes V_\lambda(\roi_L)$' map by setting
\[\mu(f\otimes v) = \mu(f)\otimes v \in V_\lambda(\roi_L).\]
In particular, we can evaluate at $f_\lambda$.
\begin{mdef}Define the \emph{specialisation map at weight $\lambda$} to be the map 
\[\pr_\lambda^1 : \D^P_\lambda(\roi_L) \longrightarrow V_\lambda(\roi_L)\]
given by evaluation at $f_\lambda \in \A^P_\lambda(\roi_L)\otimes V_\lambda(\roi_L).$
\end{mdef}
This map is $I$-equivariant, but not $\pi_i$-equivariant. As in \cite{PP09}, we introduce a twisted action of $\pi_i$ to get around this.
\begin{mdef} \label{twisted}
Define a (right) action of $\Sigma$ on $V_\lambda(L)$ by 
\begin{align*}
v\star \gamma &= v|\gamma, \hspace{4pt}\gamma \in I,\\
v\star \pi_i &= \lambda(\pi_i)^{-1}v|\pi_i.
\end{align*}
Let $V_\lambda^\star(L)$ denote the module $V_\lambda(L)$ with this twisted action.
\end{mdef}
 Then we see that:
\begin{mlem}
The map $\pr_\lambda^1: \D^P_\lambda(L) \longrightarrow V_\lambda^\star(L)$ is $\Sigma$-equivariant.
\end{mlem}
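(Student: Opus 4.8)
The plan is to exploit that $\Sigma$ is the semigroup generated by $I$, $\pi_1$ and $\pi_2$, so that it is enough to check $\pr_\lambda^1(\mu|\sigma) = \pr_\lambda^1(\mu)\star\sigma$ for all $\mu\in\D^P_\lambda(L)$ and $\sigma$ running over these generators; the general case then follows by an immediate induction on word length, using that both $\D^P_\lambda(L)$ and $V_\lambda^\star(L)$ are right $\Sigma$-modules. For $\sigma=\gamma\in I$ the twisted action $v\star\gamma$ is by definition the untwisted action $v|\gamma$ (Definition \ref{twisted}), so this case is precisely the $I$-equivariance of $\pr_\lambda^1$ already recorded above. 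It remains to treat $\sigma=\pi_1$ and $\sigma=\pi_2$.

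For these I would unwind the definitions. Recall $\pr_\lambda^1(\mu)=\mu(f_\lambda)$, where $f_\lambda|_{\mathscr{X}}\in\A^P_\lambda(\roi_L)\otimes V_\lambda(\roi_L)$ and $\mu$ is applied in the $\A^P_\lambda$-slot via $\mu(g\otimes v)=\mu(g)\,v$; since the $\Sigma$-action on $\D^P_\lambda$ is dual to the weight-$\lambda$ action on $\A^P_\lambda$, we have $\pr_\lambda^1(\mu|\pi_i)=(\mu|\pi_i)(f_\lambda)=\mu(\pi_i\cdot f_\lambda)$, where $\pi_i\cdot$ denotes the weight-$\lambda$ action on the $\A^P_\lambda$-slot. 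Everything then reduces to the identity
\[(\pi_i\cdot f_\lambda)(b)\;=\;\lambda(\pi_i)^{-1}\,\bigl(f_\lambda(b)\,|\,\pi_i\bigr)\qquad (b\in\mathscr{X}),\]
where on the right $|\pi_i$ is the (untwisted) action of $\pi_i\in G$ on $V_\lambda$. To prove it, write $b$ as the class of $g$ in $N^{\mathrm{opp}}(\roi_{\Cp})\backslash G(\roi_{\Cp})$; since $\pi_i$ is diagonal it normalises $N^{\mathrm{opp}}$, so $b|\pi_i$ is the class of $\pi_i^{-1}g\pi_i$ (cf.\ Lemma \ref{explicitaction}(ii)), and as $f_\lambda$ descends to $N^{\mathrm{opp}}\backslash G$ we get $(\pi_i\cdot f_\lambda)(b)=v_\lambda|(\pi_i^{-1}g\pi_i)=(v_\lambda|\pi_i^{-1})\,|\,g\,|\,\pi_i$. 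Finally $v_\lambda|\pi_i^{-1}=\lambda(\pi_i)^{-1}v_\lambda$, because $v_\lambda$ is a $T$-eigenvector of weight $\lambda$ and $\pi_i$ acts invertibly on $V_\lambda$; substituting and pulling the scalar out gives the claimed identity.

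Plugging this back in, and using that applying $\mu$ (which only sees the $\A^P_\lambda$-slot) commutes with the $V_\lambda$-action by $\pi_i$, we obtain
\[\pr_\lambda^1(\mu|\pi_i)=\mu\bigl(\lambda(\pi_i)^{-1}(f_\lambda\,|\,\pi_i)\bigr)=\lambda(\pi_i)^{-1}\bigl(\mu(f_\lambda)\,|\,\pi_i\bigr)=\lambda(\pi_i)^{-1}\,\pr_\lambda^1(\mu)\,|\,\pi_i=\pr_\lambda^1(\mu)\star\pi_i,\]
which is exactly $\pi_i$-equivariance for the twisted action, completing all cases. I do not anticipate a serious obstacle: the one thing to keep straight is the interplay of the two incarnations of the $\Sigma$-action — the overconvergent (weight-$\lambda$) action on $\A^P_\lambda$ that $\pr_\lambda^1$ contracts against, versus the plain $G$-action on $V_\lambda$ — together with the twist by $\lambda(\pi_i)$ that relates them, which is precisely why the twisted module $V_\lambda^\star$ had to be introduced.
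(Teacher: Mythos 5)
Your proof is correct, and it supplies exactly the verification the paper omits: the lemma is stated there with no proof (the $I$-equivariance is asserted and the $\pi_i$-case is left to the reader, following Pollack--Pollack), and your reduction to generators together with the computation $f_\lambda(\pi_i^{-1}g\pi_i)=(v_\lambda|\pi_i^{-1})|g|\pi_i=\lambda(\pi_i)^{-1}(f_\lambda(g))|\pi_i$ is precisely the intended argument, showing why the twist in Definition \ref{twisted} is what makes $\pr_\lambda^1$ equivariant. The only point worth flagging is cosmetic: the highest-weight relation $v_\lambda|t=\lambda(t)v_\lambda$ is stated in the paper for $t\in T(\roi_L)$, while $\pi_i^{-1}\in T(L)$; this extends as you use it because $V_\lambda$ is an algebraic representation, so the weight-eigenvector property holds over $L$.
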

\begin{mdef}
Let $L_\lambda(\roi_L) \defeq \pr_\lambda^1(\D_\lambda^P(\roi_L))\subset V_\lambda^\star(L).$ Note that this is stable under the $\star$-action of $\Sigma$ since $\pr_\lambda^1$ is $\Sigma$-equivariant.
\end{mdef}
We have an action of $\Gamma \subset I$ on these coefficient spaces. In particular, we can define the group cohomology of these coefficient spaces, and then note that, for each integer $r$, the map $\pr_\lambda^1$ induces a map
\[\rho_\lambda^1 \defeq \rho_\lambda^1(r): \h^r(\Gamma,\D^P_\lambda(\roi_L)) \longrightarrow \h^r(\Gamma,L_\lambda(\roi_L)).\]
These spaces come equipped with the natural Hecke action on cohomology, and the action of the $U_p$ operator is given by the matrix $\pi = \pi_1\pi_2$.

%
%
%
%

\section{Filtrations and control theorems for $\SLthree$}
We recall what we have done so far. For a weight $\lambda = (k_1,k_2,0) \in \Z^3$, we defined a space $L_\lambda(\roi_L)$ of classical coefficients, a space $\D_\lambda^P(\roi_L)$ of partially overconvergent coefficients, and a space $\D_\lambda(\roi_L)$ of fully overconvergent coefficients (where $\D_\lambda(\roi_L)$ is as defined in \cite{PP09}). We also defined maps $\pr_\lambda^i$ between these coefficient modules, and these induce maps
\[ \h^r(\Gamma,\D_\lambda(\roi_L)) \labelrightarrow{\rho_\lambda^2} \h^r(\Gamma, \D_\lambda^P(\roi_L)) \labelrightarrow{\rho_\lambda^1} \h^r(\Gamma,L_\lambda(\roi_L))\]
on the cohomology.
$\lb$
In this section, we prove that if we restrict to the simultaneous \emph{small-slope} eigenspaces of the operators on the cohomology given by $\pi_1$ and $\pi_2$, the composition $\rho_\lambda$ of these maps is an isomorphism. For posterity, we give the definition of small slope now.
\begin{mdef}\label{heckeatp}
Let $U_{p,i}$ be the operator on the cohomology induced by the element $\pi_i$ of Remark \ref{pi}, for $i=1,2$. We call these operators the \emph{Hecke operators at $p$}.
\end{mdef}
\begin{mdef}Let $\phi$ be an eigensymbol at $p$ (with classical or overconvergent coefficients) of weight $\lambda = (k_1,k_2,0)$, and write $U_{p,i}\phi = \alpha_i \phi$ for $i = 1,2$. We say  said to be \emph{small slope} at $p$ if
\[v_p(\alpha_1) < k_1-k_2+1 \hspace{12pt}\text{and} \hspace{12pt} v_p(\alpha_2) < k_2+1.\]
\end{mdef}
In particular, we will show that the restriction of $\rho_\lambda$ to the small slope subspaces is an isomorphism. We use two applications of Theorem \ref{liftingtheorem} to prove this.

\subsection{Lifting to partially overconvergent coefficients}
We now define a filtration on the modules $\D_\lambda^P(\roi_L)$ that allows us to apply Theorem \ref{liftingtheorem}.
\subsubsection{Filtrations on $\D_\lambda^P(\roi_L)$}
\begin{mdef}Define 
\[F^N\D_\lambda^P(\roi_L) \defeq \left\{\mu \in \D_\lambda^P(\roi_L): \mu(x^ry^sz^t) \in \pi_L^{N-(r+s)}\roi_L\right\}.\]
\end{mdef}
\begin{mprop}\label{sigmastable}
The filtration $F^N\D_\lambda^P(\roi_L)$ is stable under the action of $\Sigma$. 
\end{mprop}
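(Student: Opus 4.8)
The plan is to reduce to the generators of $\Sigma$ and then track $p$-divisibility through the explicit formulas of Lemma \ref{explicitaction}. Since $\Sigma$ is generated by $I=\Gamma_0(p)$, $\pi_1$ and $\pi_2$, and since $(\mu|\gamma)(f)=\mu(\gamma\cdot f)$, it suffices to show that for each generator $\gamma$ and each monomial $x^ry^sz^t$ with $t\le k_2$ the expansion $\gamma\cdot(x^ry^sz^t)=\sum_{r',s',t'}c_{r's't'}x^{r'}y^{s'}z^{t'}$ satisfies $v_p(c_{r's't'})\ge (r'+s')-(r+s)$: for then, given $\mu\in F^N\D_\lambda^P(\roi_L)$, each term of $(\mu|\gamma)(x^ry^sz^t)=\sum c_{r's't'}\,\mu(x^{r'}y^{s'}z^{t'})$ lies in $\pi_L^{(r'+s')-(r+s)}\roi_L\cdot\pi_L^{N-(r'+s')}\roi_L\subseteq\pi_L^{N-(r+s)}\roi_L$, so $\mu|\gamma\in F^N\D_\lambda^P(\roi_L)$, and stability under all of $\Sigma$ follows. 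For $\gamma=\pi_1$ or $\pi_2$ this is immediate from Lemma \ref{explicitaction}(ii): one has $\pi_1\cdot(x^ry^sz^t)=p^{r+s}x^ry^sz^t$ and $\pi_2\cdot(x^ry^sz^t)=p^{s+t}x^ry^sz^t$, so no monomial changes and the extra power of $p$ only helps.

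The substantive case is $\gamma\in I$, where one must exploit that $\gamma$ is upper-triangular modulo $p$. First I would rewrite Lemma \ref{explicitaction}(i): writing $A$ and $B$ for the two denominators and $\mathrm{num}_1,\mathrm{num}_2,\mathrm{num}_3$ for the three numerators appearing there, it reads
\[\gamma\cdot(x^ry^sz^t)=A^{\,k_1-k_2-(r+s)}\,B^{\,k_2-t}\,\mathrm{num}_1^{\,r}\,\mathrm{num}_2^{\,s}\,\mathrm{num}_3^{\,t}.\]
This makes sense in the integral Tate algebra $\roi_L\langle x,y,z\rangle$ because $t\le k_2$ (so $B^{\,k_2-t}$ is a polynomial) and because $A$ has unit constant term $a_{11}$ — a diagonal entry of a matrix in $\SLthree(\Z)$ which is upper-triangular mod $p$, hence a unit — with topologically nilpotent remainder. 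Next I would introduce the decreasing filtration $\mathcal{G}_\bullet$ of $\roi_L\langle x,y,z\rangle$ by declaring $g\in\mathcal{G}_d$ iff the coefficient of $x^{r'}y^{s'}z^{t'}$ in $g$ lies in $p^{\max(0,(r'+s')-d)}\roi_L$ for all $r',s',t'$; a short check gives $\mathcal{G}_{d_1}\mathcal{G}_{d_2}\subseteq\mathcal{G}_{d_1+d_2}$, and the smallest piece $\mathcal{G}_\infty=\bigcap_d\mathcal{G}_d$ (coefficient of $(x,y)$-degree $n$ divisible by $p^n$ for every $n$) is closed under multiplication and satisfies $\mathcal{G}_\infty\mathcal{G}_d\subseteq\mathcal{G}_d$. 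The hypothesis that $\gamma$ is upper-triangular mod $p$ says exactly that the entries $a_{21},a_{31},a_{32}$ and the minors $m_{13},m_{23},m_{12}$ all lie in $p\Z$; feeding this into the explicit expressions for $A$, $B$ and $\mathrm{num}_3$ shows each of them lies in $\mathcal{G}_\infty$ — and also $A^{-1}\in\mathcal{G}_\infty$, its $n$-th term being divisible by $p^n$ — so $A^{\,k_1-k_2-(r+s)}$, $B^{\,k_2-t}$ and $\mathrm{num}_3^{\,t}$ all lie in $\mathcal{G}_\infty$. On the other hand $\mathrm{num}_1$ and $\mathrm{num}_2$ are linear in $x,y$ with $\roi_L$-coefficients, so lie in $\mathcal{G}_1$, whence $\mathrm{num}_1^{\,r}\in\mathcal{G}_r$ and $\mathrm{num}_2^{\,s}\in\mathcal{G}_s$. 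Multiplying through, $\gamma\cdot(x^ry^sz^t)\in\mathcal{G}_{r+s}$, which is precisely the coefficient bound required in the first paragraph.

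The hard part is this last step: arranging the $p$-adic bookkeeping so that the power of $p$ in each coefficient grows at least as fast as the excess $(x,y)$-degree. The filtration $\mathcal{G}_\bullet$ is the device that makes it clean, and the two facts it really rests on are its multiplicativity and the observation — forced by $\gamma$ being upper-triangular mod $p$ — that the only $(x,y)$-degree that is ``cheap'' is the one contributed by $\mathrm{num}_1^{\,r}\mathrm{num}_2^{\,s}$, which is exactly $r+s$; every other factor lies in $\mathcal{G}_\infty$ and costs nothing. The variable $z$ never enters $\mathcal{G}_\bullet$ and is harmless only because we work on the \emph{partially} overconvergent module: the bound $t\le k_2$ is exactly what keeps $B^{\,k_2-t}$ polynomial and $\mathrm{num}_3$ in $\mathcal{G}_\infty$, and this is the one place that restriction is used.
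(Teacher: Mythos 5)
Your proof is correct and follows essentially the same route as the paper: reduce to the generators $I$, $\pi_1$, $\pi_2$, use the explicit weight-$\lambda$ action of Lemma \ref{explicitaction}, and exploit that $a_{21},a_{31},a_{32},m_{12},m_{13},m_{23}$ are divisible by $p$ to conclude that the coefficient of $x^{r'}y^{s'}z^{t'}$ in $\gamma\cdot x^ry^sz^t$ has $p$-valuation at least $(r'+s')-(r+s)$. Your multiplicative filtration $\mathcal{G}_\bullet$ (which is increasing in $d$, so your ``smallest piece'' $\mathcal{G}_\infty$ is just $\mathcal{G}_0$) merely repackages the paper's bracket-by-bracket divisibility count, though it has the small merit of making explicit the geometric-series expansion of $A^{-1}$ when $k_1-k_2-(r+s)<0$, a point the paper's proof leaves implicit.
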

\begin{proof}
Let $\mu\in F^N\disp.$ We know that, for $\gamma = (a_{ij}) \in I$, we have
\begin{align*}\gamma\cdot x^ry^sz^t = &(a_{12}+a_{22}x+a_{32}y)^r(a_{13}+a_{23}x+a_{33}y)^s\\ 
&\times(-m_{32}+m_{22}z-(m_{12}z)x+m_{12}y)^t(a_{11}+a_{21}x+a_{31}y)^{k_1-k_2-r-s}\\
&\hspace{60pt}\times(m_{33}-m_{23}z-(m_{13}z)x-m_{13}y)^{k_2-t},
\end{align*}
where $m_{ij}$ is the $(i,j)$th minor of $\gamma$, using Lemma \ref{explicitaction}. Write this as 
\[\mu|\gamma(x^ry^sz^t) = \sum_{a,b\geq 0}\beta_{ab}(z)x^ay^b,\]
where $\beta_{ab}(z)$ is a polynomial in $z$ of degree at most $t$. Then note that $p$ divides the terms $a_{21},a_{31},a_{32},m_{12},m_{13},$ and $m_{23}$, whilst the terms $a_{11},a_{22},a_{33},m_{22}$ and $m_{33}$ are all $p$-adic units. In particular, we examine the $p$-divisibility conditions on the coefficients $\beta_{ab}(z)$. Any monomial $x^ay^b$ coming from the first bracket in this expression has coefficient divisible by $p^{a+b-r}$, since $p|a_{32}$. Similarly, any such monomial in the second bracket has coefficient divisible by $p^{a+b-s}$. Moreover, since in the remaining three brackets $p$ divides the coefficient of both $x$ and $y$ before expanding, we see that any monomial including $x^ay^b$ in the expanded expression is divisible by $p^{a+b}$. Accordingly, by combining this, we see that $p^{a+b-(r+s)}|\beta_{ab}(z)$. Since we already know that $\mu(x^ay^bz^c) \in \pi_L^{N-(a+b)}\roi_L$ for any $c\leq t$, we now see that
\[\mu(\beta_{ab}(z)x^ay^b) \in p^{a+b-(r+s)}\pi_L^{N-(a+b)}\roi_L \subset \pi_L^{N-(r+s)}\roi_L,\]
as required.
$\lb$
Since $\pi_1$ and $\pi_2$ act on such monomials by multiplying by a non-negative power of $p$, they also preserve the filtration. Thus the filtration is stable under the action of $\Sigma$.
\end{proof}
We actually need a slightly finer filtration.
\begin{mdef}
Define
\[\f^N\D_\lambda^P(\roi_L) \defeq F^N\D_\lambda^P(\roi_L)\cap \ker(\pr_\lambda^1).\]
\end{mdef}
Since $\pr_\lambda^1$ is $\Sigma$-equivariant, this filtration is also $\Sigma$-stable. The crux of our argument is then:
\begin{mprop}\label{kernelcond}
Suppose $\mu\in \ker(\pr_\lambda^1).$ Then 
\[\mu(x^ry^sz^t) = 0 \hspace{12pt}\text{for all }r+s \leq k_1-k_2, \hspace{4pt}0\leq t\leq k_2.\]
\end{mprop}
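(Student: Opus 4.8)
The plan is to use the explicit description of the specialisation map $\pr_\lambda^1$ as evaluation at the highest weight vector $f_\lambda$, together with the formula (\ref{rholambda1}) for $v_\lambda|g$. Recall that $\mu \in \ker(\pr_\lambda^1)$ means precisely that $\mu(f_\lambda) = 0$ in $V_\lambda(\roi_L) \subset \mathrm{Sym}^{k_1}(\roi_L^3)\otimes\mathrm{Sym}^{k_2}(\roi_L^3)$. Since $\mu$ acts only on the $\A_\lambda^P$-variable (i.e.\ via the ``evaluation at $\A^P_\lambda\otimes V_\lambda$'' convention), $\mu(f_\lambda)$ is obtained from (\ref{rholambda1}) by replacing each monomial $x^ry^sz^t$ appearing there with the scalar $\mu(x^ry^sz^t)$; the result is then an element of $\mathrm{Sym}^{k_1}\otimes\mathrm{Sym}^{k_2}$ whose coefficients (in the monomial basis of the polynomial ring in $X,Y,Z,U,V,W$) are explicit $\roi_L$-linear combinations of the numbers $\mu(x^ry^sz^t)$. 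Setting all these coefficients to zero gives a linear system, and the content of the proposition is that this system forces $\mu(x^ry^sz^t) = 0$ for $r+s \le k_1-k_2$ and $0 \le t \le k_2$.

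First I would expand (\ref{rholambda1}): writing out $(X+xY+yZ)^{k_1-i}(Y+zZ)^i$ and $(U+xV+yW)^i(V+zW)^{k_2-i}$ via the multinomial theorem, one sees that the monomial $X^{k_1-i}\,U^i\,V^{k_2-i}$ (and more generally monomials with no $Y,Z,W$ in the $X$-block and the ``pure'' term in the $U$-block) picks out exactly the term where we take $X^{k_1-i}$ from the first factor — contributing $x^0 y^0$ — and so on. The key observation is that, by isolating the coefficients of suitably chosen monomials in $X,Y,Z,U,V,W$, one extracts precisely the values $\mu(x^r y^s z^t)$ for small $r+s$, one at a time (or in small triangular blocks), so that vanishing of $\mu(f_\lambda)$ propagates to vanishing of each such moment. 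Concretely, for fixed $r,s$ with $r+s \le k_1-k_2$, choose $i$ and a monomial that receives a contribution $\binom{k_1-i}{r,s,\ast}x^r y^s$ from the first factor and a contribution free of $x,y$ from the other three factors; the binomial coefficient is a $p$-adic unit in the relevant range (this is where $r+s \le k_1-k_2$ enters, ensuring the multinomial coefficient $\binom{k_1-i}{\cdots}$ doesn't force an unwanted power of $p$ and, more importantly, that the monomial in question genuinely appears), and comparing with $z^t$-powers from the factors $(Y+zZ)^i$ and $(V+zW)^{k_2-i}$ isolates the individual $z$-degree $t \le k_2$. An induction on $r+s$ (peeling off the lowest-order monomials first, then using already-established vanishing to clean up cross terms) then yields $\mu(x^r y^s z^t) = 0$ throughout the claimed range.

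The main obstacle is combinatorial bookkeeping: the element $v_\lambda$ is itself a sum over $i$ of tensor products, and each summand contributes overlapping families of monomials, so extracting a clean triangular linear system requires choosing the right monomials in $X,Y,Z,U,V,W$ to test against and checking that the transition matrix between $\{\mu(x^r y^s z^t)\}_{r+s\le k_1-k_2,\,t\le k_2}$ and the tested coefficients is invertible over $\roi_L$ (or at least injective). I expect the cleanest route is to test against monomials of the form $X^{k_1-i-r-s}Y^{r+?}Z^{s+?}\otimes U^{i}V^{\cdots}W^{\cdots}$ with the extra $Y,Z$-degrees forced to be minimal, so that the leading term of the system is diagonal with unit entries $\pm\binom{k_1}{i}\binom{k_1-i}{r,s}$, reducing the whole argument to the invertibility of a unipotent matrix. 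Once that linear-algebra skeleton is in place, the proposition follows; the hypothesis $r+s\le k_1-k_2$ is exactly what guarantees that the relevant monomials occur in (\ref{rholambda1}) with unit coefficients and hence that these moments are ``seen'' by $\pr_\lambda^1$, whereas moments with $r+s > k_1-k_2$ lie in the kernel and are genuinely free.
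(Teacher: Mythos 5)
You take essentially the same route as the paper: unwind $\mu\in\ker(\pr_\lambda^1)$ to the single condition $\mu(f_\lambda)=0$, expand $f_\lambda$ via equation (\ref{rholambda1}), and compare coefficients of monomials in $X,Y,Z,U,V,W$ to force the moments $\mu(x^ry^sz^t)$ in the stated range to vanish. The paper makes one simplifying move you do not: it restricts attention to the monomials containing $U^{k_2}$, which can only arise from the $i=k_2$ summand, so all the bookkeeping happens inside the single product $(X+xY+yZ)^{k_1-k_2}(Y+zZ)^{k_2}\otimes U^{k_2}$; this removes most of the interference between different values of $i$ that you identify as the main obstacle.

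The genuine gap is at the step you leave as an expectation. It is not true that a well-chosen single test monomial isolates an individual moment, nor that ``comparing with $z^t$-powers isolates the individual $z$-degree'': the coefficient of one fixed monomial genuinely mixes several moments with the same total $x,y$-degree but different $z$-degrees, so the system is not diagonal, and your induction on $r+s$ cannot repair this because the mixed moments lie in the same degree. Concretely, for $\lambda=(2,1,0)$ the coefficient of $YZ\otimes U$ in $\mu(f_\lambda)$ is, up to a nonzero scalar, $\mu(y)+\mu(xz)$; the $U^{k_2}$-component alone yields only this sum, and separating $\mu(y)$ from $\mu(xz)$ requires also using coefficients coming from components involving $V$ or $W$ (e.g.\ that of $XZ\otimes V$). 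So the linear algebra does close up, but only after combining equations across several components of $v_\lambda|g$, and exhibiting such a combination (or otherwise proving injectivity of the system) is precisely the content that must be written out; as it stands, your ``triangular with unit diagonal'' claim is the assertion to be proved rather than an observation. Two smaller points: the insistence on unit multinomial coefficients is a red herring, since $\roi_L$ is a domain and you only need to conclude vanishing, so any nonzero coefficients in $L$ suffice; and the real role of the bound $r+s\le k_1-k_2$ is simply that, in the $U^{k_2}$-component, the $x,y$-dependence sits in the factor $(X+xY+yZ)^{k_1-k_2}$ of degree $k_1-k_2$, so only moments in this range are visible there.
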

\begin{proof}
We explicitly examine the map $\pr_\lambda^1$. Earlier, in equation (\ref{rholambda1}), we gave a formula for the expression $f_\lambda(x,y,z)$. If $\mu \in \ker(\pr_\lambda^1)$, then in particular $\mu(f_\lambda(x,y,z)) = 0$. We consider the monomials including the term $U^{k_2}$, keeping the notation of previously. Such a term can occur only for $i=k_2$, so that these terms all appear in 
\[(-1)^{k_2}(X+xY+zZ)^{k_2-k_1}(Y+zZ)^{k_2}\otimes U^{k_2}.\]
By expanding out this bracket, and considering the coefficients of each monomial, we see that we have $\mu(x^ry^sz^t) = 0$ for at least the range of $r,s$ and $t$ specified by the proposition. 
\end{proof}
\begin{mrem}Note that, for general $\lambda$, this condition on $r+s$ is optimal. In particular, consider $\lambda = (k,1,0)$, for some integer $k\geq 1$. Then if $\mu\in\ker(\rho_\lambda^1)$, then we do not necessarily have $\mu(x^k) = 0$, so in particular we can't say anything general about the values $\mu(x^ry^s)$ where $r+s>k-1$.
\end{mrem}
This filtration satisfies the conditions of Theorem \ref{liftingtheorem}, as we see by:
\begin{mlem}\label{partb}
Let $\mu \in \fil$, and let $\alpha \in \roi_L$ with $v_p(\alpha)<k_1-k_2+1$. Then \[\mu|\pi_1 \in \alpha\f^{N+1}\D_\lambda^P(\roi_L).\]
\end{mlem}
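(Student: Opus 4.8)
The plan is to combine the explicit description of the $\pi_1$-action from Lemma~\ref{explicitaction}(ii) with the vanishing from Proposition~\ref{kernelcond}. Recall that $\pi_1$ acts on a monomial function by $\pi_1\cdot x^ry^sz^t = p^{r+s}x^ry^sz^t$, so dually $\mu|\pi_1(x^ry^sz^t) = p^{r+s}\mu(x^ry^sz^t)$. First I would check that $\mu|\pi_1$ lands in $\ker(\pr_\lambda^1)$: this is automatic, since $\ker(\pr_\lambda^1) = \ker(\pr_\lambda^1)\cap\D_\lambda^P$ is $\Sigma$-stable (as $\pr_\lambda^1$ is $\Sigma$-equivariant), and $\mu$ is assumed to lie in $\f^N\D_\lambda^P(\roi_L) \subset \ker(\pr_\lambda^1)$. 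So it remains to show that $\alpha^{-1}\mu|\pi_1$ (a priori only a functional with values in $L$) actually takes values in $\pi_L^{N+1-(r+s)}\roi_L$ on the monomial $x^ry^sz^t$, i.e.\ that $p^{r+s}\mu(x^ry^sz^t) \in \alpha\pi_L^{N+1-(r+s)}\roi_L$ for all $r,s,t \geq 0$ with $t \leq k_2$ (the condition $t\le k_2$ being built into $\D_\lambda^P$).

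The argument then splits according to the size of $r+s$. If $r+s \leq k_1-k_2$, then by Proposition~\ref{kernelcond} we have $\mu(x^ry^sz^t) = 0$ for all $0\le t\le k_2$, so the required containment is trivial. If instead $r+s \geq k_1-k_2+1$, then $v_p(p^{r+s}) = r+s \geq k_1-k_2+1 > v_p(\alpha)$, so $\alpha^{-1}p^{r+s}$ is a nonzero element of $\roi_L$ — in fact $v_p(\alpha^{-1}p^{r+s}) = (r+s) - v_p(\alpha) \geq (r+s)-(k_1-k_2)$, which is $\ge 1$. Meanwhile, since $\mu \in \f^N\D_\lambda^P(\roi_L) \subset F^N\D_\lambda^P(\roi_L)$, we know $\mu(x^ry^sz^t) \in \pi_L^{N-(r+s)}\roi_L$. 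Multiplying, $\alpha^{-1}p^{r+s}\mu(x^ry^sz^t) \in \pi_L^{e((r+s)-v_p(\alpha))}\cdot\pi_L^{N-(r+s)}\roi_L$ where $e$ is the ramification index; the exponent is $\geq (N - (r+s)) + ((r+s)-(k_1-k_2)) \geq N+1-(k_1-k_2) \geq N+1 - (r+s)$ (using $r+s \ge k_1-k_2+1 \ge k_1-k_2$, but more carefully one just needs the exponent $\ge N+1-(r+s)$, which follows from $e((r+s)-v_p(\alpha)) \ge 1 + e\cdot 0$... here one should be a little careful about whether $v_p$ is normalised so $v_p(p)=1$ and whether $\pi_L$-valuation or $p$-valuation is meant). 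In any case the net effect is that $\alpha^{-1}\mu|\pi_1$ satisfies the defining inequality for $F^{N+1}\D_\lambda^P(\roi_L)$ on every monomial, hence lies in $F^{N+1}\D_\lambda^P(\roi_L)$, and since it also lies in $\ker(\pr_\lambda^1)$ it lies in $\f^{N+1}\D_\lambda^P(\roi_L)$. Equivalently $\mu|\pi_1 \in \alpha\f^{N+1}\D_\lambda^P(\roi_L)$, which is the claim.

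\textbf{Main obstacle.} The only genuinely delicate point is the bookkeeping with valuations when $\roi_L$ is ramified: one must be consistent about whether the filtration exponents are measured in powers of $\pi_L$ or powers of $p$, and whether $v_p(\alpha) < k_1-k_2+1$ is the right normalisation to make $v_p(\alpha) \le r+s - 1$ whenever $r+s \ge k_1-k_2+1$ — this uses that $v_p(\alpha)$ is compared against an \emph{integer} bound, so $v_p(\alpha) < k_1-k_2+1$ forces $v_p(\alpha) \le k_1-k_2 \le r+s-1$ only when $\alpha$ has integral valuation, which need not hold for ramified $L$. The clean way around this is the observation that $p^{r+s}\mu(x^ry^sz^t)$ is being divided by $\alpha$ only \emph{after} knowing the product already lies in $\pi_L^{(N+1-(r+s))}\roi_L \cdot \alpha$ — i.e.\ one shows directly that $p^{r+s}\mu(x^ry^sz^t) \in \alpha\pi_L^{N+1-(r+s)}\roi_L$ by comparing $p$-adic valuations in $L$: the left side has valuation $\ge (r+s) + \tfrac{1}{e}(N-(r+s))$ while $\alpha\pi_L^{N+1-(r+s)}$ has valuation $v_p(\alpha) + \tfrac{1}{e}(N+1-(r+s))$, and the inequality reduces, after clearing $\tfrac1e$, to $r+s \ge v_p(\alpha) + \tfrac1e$, which holds because $r+s \ge k_1-k_2+1 > v_p(\alpha)$ and the gap between a rational $>v_p(\alpha)$ that is an integer and $v_p(\alpha)$ itself exceeds... here one genuinely needs $r+s$ integral and $v_p(\alpha) < r+s$, giving $r+s - v_p(\alpha) > 0$; combined with $r+s-v_p(\alpha) \ge (k_1-k_2+1)-v_p(\alpha) > 0$ one still needs this to beat $\tfrac1e$, so in fact the correct hypothesis phrasing should be used as in Section~5 of \cite{Wil17}. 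I would follow that reference's treatment of the ramified case verbatim; modulo that standard normalisation issue, the proof is the two-case split above.
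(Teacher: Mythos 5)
Your overall route is the paper's: dualise the explicit $\pi_1$-action to get $\mu|\pi_1(x^ry^sz^t)=p^{r+s}\mu(x^ry^sz^t)$, split into $r+s\le k_1-k_2$ (where Proposition \ref{kernelcond} gives vanishing) and $r+s\ge k_1-k_2+1$ (where the filtration bound supplies $\pi_L^{N-(r+s)}$), and your reduction of the second case to the inequality $r+s\ge v_p(\alpha)+\tfrac1e$ is the right target. The gap is that you do not close this inequality: you suggest the hypothesis might need renormalising and defer to \cite{Wil17}. In fact the stated hypothesis suffices, via one observation you are missing: since $\alpha\in\roi_L$, its valuation $v_p(\alpha)=v_{\pi_L}(\alpha)/e$ lies in $\tfrac1e\Z$, and $r+s$ is an integer (hence also in $\tfrac1e\Z$), so $r+s-v_p(\alpha)$ is a \emph{positive} element of $\tfrac1e\Z$ and is therefore $\ge\tfrac1e$; positivity is exactly $v_p(\alpha)<k_1-k_2+1\le r+s$. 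Equivalently, $p^{k_1-k_2+1}\in\alpha\pi_L\roi_L$, which is precisely how the paper handles the ramified case (its parenthetical ``and it must be divisible by an integral power of $\pi_L$''). Your earlier worry that one would need $v_p(\alpha)\le k_1-k_2$, which indeed can fail for ramified $L$, is a red herring: the argument only needs the difference to clear $\tfrac1e$, not $1$. So no change of hypothesis and no appeal to \cite{Wil17} is required; with this one line inserted, your proof is complete and agrees with the paper's.

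Two minor remarks: your explicit check that $\mu|\pi_1\in\ker(\pr_\lambda^1)$, via $\Sigma$-equivariance of $\pr_\lambda^1$ and $\Sigma$-stability of the kernel, is a detail the paper leaves implicit and is worth keeping; and once the valuation inequality is in hand, the containment $p^{r+s}\mu(x^ry^sz^t)\in\alpha\pi_L^{N+1-(r+s)}\roi_L$ follows immediately because $\roi_L$ is a discrete valuation ring, so there is no further subtlety in passing from the valuation bound to membership in the principal ideal.
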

\begin{proof}
We have $\mu|\pi_1(x^ry^sz^t) = p^{r+s}\mu(x^ry^sz^t)$. From Proposition \ref{kernelcond}, we see that if $r+s\leq k_1-k_2$, we have $\mu(x^ry^sz^t) = 0$. In particular, from this, we have
\[\mu|\pi_1(x^ry^sz^t) \in p^{k_1-k_2+1}\pi_L^{N-(r+s)}\roi_L.\]
As $v_p(\alpha)<k_1-k_2+1$, and it must be divisible by an integral power of $\pi_L$, we have $p^{k_1-k_2+1}\in \alpha \pi_L\roi_L$, so that
\[\mu|\pi_1(x^ry^sz^t) \in \alpha\pi_L^{1+N-(r+s)}\roi_L.\]
Thus $\mu \in \alpha\f^{N+1}\D_\lambda^P(\roi_L)$, as required.
\end{proof}

\subsubsection{A submodule of $\D_\lambda^P(\roi_L)$}
We require one further definition before we can apply Theorem \ref{liftingtheorem}; namely, a submodule of $\D_\lambda^P(\roi_L)$ that will play the role of $D^\alpha$ in condition (v) in Notation \ref{setup}.
\begin{mdef}
Let $\alpha\in\roi_L$. Define 
\[\D_\lambda^{P,\alpha}(\roi_L) \defeq \left\{\mu\in\D_\lambda^P(\roi_L): \mu(x^ry^sz^t)\in \alpha p^{-(r+s)}\roi_L\right\}.\]
\end{mdef}
\begin{mprop}
The subspace $\D_\lambda^{P,\alpha}(\roi_L)$ is stable under the action of $\Sigma$.
\end{mprop}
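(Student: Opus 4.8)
The plan is to reduce the claim to checking stability under the three semigroup generators $I$, $\pi_1$ and $\pi_2$ of $\Sigma$. Since the action of $\Sigma$ is a right action one has $\mu|(\sigma\tau) = (\mu|\sigma)|\tau$, so it suffices to treat each generator separately. For $\pi_1$ and $\pi_2$ this is immediate from Lemma \ref{explicitaction}(ii), which on monomials gives $\mu|\pi_1(x^ry^sz^t) = p^{r+s}\mu(x^ry^sz^t)$ and $\mu|\pi_2(x^ry^sz^t) = p^{s+t}\mu(x^ry^sz^t)$. Thus if $\mu\in\D_\lambda^{P,\alpha}(\roi_L)$, then $\mu|\pi_1(x^ry^sz^t)\in p^{r+s}\cdot\alpha p^{-(r+s)}\roi_L = \alpha\roi_L\subseteq\alpha p^{-(r+s)}\roi_L$, while $\mu|\pi_2(x^ry^sz^t)\in p^{s+t}\cdot\alpha p^{-(r+s)}\roi_L = \alpha p^{t-r}\roi_L\subseteq\alpha p^{-(r+s)}\roi_L$ (using $s+t\geq 0$); note also that $\pi_i$ preserves $\A_\lambda^P(\roi_L)$ since it only scales monomials. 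Hence $\mu|\pi_1$ and $\mu|\pi_2$ again lie in $\D_\lambda^{P,\alpha}(\roi_L)$.

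The substantive case is $\gamma\in I$, and here the strategy is to recycle the $p$-divisibility estimate already established in the proof of Proposition \ref{sigmastable}. There we expanded
\[\gamma\cdot x^ry^sz^t = \sum_{a,b\geq 0}\beta_{ab}(z)x^ay^b,\]
where each $\beta_{ab}(z)$ is a polynomial in $z$ (of bounded degree, so that $\gamma\cdot x^ry^sz^t$ lies in $\A_\lambda^P(\roi_L)$) all of whose coefficients are divisible by $p^{a+b-(r+s)}$. The only new input needed is to combine this with the polar bound $\mu(x^ay^bz^c)\in\alpha p^{-(a+b)}\roi_L$ that defines $\D_\lambda^{P,\alpha}(\roi_L)$: writing $\beta_{ab}(z) = \sum_c\beta_{ab}^{(c)}z^c$ and applying $\mu$ termwise (legitimate by continuity), each summand satisfies $\beta_{ab}^{(c)}\mu(x^ay^bz^c)\in p^{a+b-(r+s)}\cdot\alpha p^{-(a+b)}\roi_L = \alpha p^{-(r+s)}\roi_L$, and since $\alpha p^{-(r+s)}\roi_L$ is a closed $\roi_L$-submodule of $L$ the sum $\mu|\gamma(x^ry^sz^t) = \sum_{a,b}\mu(\beta_{ab}(z)x^ay^b)$ lies in it. Thus $\mu|\gamma\in\D_\lambda^{P,\alpha}(\roi_L)$, and combining the three generators shows $\D_\lambda^{P,\alpha}(\roi_L)$ is $\Sigma$-stable.

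I do not expect a genuine obstacle: the heavy lifting — bounding the $p$-adic sizes of the coefficients $\beta_{ab}(z)$ in the expansion of $\gamma\cdot x^ry^sz^t$ — is carried out in Proposition \ref{sigmastable}, and what remains is the bookkeeping of powers of $p$, matching the filtration index against the $p^{-(r+s)}$ allowed in the definition of $\D_\lambda^{P,\alpha}(\roi_L)$. The one point requiring a little care is that when $r+s>k_1-k_2$ the factor $(a_{11}+a_{21}x+a_{31}y)^{k_1-k_2-r-s}$ appearing in $\gamma\cdot x^ry^sz^t$ carries a negative exponent; as in the proof of Proposition \ref{sigmastable} one expands it as a power series using that $a_{11}$ is a $p$-adic unit, which is exactly what makes the divisibility of the $\beta_{ab}(z)$ with $a+b<r+s$ hold trivially and supplies the genuine divisibility for the remaining terms.
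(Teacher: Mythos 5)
Your proof is correct and follows essentially the same route as the paper: for $\gamma\in I$ you reuse the divisibility $p^{a+b-(r+s)}\mid\beta_{ab}(z)$ from the proof of Proposition \ref{sigmastable} and combine it with the defining bound $\mu(x^ay^bz^c)\in\alpha p^{-(a+b)}\roi_L$, while for $\pi_1,\pi_2$ you note they scale monomials by non-negative powers of $p$. The extra care you take (termwise application of $\mu$ by continuity, closedness of $\alpha p^{-(r+s)}\roi_L$, and the power-series expansion of the negative-exponent factor using that $a_{11}$ is a unit) only makes explicit points the paper leaves implicit.
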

\begin{proof}
Let $\mu \in \D_\lambda^P(\roi_L)$ and $\gamma\in I$, and recall the proof of Proposition \ref{sigmastable}, and in particular, the computation
\[\mu|\gamma(x^ry^sz^t) = \sum_{a,b\geq 0}\mu(\beta_{ab}(z)x^ay^b),\]
where $p^{a+b-(r+s)}|\beta_{ab}(z)$. Now take $\mu$ to be in the smaller space $\D_\lambda^{P,\alpha}(\roi_L)$. Then $\mu(\beta_{ab}(z)x^ay^b \in p^{a+b-(r+s)}\alpha p^{-(a+b)}\roi_L = \alpha p^{-(r+s)}.$ Thus $\mu|\gamma \in \D_\lambda^{P,\alpha}(\roi_L)$, as required.
$\lb$
As $\pi_1$ and $\pi_2$ act on monomials by multiplying by non-negative powers of $p$, stability in this case is clear.
\end{proof}
\begin{mlem}\label{parta}
Suppose $\mu\in\D_\lambda^{P,\alpha}(\roi_L)$. Then 
\[\mu|\pi_1 \in \alpha\D_\lambda^P(\roi_L).\]
\end{mlem}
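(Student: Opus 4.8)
The plan is to compute the action of $\pi_1$ on moments directly, exactly as in the proof of Lemma \ref{partb}, and then use the defining inequality on the moments of elements of $\D_\lambda^{P,\alpha}(\roi_L)$. First I would recall from Lemma \ref{explicitaction}(ii) that $\pi_1\cdot f(x,y,z) = f(px,py,z)$, so that dually, for $\mu \in \D_\lambda^P(\roi_L)$, we have
\[\mu|\pi_1(x^ry^sz^t) = \mu(\pi_1\cdot(x^ry^sz^t)) = p^{r+s}\mu(x^ry^sz^t).\]
Now suppose $\mu \in \D_\lambda^{P,\alpha}(\roi_L)$, so that by definition $\mu(x^ry^sz^t) \in \alpha p^{-(r+s)}\roi_L$. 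Combining these two facts gives
\[\mu|\pi_1(x^ry^sz^t) \in p^{r+s}\cdot \alpha p^{-(r+s)}\roi_L = \alpha\roi_L.\]

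Since $\mu|\pi_1$ takes values in $\alpha\roi_L$ on all the monomials $x^ry^sz^t$ (with $0 \le t \le k_2$), and any distribution in $\D_\lambda^P(\roi_L)$ is determined by its values on these monomials, it follows that $\alpha^{-1}(\mu|\pi_1)$ defines an element of $\D_\lambda^P(\roi_L)$ — here one uses that $\D_\lambda^P(\roi_L)$ is characterised inside $\D_\lambda^P(L)$ precisely by having all moments in $\roi_L$, together with the continuity/boundedness condition, which is preserved since dividing a bounded family by the fixed scalar $\alpha$ keeps it bounded. Equivalently, $\mu|\pi_1 \in \alpha\D_\lambda^P(\roi_L)$, which is exactly the claim. (One should also note $\pi_1 \in \Sigma$, so $\mu|\pi_1$ lands in $\D_\lambda^P(\roi_L)$ a priori by the $\Sigma$-stability established earlier; the content of the lemma is the extra factor of $\alpha$.)

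I do not expect any real obstacle here: this is the direct analogue of Lemma \ref{partb}, and in fact slightly easier, since no appeal to Proposition \ref{kernelcond} is needed — the divisibility is built into the definition of $\D_\lambda^{P,\alpha}(\roi_L)$ rather than being extracted from the kernel condition. The only mild subtlety worth a sentence is verifying that membership of $\alpha\D_\lambda^P(\roi_L)$ can be checked moment-by-moment, i.e.\ that the module is cut out by conditions on $\mu(x^ry^sz^t)$; this is immediate from the fact that $\D_\lambda^P(\roi_L) = \Hom_{\cts}(\A_\lambda^P(\roi_L),\roi_L)$ and $\A_\lambda^P(\roi_L)$ is (topologically) generated by the monomials $x^ry^sz^t$ with $t \le k_2$.
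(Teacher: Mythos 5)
Your proposal is correct and follows essentially the same argument as the paper: compute $\mu|\pi_1(x^ry^sz^t) = p^{r+s}\mu(x^ry^sz^t)$ and combine with the defining condition $\mu(x^ry^sz^t)\in\alpha p^{-(r+s)}\roi_L$ to conclude $\mu|\pi_1(x^ry^sz^t)\in\alpha\roi_L$. The extra remarks about checking membership moment-by-moment simply spell out what the paper leaves implicit in ``the result immediately follows.''
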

\begin{proof}
Consider $\mu|\pi_1(x^ry^sz^t) = p^{r+s}\mu(x^ry^sz^t)$. Since $\mu \in \D_\lambda^{P,\alpha}(\roi_L)$, we see that $\mu|\pi_1(x^ry^sz^t) \in \alpha\roi_L,$ and the result immediately follows.
\end{proof}

\subsubsection{Summary and results}
We can now apply Theorem \ref{liftingtheorem} to the small slope subspace in this situation. In particular, in the set-up of this theorem, let $D = \D_\lambda^P(\roi_L)$ and $D^\alpha = \D_\lambda^{P,\alpha}(\roi_L)$. Then we have written down a filtration of this space that satisfies the conditions of Theorem \ref{liftingtheorem}. In particular, we have all the objects of Notation \ref{setup} (i)-(v), and then we've shown condition (a) of the theorem in Lemma \ref{parta} and condition (b) in Lemma \ref{partb}. So we've proved:
\begin{mprop}\label{partial}
Let $\alpha\in\roi_L$ with $v_p(\alpha)<k_1-k_2+1.$ Let $\D_\lambda^P(\roi_L)$ be the module of partially overconvergent coefficients defined in Section \ref{ovcgtcoeffs}, and let $L_\lambda(\roi_L) = \pr_\lambda^1(\D_\lambda^P(\roi_L))$. Then the restriction 
\[\rho_\lambda^1 : \h^r(\Gamma, \D_\lambda^P(\roi_L))^{U_{p,1}=\alpha} \isorightarrow \h^r(\Gamma,L_\lambda(\roi_L))^{U_{p,1}=\alpha}\]of $\rho_\lambda^1$ to the $\alpha$-eigenspaces of the $U_{p,1}$ operator is an isomorphism.
\end{mprop}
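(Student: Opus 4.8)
The plan is to obtain Proposition~\ref{partial} by instantiating Theorem~\ref{liftingtheorem}. I would take $R=\roi_L$, $\Gamma$ the given congruence subgroup, $\Sigma$ the semigroup generated by $I$, $\pi_1$, $\pi_2$ acting on the coefficient modules as in Section~\ref{specialisation}, the module $D=\disp$ with the filtration $\f^N\disp$, the submodule $D^\alpha=\D_\lambda^{P,\alpha}(\roi_L)$, the element $\pi=\pi_1$, and the given $\alpha$ with $v_p(\alpha)<k_1-k_2+1$; note that the operator $U_\pi$ of Notation~\ref{setup} is then exactly the Hecke operator $U_{p,1}$ of Definition~\ref{heckeatp}. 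With these choices the conclusion of Theorem~\ref{liftingtheorem} is (up to the identification discussed at the end) the assertion to be proved, so the whole task is to check its hypotheses; in particular nothing new has to be constructed, since the construction is internal to the proof of Theorem~\ref{liftingtheorem}.

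The structural hypotheses of Notation~\ref{setup} are dispatched by the work already done. That $\disp$ is a right $\roi_L[\Sigma]$-module and that the filtration $\f^N\disp=F^N\disp\cap\ker(\pr_\lambda^1)$ and the submodule $\D_\lambda^{P,\alpha}(\roi_L)$ are $\Sigma$-stable follow from Proposition~\ref{sigmastable}, the $\Sigma$-equivariance of $\pr_\lambda^1$, and the proposition immediately preceding Lemma~\ref{parta}. Completeness of the filtration, $\lim\limits_{\longleftarrow}\AAA^N\disp=\disp$, and triviality of $\bigcap_N\f^N\disp$ follow from the facts that a distribution is determined by its moments, that its moments lie in the complete ring $\roi_L$, and that $\bigcap_N\pi_L^{N-(r+s)}\roi_L=0$. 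Here one should note the small but essential point that the condition cutting out $F^0\disp$ is vacuous, so that $\f^0\disp=\ker(\pr_\lambda^1)$ and hence $\AAA^0D=D/\f^0D\cong\pr_\lambda^1(\disp)=L_\lambda(\roi_L)$, with $V^\alpha=\pr_\lambda^1(\D_\lambda^{P,\alpha}(\roi_L))$; this is what pins down the classical side. Finally, conditions~(a) and~(b) of Theorem~\ref{liftingtheorem} for the element $\pi_1$ are exactly Lemma~\ref{parta} and Lemma~\ref{partb}, and this --- through Proposition~\ref{kernelcond} --- is the only place the slope bound $v_p(\alpha)<k_1-k_2+1$ is used.

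This leaves the torsion hypotheses and the comparison of eigenspaces, which I expect to be the most delicate part. Trivial $\roi_L$-torsion of $D^\alpha$ and $V^\alpha$ is clear (they are submodules of torsion-free $\roi_L$-modules); for their cohomology I would carry it as a standing hypothesis, as the paper does in Theorem~\ref{liftingtheorem}, or restrict to torsion-free $\Gamma$, or work over $L$. Theorem~\ref{liftingtheorem} then gives an isomorphism with $\h^r(\Gamma,\D_\lambda^{P,\alpha}(\roi_L))$ and $\h^r(\Gamma,V^\alpha)$ in place of $\h^r(\Gamma,\disp)$ and $\h^r(\Gamma,L_\lambda(\roi_L))$, so the remaining point is to identify these on the $U_{p,1}=\alpha$ eigenspaces. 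The handle here is the inclusion $\alpha\disp\subseteq\D_\lambda^{P,\alpha}(\roi_L)$ --- every moment of $\alpha\mu$ lies in $\alpha\roi_L\subseteq\alpha p^{-(r+s)}\roi_L$ --- which makes $\disp/\D_\lambda^{P,\alpha}(\roi_L)$, and similarly $L_\lambda(\roi_L)/V^\alpha$, $\alpha$-torsion; one then argues, using the $U_{p,1}$-equivariant long exact cohomology sequences (and, if necessary, base-changing to $L$, where $\D_\lambda^{P,\alpha}(\roi_L)\otimes L=\disp\otimes L$ outright), that the inclusions induce isomorphisms on the $\alpha$-eigenspaces. Feeding this back into the isomorphism from Theorem~\ref{liftingtheorem} gives the proposition.
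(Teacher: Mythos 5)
Your proposal follows essentially the same route as the paper: the paper's entire proof of Proposition~\ref{partial} is to instantiate Theorem~\ref{liftingtheorem} with $D=\D_\lambda^P(\roi_L)$, $D^\alpha=\D_\lambda^{P,\alpha}(\roi_L)$, the filtration $\f^N\D_\lambda^P(\roi_L)$ and $\pi=\pi_1$, with conditions (a) and (b) supplied by Lemmas~\ref{parta} and~\ref{partb}, exactly as you do; your verification of the structural items of Notation~\ref{setup} (completeness, trivial intersection, and $\AAA^0D\cong L_\lambda(\roi_L)$ via the vacuity of the $N=0$ condition) is carried out correctly, where the paper simply asserts them. The one point where you go beyond the paper is the last paragraph: Theorem~\ref{liftingtheorem} literally gives an isomorphism between eigenspaces in $\h^r(\Gamma,\D_\lambda^{P,\alpha}(\roi_L))$ and $\h^r(\Gamma,V^\alpha)$, whereas the Proposition is stated for $\h^r(\Gamma,\D_\lambda^P(\roi_L))$ and $\h^r(\Gamma,L_\lambda(\roi_L))$; the paper passes over this identification in silence, together with the torsion hypotheses. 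Your proposed bridge (the inclusions $\alpha\D_\lambda^P\subseteq\D_\lambda^{P,\alpha}$ and $\alpha L_\lambda\subseteq V^\alpha$ make the quotients $\alpha$-torsion, then compare via the long exact sequences or after $\otimes L$, where the modules coincide) is the natural repair, but be aware that the integral statement is more delicate than your one-line sketch suggests — surjectivity of $\h^r(\Gamma,\D_\lambda^{P,\alpha})^{U_{p,1}=\alpha}\to\h^r(\Gamma,\D_\lambda^P)^{U_{p,1}=\alpha}$ does not follow formally from $\alpha$-torsion of the quotient alone — so either write that step out carefully or state the result, as the theorem itself does, for the $\D_\lambda^{P,\alpha}$-valued cohomology (or with $L$-coefficients).
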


\subsection{From partial to fully overconvergent coefficients}
We now change direction and focus on the action of the $U_{p,2}$ operator induced from $\pi_2$. In particular, by applying the theorem again with the $U_{p,2}$ operator, we can lift from partial to fully overconvergent coefficients. As the results are very similar to, and in many cases simpler than, those above, we present the material here in less detail. 
$\lb$
Define a filtration on $\D_\lambda(\roi_L)$ by
\[\f^N\D_\lambda(\roi_L) \defeq \left\{\mu \in \D_\lambda(\roi_L): \mu(x^ry^sz^t) \in \pi_L^{N-t}\roi_L\right\}\cap\ker(\pr_\lambda^2).\]
This is $\Sigma$-stable by a very similar argument to previously. We also define, for $\alpha\in\roi_L$,
\[\D_\lambda^\alpha(\roi_L) \defeq \left\{\mu\in\D_\lambda(\roi_L): \mu(x^ry^sz^t)\in \alpha p^{-t}\roi_L\right\},\]
which is also easily seen to be $\Sigma$-stable and satisfies the conditions required of $D^\alpha$ in Theorem \ref{liftingtheorem}. When $v_p(\alpha)<k_2+1,$ we see that if $\mu\in\f^N\D_\lambda(\roi_L)$, then $\mu|\pi_2 \in \alpha\f^{N+1}\D_\lambda(\roi_L)$, again by a similar argument before after studying the kernel of $\pr_\lambda^2$. Putting this together and using Theorem \ref{liftingtheorem}, we get:

\begin{mprop}\label{full}
Let $\alpha \in \roi_L$ with $v_p(\alpha)<k_2+1$. Let $\D_\lambda(\roi_L)$ and $\D_\lambda^P(\roi_L)$ be the modules of fully and partially overconvergent coefficients respectively, as defined in Section \ref{ovcgtcoeffs}. Then the restriction 
\[\rho_\lambda^2 : \h^r(\Gamma, \D_\lambda(\roi_L))^{U_{p,2}=\alpha} \isorightarrow \h^r(\Gamma,\D_\lambda^P(\roi_L))^{U_{p,2}=\alpha}\]
of $\rho_\lambda^2$ to the $\alpha$-eigenspaces of the $U_{p,2}$ operator is an isomorphism.
\end{mprop}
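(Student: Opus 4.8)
The plan is to deduce Proposition \ref{full} from Theorem \ref{liftingtheorem}, exactly as Proposition \ref{partial} was deduced, but now with the operator $\pi_2$ and the parabolic $P$ playing the roles that $\pi_1$ and $T$ played before. In the notation of \ref{setup} I would take $\Sigma$ to be the semigroup generated by $I,\pi_1,\pi_2$, the group $\Gamma$ our fixed congruence subgroup, $R=\roi_L$, the module $D=\D_\lambda(\roi_L)$, the distinguished element $\pi=\pi_2$, the submodule $D^\alpha = \D_\lambda^\alpha(\roi_L)$, and the filtration $\f^N D = \f^N\D_\lambda(\roi_L)$ defined just before the statement. First I record that the data of \ref{setup} are present: $\f^N\D_\lambda(\roi_L)$ is $\Sigma$-stable (asserted in the text, by the same minor-divisibility computation as Proposition \ref{sigmastable} intersected with the $\Sigma$-stable subspace $\ker(\pr_\lambda^2)$, using that $\pi_1,\pi_2$ scale monomials by non-negative powers of $p$); since an element of $\D_\lambda(\roi_L)$ is determined by its $\roi_L$-valued moments $\mu(x^ry^sz^t)$, the $\f^N\D_\lambda(\roi_L)$ have trivial intersection and $\D_\lambda(\roi_L)=\lim\limits_{\longleftarrow}\AAA^N\D_\lambda(\roi_L)$; $\D_\lambda^\alpha(\roi_L)$ is $\Sigma$-stable; and the $\roi_L$-torsion-freeness hypotheses hold in the case at hand, as noted in the Remark after Theorem \ref{liftingtheorem} (everything is an $\roi_L$-submodule of some $\Hom_\cts(-,\roi_L)$).

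Next I would identify the base of the filtration. At level $N=0$ the condition $\mu(x^ry^sz^t)\in\pi_L^{-t}\roi_L$ is vacuous, so $\f^0\D_\lambda(\roi_L)=\ker(\pr_\lambda^2)$, and since $\pr_\lambda^2:\D_\lambda(\roi_L)\to\D_\lambda^P(\roi_L)$ is surjective and $\Sigma$-equivariant this gives $\AAA^0\D_\lambda(\roi_L)\cong\D_\lambda^P(\roi_L)$ as $\Sigma$-modules, with $\rho^0$ identified with $\rho_\lambda^2$; hence $V^\alpha=\pr_\lambda^2(\D_\lambda^\alpha(\roi_L))$. Exactly as in the passage from Theorem \ref{liftingtheorem} to Proposition \ref{partial} (and as in \cite{PP09}), the canonical maps identify the $U_{p,2}=\alpha$ eigenspace of $\comp{\D_\lambda^\alpha(\roi_L)}$ with that of $\comp{\D_\lambda(\roi_L)}$, and likewise for $\comp{V^\alpha}$ and $\comp{\D_\lambda^P(\roi_L)}$, so it suffices to produce the isomorphism in the shape output by Theorem \ref{liftingtheorem} with $U_\pi=U_{p,2}$.

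It then remains to verify conditions (a) and (b) for $\pi=\pi_2$, which is where the slope bound $v_p(\alpha)<k_2+1$ is used. For (a): if $\mu\in\D_\lambda^\alpha(\roi_L)$ then by Lemma \ref{explicitaction}(ii) the action of $\pi_2$ sends $x^ry^sz^t$ to $p^{s+t}x^ry^sz^t$, so $(\mu|\pi_2)(x^ry^sz^t)=p^{s+t}\mu(x^ry^sz^t)\in p^{s+t}\alpha p^{-t}\roi_L\subseteq\alpha\roi_L$, i.e.\ $\mu|\pi_2\in\alpha\D_\lambda(\roi_L)$. For (b): the crucial input is the $\pr_\lambda^2$-version of Proposition \ref{kernelcond} --- since $\pr_\lambda^2$ is dual to the inclusion $\A_\lambda^P(\roi_L)\hookrightarrow\A_\lambda(\roi_L)$ whose image is the span of the $x^ry^sz^t$ with $t\leq k_2$, one has $\mu\in\ker(\pr_\lambda^2)$ iff $\mu(x^ry^sz^t)=0$ for all $t\leq k_2$ (immediate here, unlike its counterpart for $\pr_\lambda^1$, which needed the highest-weight-vector computation). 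Thus for $\mu\in\f^N\D_\lambda(\roi_L)\subseteq\ker(\pr_\lambda^2)$, $(\mu|\pi_2)(x^ry^sz^t)=p^{s+t}\mu(x^ry^sz^t)$ vanishes when $t\leq k_2$ and, when $t\geq k_2+1$, lies in $p^{k_2+1}\pi_L^{N-t}\roi_L$; as in the proof of Lemma \ref{partb}, $v_p(\alpha)<k_2+1$ forces $p^{k_2+1}\in\alpha\pi_L\roi_L$, so $(\mu|\pi_2)(x^ry^sz^t)\in\alpha\pi_L^{N+1-t}\roi_L$ for all $r,s,t$; since $\ker(\pr_\lambda^2)$ is $\Sigma$-stable this is exactly $\mu|\pi_2\in\alpha\f^{N+1}\D_\lambda(\roi_L)$. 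Theorem \ref{liftingtheorem}, applied for each $r$, then yields the proposition.

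The step I expect to require the most care --- though it is not conceptually hard --- is the compressed claim that $\f^N\D_\lambda(\roi_L)$ is $\Sigma$-stable: a careful write-up must re-run the minor-by-minor $p$-divisibility count of Proposition \ref{sigmastable}, now with the degree $t$ in $z$ playing the part that the combined degree $r+s$ in $x,y$ played there, and then intersect with $\ker(\pr_\lambda^2)$; alongside this one must check that the eigenspace identification of the second paragraph is legitimate. Both are routine but exponent-sensitive, and everything downstream of them is immediate from Theorem \ref{liftingtheorem}.
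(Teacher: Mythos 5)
Your proposal is correct and follows essentially the same route as the paper: the paper's own (very compressed) argument consists precisely of the filtration $\f^N\D_\lambda(\roi_L)$ and submodule $\D_\lambda^\alpha(\roi_L)$ defined just before the statement, the observation that $\ker(\pr_\lambda^2)$ consists of distributions killing all monomials with $t\leq k_2$ so that $\pi_2$ gains a factor $p^{k_2+1}$ on the filtration, and an appeal to Theorem \ref{liftingtheorem} with $\pi=\pi_2$, exactly as you verify. Your filled-in details (the $\Sigma$-stability count with $t$ replacing $r+s$, conditions (a) and (b), and the identification $\AAA^0\D_\lambda(\roi_L)\cong\D_\lambda^P(\roi_L)$) are accurate and match, indeed slightly exceed, the level of detail in the paper's proof.
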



\subsection{Summary of results}
We can combine the results of Propositions \ref{partial} and \ref{full} to obtain the following constructive non-critical slope control theorem for $\SLthree$.

\begin{mthm}\label{controltheorem}Consider the set-up of Notation \ref{fullnot} in the Introduction. In particular, let $\lambda = (k_1,k_2,0)$ be a dominant algebraic weight, and let $\alpha_1,\alpha_2 \in \roi_L$ with $v_p(\alpha_1)<k_1-k_2+1$ and $v_p(\alpha_2) < k_2+1.$ Then the restriction
\[\rho_\lambda :\h^r(\Gamma,\D_\lambda(\roi_L))^{U_{p,i}=\alpha_i} \rightarrow \h^r(\Gamma,L_\lambda(\roi_L))^{U_{p,i}=\alpha_i}\]
of the specialisation map to the simultaneous $\alpha_i$-eigenspaces of the $U_{p,i}$ operators, for $i = 1,2$, is an isomorphism.
\end{mthm}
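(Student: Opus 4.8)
The plan is to deduce Theorem~\ref{controltheorem} by composing the two control theorems already established: Proposition~\ref{partial} (lifting from classical coefficients $L_\lambda(\roi_L)$ to partially overconvergent coefficients $\D_\lambda^P(\roi_L)$ along $\rho_\lambda^1$, controlled by $U_{p,1}$) and Proposition~\ref{full} (lifting from partially to fully overconvergent coefficients $\D_\lambda(\roi_L)$ along $\rho_\lambda^2$, controlled by $U_{p,2}$). Since $\rho_\lambda = \rho_\lambda^1 \circ \rho_\lambda^2$ by construction, and each factor is an isomorphism on the appropriate eigenspace, the composite should be an isomorphism on the simultaneous eigenspace, provided the eigenspace decompositions are compatible.

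First I would observe that the operators $U_{p,1}$ and $U_{p,2}$ commute, since $\pi_1$ and $\pi_2$ commute as diagonal matrices, and hence the induced Hecke operators on cohomology commute (this follows from the general formalism of the $U$-operator at the level of cochains recalled after Notation~\ref{setup}). Thus it makes sense to speak of the simultaneous $(\alpha_1,\alpha_2)$-eigenspace $\h^r(\Gamma,D)^{U_{p,1}=\alpha_1,\,U_{p,2}=\alpha_2}$ for each of the three coefficient modules $D \in \{\D_\lambda(\roi_L),\ \D_\lambda^P(\roi_L),\ L_\lambda(\roi_L)\}$. Next I would note that all the maps $\pr_\lambda^1$, $\pr_\lambda^2$ are $\Sigma$-equivariant, hence the induced maps $\rho_\lambda^1,\rho_\lambda^2$ on cohomology commute with both $U_{p,1}$ and $U_{p,2}$; therefore they restrict to maps on the simultaneous eigenspaces.

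The key step is then to upgrade the single-operator statements to the simultaneous one. For $\rho_\lambda^2$: Proposition~\ref{full} gives an isomorphism $\h^r(\Gamma,\D_\lambda(\roi_L))^{U_{p,2}=\alpha_2} \isorightarrow \h^r(\Gamma,\D_\lambda^P(\roi_L))^{U_{p,2}=\alpha_2}$; since this isomorphism is equivariant for $U_{p,1}$ (which commutes with $U_{p,2}$ and with $\rho_\lambda^2$), it restricts to an isomorphism on the further $U_{p,1}=\alpha_1$ eigenspaces, i.e.\ on the simultaneous eigenspaces. Similarly Proposition~\ref{partial} gives an isomorphism $\h^r(\Gamma,\D_\lambda^P(\roi_L))^{U_{p,1}=\alpha_1} \isorightarrow \h^r(\Gamma,L_\lambda(\roi_L))^{U_{p,1}=\alpha_1}$ which, being $U_{p,2}$-equivariant, restricts to an isomorphism on the simultaneous eigenspaces. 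Composing, $\rho_\lambda = \rho_\lambda^1\circ\rho_\lambda^2$ restricts to an isomorphism
\[
\h^r(\Gamma,\D_\lambda(\roi_L))^{U_{p,1}=\alpha_1,\,U_{p,2}=\alpha_2} \isorightarrow \h^r(\Gamma,L_\lambda(\roi_L))^{U_{p,1}=\alpha_1,\,U_{p,2}=\alpha_2},
\]
which is exactly the claim, using the slope hypotheses $v_p(\alpha_1)<k_1-k_2+1$ and $v_p(\alpha_2)<k_2+1$ to invoke Propositions~\ref{partial} and \ref{full} respectively.

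The main obstacle I anticipate is purely bookkeeping: making sure that "restricting an isomorphism of $U_{p,2}$-eigenspaces to the $U_{p,1}=\alpha_1$ part" is legitimate, i.e.\ that taking generalised eigenspaces (or honest eigenspaces) is an exact operation here and that the eigenspace of the composite equals the intersection of the two eigenspaces. Since all the modules involved have trivial $R$-torsion by hypothesis and $U_{p,1}$ acts $\roi_L$-linearly and commutes with everything in sight, an isomorphism intertwining the $U_{p,1}$-actions on two modules carries the $\alpha_1$-eigenspace of one isomorphically onto that of the other — this is formal. One should also double-check that the $U_{p,1}$-action used in Proposition~\ref{partial} (induced by $\pi_1$ acting on the partially overconvergent side) is compatible, under $\rho_\lambda^2$, with the $U_{p,1}$-action on the fully overconvergent side, but this is immediate from $\Sigma$-equivariance of $\pr_\lambda^2$. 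No new estimates are needed beyond those already in Propositions~\ref{partial} and \ref{full}.
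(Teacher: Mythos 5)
Your proposal is correct and follows essentially the same route as the paper: the published proof likewise deduces the theorem by composing Propositions \ref{partial} and \ref{full}, using the $\Sigma$-equivariance of $\rho_\lambda^1$ and $\rho_\lambda^2$ (and the commutativity of $U_{p,1}$ and $U_{p,2}$) to ensure that lifts of simultaneous eigensymbols remain simultaneous eigensymbols. Your write-up simply makes the eigenspace bookkeeping more explicit than the paper's two-line argument.
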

\begin{proof}
This is an immediate consequence of the two propositions. Indeed, both $\rho_\lambda^1$ and $\rho_\lambda^2$ are $\Sigma$-equivariant, so that a partial lift of a simultaneous $U_{p,1}$ and $U_{p,2}$ eigensymbol will likewise be a simultaneous eigensymbol, that can hence be lifted further to fully overconvergent coefficients.
\end{proof}

\small
\renewcommand{\refname}{\normalsize References} 
\bibliography{references}{}
\bibliographystyle{alpha}
\end{document}